\newtheorem{theorem}{Theorem}[section]
\newtheorem{lemma}[theorem]{Lemma}
\newtheorem{proposition}[theorem]{Proposition}
\newtheorem{corollary}[theorem]{Corollary}
\theoremstyle{remark}
\newtheorem{remark}[theorem]{Remark}
\newtheorem{example}[theorem]{Example}
\newtheorem{definition}[theorem]{Definition}
\newtheorem{assumption}[theorem]{Assumption}
\def\be{\begin{eqnarray}}
\def\ee{\end{eqnarray}}
\def\b*{\begin{eqnarray*}}
\def\e*{\end{eqnarray*}}
\newcommand{\rmi}{{\rm (i)$\>\>$}}
\newcommand{\rmii}{{\rm (ii)$\>\>$}}
\newcommand{\rmiii}{{\rm (iii)$\>\>$}}
\def \E{\mathbb{E}}
\def \F{\mathbb{F}}
\def \L{\mathbb{L}}
\def \P{\mathbb{P}}
\def \R{\mathbb{R}}
\def \Z{\mathbb{Z}}
\def\Bc{{\cal B}}
\def\Cc{{\cal C}}
\def\Dc{{\cal D}}
\def\Ec{{\cal E}}
\def\Fc{{\cal F}}
\def\Hc{{\cal H}}
\def\Lc{{\cal L}}
\def\Mc{{\cal M}}
\def\Nc{{\cal N}}
\def\Pc{{\cal P}}
\def\Qc{{\cal Q}}
\def\Wc{{\cal W}}
\def\Xc{{\cal X}}
\def\Yc{{\cal Y}}
\def \eps {\varepsilon}
\def \m {\mu}
\def \n {\nu}
\def \vp {\varphi}
\def \ps {\psi}
\def\q{\quad}
\def \Lam {\Lambda}
\def \Om {\Omega}
\def \sb {\subseteq}
\def \pq {\preceq}
\def \lro {\longrightarrow}
\def \pp {\mathsf{P}}
\def \dd {\mathsf{D}}
\def \no{\noindent}
\def \supp{{\rm{supp}}}
\def \bm{\boldsymbol{\mu}}
\def \bn{\boldsymbol{\nu}}
\def \bS{\mathsf{S}}
\def \bx{\mathsf{x}}
\def \by{\mathsf{y}}
\def \bz{\mathsf{z}}
\def \bq{\mathsf{q}}
\def \bB{\mathsf{B}}
\def \li{\rm Lip}
\def \x{\times}
\def \ox{\otimes}
\def \op{\oplus}
\def \1{\mathbf 1}
\def \mn{{\mu}^{(n)}}
\def \tmn{\tilde{\mu}^{(n)}_m}
\def \tn{\tilde{\mu}^{(n)}}
\def \tm{\tilde{\nu}^{(n)}}
\def \tmkn{\tilde{\mu}^{(n)}_{k,m}}
\def \hm{\hat{\mu}^{n}}
\def \hn{\hat{\nu}^{n}}
\def \hnm{\hat{n}_{m}}
\def \hbm{\hat{\bm}^{n}}
\def \0{\mathsf 0}
\def \1{\mathsf 1}
\begin{document}

\begin{frontmatter}
\title{Computational Methods for Martingale Optimal\\ Transport Problems\thanksref{T1}}
\runtitle{Computational Methods for MOT Problems}
\thankstext{T1}{This research is supported by the European Research Council under the European Union's Seventh Framework Programme (FP7/2007-2013) / ERC grant agreement no. 335421. The authors thank Guillaume Carlier, Bruno L\'evy, Tongseok Lim, Terry Lyons and Pietro Siorpaes for insightful discussions and comments.}

\begin{aug}
\author{\fnms{Gaoyue} \snm{Guo}\ead[label=e1]{gaoyue.guo@maths.ox.ac.uk}\ead[label=u1,url]{http://www.maths.ox.ac.uk/people/gaoyue.guo}}
\and
\author{\fnms{Jan} \snm{Ob{\l}{\'o}j}\thanksref{t2}\ead[label=e2]{jan.obloj@maths.ox.ac.uk}\ead[label=u2,url]{http://www.maths.ox.ac.uk/people/jan.obloj}}

\thankstext{t2}{The second author also gratefully acknowledges support from St John's College, Oxford.}
\runauthor{Computational Methods for MOT Problems}

\affiliation{University of Oxford, United Kingdom}


\address{Mathematical Institute \\
University of Oxford \\
AWB, ROQ, Woodstock Road \\
Oxford OX2 6GG \\
United Kingdom\\
\printead{e1}\\
\phantom{E-mail:\ }\printead*{e2}}
\end{aug}

\maketitle 

\begin{abstract}
We develop computational methods for solving the {\em martingale optimal transport} (\textsf{MOT}) problem --- a version of the classical optimal transport with an additional martingale constraint on the transport's dynamics. We prove that a general, multi-step multi-dimensional, \textsf{MOT} problem can be approximated through a sequence of {\em linear programming} (\textsf{LP}) problems which result from a discretization of the marginal distributions combined with an appropriate relaxation of the martingale condition. We further furnish two generic approaches for discretizing probability distributions,  suitable respectively for the cases when we can compute integrals against these distributions or when we can sample from them. These render our main result applicable and lead to an implementable numerical scheme for solving \textsf{MOT} problems. Finally, specializing to the one-step model on real line, we provide an estimate of the convergence rate which, to the best of our knowledge, is the first of its kind in the literature.
\end{abstract}

\begin{keyword}[class=MSC]
\kwd[Primary ]{49M25}
\kwd{60H99}
\kwd[; secondary ]{90C08.}
\end{keyword}

\begin{keyword}
\kwd{martingale optimal transport}
\kwd{martingale relaxation}
\kwd{robust hedging}
\kwd{duality}
\kwd{discretization of measure}
\kwd{linear programming}
\end{keyword}

\end{frontmatter}

\section{Introduction}\label{sec:intro}

The {\em optimal transport}  (\textsf{OT}) problem is concerned with transferring mass from one location to another in such a way as to optimize a given criterion. 
Rephrased mathematically, and for simplicity considering the one-dimensional case, we are given two probability distributions $\m$ and $\n$ on $\R$ and seek to minimize 
\be\label{intergral}
\int_{\R^2} c(x,y) \P(dx,dy),
\ee 
among all probability measures $\P$, also known as {\em transport plans}, such that 
\be
\label{def:ot_plan}
\P\big[E\x \R\big]~=~\m[E] & \mbox{and} & \P\big[\R\x E\big]~=~\n[E],~ \mbox{ for all } E\in\Bc(\R),
\ee
where $c:\R^2\to\R$ is a measurable cost function. Theoretical advances in the last fifty years  characterize existence, uniqueness, representation and smoothness properties of optimizers in a variety of different settings, see e.g. \cite{RR, Villani}, and applications are abundant throughout most of the applied sciences  including biomedical sciences, geography and data science. Accordingly, numerical techniques for the \textsf{OT} are of great importance and have rapidly developed into an important and separate field of applied mathematics:

\begin{enumerate}
\item In the absolutely continuous case, i.e. $\m(dx)=\rho(x)dx$ and $\n(dy)=\sigma(y)dy$, Benamou and Brenier proposed in \cite{BB} a numerical scheme for the quadratic distance function $c(x,y)=(x-y)^2$  using an equivalent formulation arising from fluid mechanics.
\item In the purely discrete case, i.e. $\m(dx)=\sum_{i=1}^m\alpha_i\delta_{x_i}(dx)$ and $\n(dy)=\sum_{j=1}^n \beta_j\delta_{y_j}(dy)$,  the \textsf{OT} problem reduces to a {\em linear programming} (\textsf{LP}) problem  and can be computed using the iterative Bregman projection, see Benamou et al. \cite{BCCNP}.
\item In the semi-discrete case, i.e.  $\m(dx)=\rho(x)dx$ and $\n(dy)=\sum_{j=1}^n \beta_j\delta_{y_j}(dy)$, L\'evy \cite{Levy} adopted a computational geometry approach to the cost $c(x,y)=(x-y)^2$ and solved the \textsf{OT} problem by means of Laguerre's  tessellations.
\end{enumerate}

Recently, an additional constraint has been taken into account, which leads to the so-called {\em martingale optimal transport} (\textsf{MOT}) problem. This optimization problem was motivated by, and contributed to, the so-called model-independent, or robust, pricing of exotic options in mathematical finance, perspective which has gained significant momentum in the wake of financial crisis. More precisely, the two given measures $\m$ and $\n$  describe the initial and final distributions of stock prices and can be recovered from market prices of traded call/put  options. Calibrated market models are thus identified by martingales with these prescribed marginals, i.e. transport plans $\P$ which further satisfy 
\be\label{def:martingale}
\int_{\R} y \P_{x}(dy) ~=~ x,~  \mbox{ for } \m\mbox{ - a.e. } x\in\R,
\ee
where $(\P_x)_{x\in\R}$ denotes the disintegration of $\P$ with respect to $\m$. The \textsf{MOT} problem aims at  maximizing\footnote{The maximization formulation is more adapted to financial applications. We refer to $c$ as a reward function or payoff, which is commonly accepted in the finance jargon.} the integral \eqref{intergral} overall $\P$, still named transport plans, satisfying  the constraints \eqref{def:ot_plan} and  \eqref{def:martingale}, and it corresponds to the model-independent price for option $c$. This methodology was presented by Beiglb\"ock et al. \cite{BHLP}, to which we refer for a more detailed discussion. It is also worth mentioning that some concrete \textsf{MOT} problems for particular payoffs have been investigated, by means of stochastic control or Skorokhod embedding techniques, in a stream of papers going back to Hobson \cite{Hobson}, see e.g. \cite{BT, CGHL, GHLT, BHR, CO2, CO1, HK2, HN, Guo}. 

Given the active theoretical interest in \textsf{MOT}  problems, as well as their importance for applications in mathematical finance, it becomes increasingly important to develop numerical techniques and computational methods for these problems. A natural starting point is given by a simple, but important, observation that, for the purely discrete case stated above, the \textsf{MOT} problem is equivalent to the following \textsf{LP} problem:
\begin{align*}
\max_{(p_{i,j})_{1\le i\le m,1\le j\le n}\in\R_+^{mn}}~ \sum_{i=1}^m \sum_{j=1}^n  p_{i,j}c(x_i, y_j) 
 \q\mbox{s.t.}\q & \sum_{j=1}^n p_{i,j} ~=~ \alpha_i,~  \mbox{ for } i=1,\ldots, m,\\
& \sum_{i=1}^m p_{i,j} ~=~\beta_j,~ \mbox{ for } j=1,\ldots, n,\\
  & \sum_{j=1}^n p_{i,j}y_j~=~ \alpha_ix_i,~ \mbox{ for } i=1,\ldots, m.
\end{align*}
Such \textsf{LP} formulation was pioneered in Davis et al. \cite{DOR}, where instead of the marginal constraint $\nu$, only a finite number of expectation constraints are given. This, for a convex reward function, leads to optimizers with finite support. To adapt this approach in general, we could hope to approximate the \textsf{MOT} problem for $(\m,\n)$ with the above \textsf{LP} problem for finitely supported $(\m^n,\n^n)$ which are `close' to $(\m,\n)$. Unfortunately, this naive idea hits two important obstacles. First, there are no general continuity results of the \textsf{MOT} problem as a function of its input $(\m,\n)$. To the best of our knowledge, the only exception is Juillet \cite{Juillet} who proved that, if $c(x,y)=\vp(x)\ps(y)$ or $c(x,y)=h(x-y)$, where $\vp$, $\ps$, $h:\R\to\R$ are assumed to satisfy the conditions of Remark 2.10 in \cite{Juillet}, then there exists an optimizer $\P^*(\m,\n)$ which is Lipschitz with respect to $(\m,\n)$ under a topology of Wasserstein type. We extend his result to more general payoffs $c$ in Proposition \ref{prop:conti} but it remains a one-dimensional result. Second, even if $(\m,\n)$ admits a martingale transport plan, in dimensions $d>1$ it may be very difficult to construct a discrete approximation $(\m^n,\n^n)$ which also does so, see Remark \ref{rk:discretization_convexorder} below.
In fact, the martingale condition, which seems harmless, renders any of the usual \textsf{OT} techniques unusable, e.g. stability results, tools from PDE and computational geometry. 
To the best of our knowledge, and in contrast to the \textsf{OT}, numerical methods for \textsf{MOT} problems are close to non-existent so far, relative to the theory and applications.  

This paper fills in this important gap. We provide an approximation  approach for solving systematically  $N-$period  \textsf{MOT} problems on $\R^d$, with $N\ge 2$ and $d\ge 1$. Our approximation of the original problem relies on a  discretization of the marginal distributions coupled with a suitable relaxation of the martingale constraint leading to a sequence of \textsf{LP}  problems. This sequence converges and, specializing to  $N=2$ and $d=1$, we obtain the convergence speed. Our investigation involves a number of novel results and techniques which, we believe, are of independent interest. In particular, we compute explicitly the constants in \cite{FG} for the convergence rate of empirical measures to the limit in Glivenko-Cantelli's theorem.

The paper is organized as follows. 
In the rest of this introduction, we clarify the framework and notations  under which we work. Section \ref{sec:result} contains all the main theoretical results: we introduce the {\em relaxed  martingale optimal transport} (relaxed \textsf{MOT}), show the convergence of approximating \textsf{LP}  problems to the \textsf{MOT} problem and provide a bound on the convergence rate in dimension one. In Section \ref{sec:num_pb}, we consider possible implementations of our method. This requires approximating a probability measure $\mu$ by  discrete measures $\mu^n$ and being able to compute, or bound, the Wasserstein distance between $\mu^n$ and $\mu$. We develop two generic approaches to achieve this, and then present several numerical examples which illustrate our methods and provide heuristic insights into the structure of optimizers, including a conjecture in \cite{LGK}. Section \ref{sec:proof} contains all the related proofs. Section \ref{sec:extension} concludes the paper and points to potential future work.

\subsection{Preliminaries}

For a given set $E$, we denote by $E^k$ its $k-$fold product. If $E$ is Polish, then $\Bc(E)$ denotes its Borel $\sigma-$field and $\Pc(E)$ is the set of probability measures on $\big(E, \Bc(E)\big)$ which admit a finite first moment.
As is common when studying the \textsf{OT},  we formulate our problem on the canonical space, which plays an important role in the analysis.  Let $\Om:=\R^d$ with its elements denoted by $\bx=(x_1,\ldots, x_d)$ and $\Pc:= \Pc(\Om)$. Throughout, we endow $\R^d$ with the $\ell_1$ norm   $|\cdot|$, i.e.  $|\bx| :=\sum_{i=1}^d |x_i|$.
Define $\Lam$ to be the space of Lipschitz functions on $\R^d$ and, given $f\in\Lam$, denote by ${\li}(f)$ its Lipschitz constant on $\R^d$. For each $L>0$, let $\Lam_L\subset\Lam$ be the subspace of functions $f$ with ${\li}(f)\le L$. We consider the coordinate process $(\bS_k)_{1\le k\le N}$, i.e. $\bS_k(\bx_1,\ldots, \bx_N):=\bx_k$ for all $(\bx_1,\ldots, \bx_N)\in\Om^N$, and its natural filtration  $(\Fc_k)_{1\le k\le N}$, i.e. $\Fc_k:=\sigma(\bS_1,\ldots, \bS_k)$. From the financial point of view, $\Om^N$ models the collection of all possible trajectories for the price evolution of $d$ stocks, where $N$ is the number of trading dates. 

Given a vector of probability measures $\bm=(\m_k)_{1\le k\le N}\in\Pc^N$, define the set of transport plans with the marginal distributions $\m_1, \ldots, \m_N$ by
\b*
\Pc(\bm) ~:=~ \Big\{\P\in\Pc\big(\Om^N\big):\q   \P \circ \bS_k^{-1} ~=~ \m_k,~ \mbox{ for } k=1,\ldots, N\Big\},
\e* 
where $\P\circ \bS_k^{-1}$ denotes the  push forward of $\P$ via the map $\bS_k: \Om^N\to\Om$. In particular, the Wasserstein distance (of order $1$) between $\m$ and $\n\in\Pc$ is given by 
\be \label{def:wass_dual}
~~~~~~~~~ \Wc(\mu,\nu) ~~:=~~ \inf_{\P\in\Pc(\m,\n)}~   \E_{\P}\big[\big|\bS_1-\bS_2\big|\big] ~~ = ~~ \sup_{f\in\Lam_1}~  \left\{\int_{\R^d} f(\bx)\m(d\bx) - \int_{\R^d} f(\bx)\n(d\bx) \right\},
\ee 
where the second equality follows by Kantorovich's duality. We recall that $\Pc$, equipped with the metric $\Wc$, is a Polish space. Further, for any $(\mu^n)_{n\ge 1}\subset\Pc$ and $\mu\in\Pc$, $\Wc(\mu^n, \mu)\to 0$ holds if and only if 
\b*
\mu^n~ \stackrel{\Lc}{\lro}~ \mu & \mbox{and} & 
 \int_{\R^d} |\bx|\mu^n(d\bx) ~\longrightarrow~ \int_{\R^d} |\bx|\mu(d\bx),
\e*
where $\Lc$ represents the weak convergence of probability measures, see the monograph of Rachev and R\"uschendorf \cite{RR} for more details. To facilitate  our analysis in the sequel, we endow $\Pc^N$ with the product metric $\Wc^{\oplus}$ defined by  $
\Wc^{\oplus}(\bm,\bn) := \sum_{k=1}^N  \Wc (\mu_k,\n_k)$, for all $\bm$, $\bn \in \Pc^N$.
It follows that $\Pc^N$ is Polish with respect to $\Wc^{\oplus}$. We close this introduction by listing some notations used in the following.

\vspace{3mm}

\no {\bf Notations.} 

\begin{itemize}
\item $\0:=(0,\ldots,0)$, $\1:=(1,\ldots, 1)\in \R^d$, and to stress the unidimensional case, we write $\bx\equiv x$ and $\bS_k\equiv S_k$ for $d=1$, see e.g. Section \ref{ssec:example}. 

\vspace{1mm}

\item $\L^0(\Om^k;\R^d)$ is the set of measurable functions from $\Om^k$ to $\R^d$. Denote by $\L^{\infty}(\Om^k;\R^d)\subset \L^0(\Om^k;\R^d)$ the subset of (uniformly) bounded functions, and by $\Cc_{b}(\Om^k;\R^d)\subset \L^{\infty}(\Om^k;\R^d)$ the subset of continuous bounded functions.

\vspace{1mm}

\item For simplicity purposes, we adopt the  abbreviations below whenever the context is clear:
\b*
 \int f d\m \equiv \int_{\R^d} f(\bx) \m(d\bx),\q (p_{i_1,\ldots, i_N})\equiv (p_{i_1,\ldots, i_N})_{i_1\in I_1,\ldots, i_N\in I_N},\q \sum_{i_1,\ldots, i_N}\equiv \sum_{i_1\in I_1,\ldots, i_N\in I_N}. 
\e*
\end{itemize}

\section{Main results}\label{sec:result}

Our computational method relies on the convergence result stated in Theorem \ref{thm:general}. To introduce the result, we need the notion of {\em$\eps-$approximating martingale measure}.

\begin{definition}\label{def:rmm}
For any $\eps\ge 0$, a probability measure $\P\in\Pc\big(\Om^N\big)$ is said to be an $\eps-$approximating martingale measure if for each  $k=1,\ldots, N-1$
\be\label{def:rtp2}
\E_{\P}\Big[\Big|\E_{\P}\big[\bS_{k+1}\big|\Fc_{k}\big]~-~\bS_k\Big|\Big] ~\le ~ \eps,
\ee
or equivalently, in view of the monotone class theorem,   
\be\label{def:rtp}
\E_{\P}\big[h(\bS_{1},\ldots, \bS_{k})\cdot (\bS_{k+1}-\bS_{k})\big] ~\le ~ \eps\|h\|_{\infty},~ \mbox{ for all } h\in\Cc_b(\Om^k;\R^d),
\ee
where $\|h\|_{\infty}:=\max\big(\|h^{(1)}\|_{\infty},\ldots, \|h^{(d)}\|_{\infty}\big)$ and $\|h^{(i)}\|_{\infty}:=\sup_{(\bx_1,\ldots, \bx_k)\in\Om^k} \big|h^{(i)}(\bx_1,\ldots, \bx_k)\big|$ for $i=1,\ldots, d$.
\end{definition}

Given $\eps\ge 0$,  let $\Mc_{\eps}(\bm)\subset\Pc(\bm)$ be the subset containing all $\eps-$approximating martingale measures. Then $\Mc_{\eps}(\bm)$ is convex and closed with respect to the weak topology by \eqref{def:rtp}, and thus compact. For a measurable function $c:\Om^N\to\R$, the relaxed \textsf{MOT} problem is  defined by 
\be\label{def:rmotp}
\pp_{\eps}(\bm) ~:=~ \sup_{\P\in \Mc_{\eps}(\bm)}~ \E_{\P}\big[ c(\bS_1,\ldots, \bS_N)\big],
\ee
where we set by convention $\pp_{\eps}(\bm) :=-\infty$ whenever $\Mc_{\eps}(\bm)=\emptyset$. Denote further by $\Pc^{\pq}_{\eps}\subset\Pc^N$ the collection of measures $\bm$ such that $\Mc_{\eps}(\bm)\neq\emptyset$. We note that every $\P\in \Mc_{0}(\bm)$ is a martingale measure, i.e. $(\bS_k)_{1\le k\le N}$ is a martingale under $\P$, and $\pp_0(\bm)$ is the \textsf{MOT} problem. In the rest of the paper, for simplicity, we drop the subscript $\eps$ when $\eps=0$, e.g. $\Pc^{\pq} \equiv \Pc_{0}^{\pq}$, $\Mc(\bm)\equiv \Mc_{0}(\bm)$, $\pp(\bm) \equiv \pp_{0}(\bm)$, etc. 

As previously mentioned, $\pp(\bm)$ reduces to an \textsf{LP} problem once the marginals $\m_k$ have finite support for $k=1,\ldots, N$. We now couple this observation with a suitable relaxation of the martingale constraint to obtain a unified framework for computing $\pp(\bm)$ numerically. 

\begin{theorem}\label{thm:general}
Fix $\bm\in\Pc^{\pq}$. Let $(\bm^n)_{n\ge 1}\subset\Pc^N$ be a sequence converging to $\bm$ under $\Wc^{\oplus}$. Then, for all $n\geq 1$, $\bm^n\in \Pc^{\pq}_{r_n}$ with $r_n:=\Wc^{\op}(\bm^n,\bm)$. Assume further $c$ is Lipschitz. 

\vspace{1mm}

\no\rmi  For any sequence $(\eps_n)_{n\ge 1}$  converging to zero such that $\eps_n\ge r_n$ for all $n\ge 1$, one has
\b*
\lim_{n\to\infty} \pp_{\eps_n}(\bm^n) ~=~ \pp(\bm).
\e*
\rmii  For each $n\ge 1$, $\pp_{\eps_n}(\bm^n)$ admits an optimizer $\P_n\in\Mc_{\eps_n}(\bm^n)$, i.e. $\pp_{\eps_n}(\bm^n)=\E_{\P_n}[c]$. The sequence $(\P_n)_{n\ge 1}$ is tight and every limit point must be an optimizer for $\pp(\bm)$. In particular,  $(\P_n)_{n\ge 1}$ converges weakly whenever $\pp(\bm)$ has a unique optimizer.
\end{theorem}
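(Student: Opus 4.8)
The plan is to prove the three assertions in order, using compactness and the dual characterization \eqref{def:rtp} of approximating martingale measures throughout. First, for the preliminary claim that $\bm^n\in\Pc^{\pq}_{r_n}$: I would take any $\P\in\Mc(\bm)$ (nonempty since $\bm\in\Pc^{\pq}$) and build a coupling on $\Om^{2N}$ whose first $N$ coordinates follow $\P$ and whose $k$-th pair is an optimal Wasserstein coupling of $\m_k$ and $\m^n_k$. Pushing forward to the last $N$ coordinates gives a plan $\P^n\in\Pc(\bm^n)$ with $\E[|\bS_k - \bar\bS_k|]=\Wc(\m_k,\m^n_k)$ in the obvious notation. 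Then for $h\in\Cc_b(\Om^k;\R^d)$ with $\|h\|_\infty\le 1$, I estimate $\E_{\P^n}[h(\bar\bS_1,\dots,\bar\bS_k)\cdot(\bar\bS_{k+1}-\bar\bS_k)]$ by comparing termwise with the corresponding expression under $\P$, which vanishes since $\P$ is a true martingale measure; the error is controlled by $\Wc(\m_k,\m^n_k)+\Wc(\m_{k+1},\m^n_{k+1})\le r_n$ using $|h|\le 1$ (this needs a small argument to handle that $h$ is evaluated at different points — one uses $\|h\|_\infty\le1$ to bound the term where the argument changes, and Lipschitz-in-$c$ is not needed here). Hence $\P^n\in\Mc_{r_n}(\bm^n)$.

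For part (i), the lower bound $\liminf_n \pp_{\eps_n}(\bm^n)\ge \pp(\bm)$ follows from the construction above: if $\P\in\Mc(\bm)$ is near-optimal for $\pp(\bm)$, the associated $\P^n\in\Mc_{r_n}(\bm^n)\subseteq\Mc_{\eps_n}(\bm^n)$ satisfies $\E_{\P^n}[c]\to\E_\P[c]$ because $c$ is Lipschitz and $(\bar\bS_1,\dots,\bar\bS_N)\to(\bS_1,\dots,\bS_N)$ in $L^1$ along the coupling. For the upper bound, take optimizers $\P_n\in\Mc_{\eps_n}(\bm^n)$ (whose existence I establish as in part (ii)); since $\bm^n\to\bm$ under $\Wc^\oplus$, the marginals $\{\m^n_k\}_n$ are tight with uniformly integrable first moments, so $(\P_n)_n$ is tight. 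Extract a weakly convergent subsequence $\P_{n_j}\to\P_\infty$. The marginal constraints pass to the limit, giving $\P_\infty\in\Pc(\bm)$, and the approximate-martingale inequality \eqref{def:rtp} passes to the limit — for fixed $h\in\Cc_b$ the functional $\P\mapsto\E_\P[h(\bS_1,\dots,\bS_k)\cdot(\bS_{k+1}-\bS_k)]$ is continuous under weak convergence provided one also has first-moment convergence, which holds since $\Wc^\oplus$-convergence of marginals forces $\int|\bx|\,d\m^n_k\to\int|\bx|\,d\m_k$ — so with $\eps_{n_j}\to0$ we get $\P_\infty\in\Mc(\bm)$. Finally $\E_{\P_{n_j}}[c]\to\E_{\P_\infty}[c]$ again by Lipschitz-ness of $c$ combined with first-moment convergence of the marginals, yielding $\lim_j \pp_{\eps_{n_j}}(\bm^{n_j})=\E_{\P_\infty}[c]\le\pp(\bm)$. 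Since every subsequence has a further subsequence with this property and the limit is bounded above by $\pp(\bm)$, combined with the $\liminf$ bound the full limit equals $\pp(\bm)$.

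For part (ii), existence of $\P_n$ is immediate: $\Mc_{\eps_n}(\bm^n)$ is nonempty (by the preliminary claim, since $\eps_n\ge r_n$), convex, and weakly compact (closed by \eqref{def:rtp}, tight because the marginals are fixed), while $\P\mapsto\E_\P[c]$ is weakly continuous on this set — here one uses that all $\P\in\Mc_{\eps_n}(\bm^n)$ share the same marginals $\bm^n$, so uniform integrability of $c(\bS_1,\dots,\bS_N)$ is automatic from its Lipschitz growth and the fixed marginals — hence the supremum is attained. Tightness of $(\P_n)_n$ and the fact that every limit point optimizes $\pp(\bm)$ were just shown in the course of part (i): any limit point $\P_\infty$ lies in $\Mc(\bm)$ and satisfies $\E_{\P_\infty}[c]=\lim_j\pp_{\eps_{n_j}}(\bm^{n_j})=\pp(\bm)$ by (i). The last sentence is then a standard fact: a tight sequence all of whose limit points equal the unique maximizer must converge to it.

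I expect the main obstacle to be the careful handling of the two passages to the limit in the inequality \eqref{def:rtp} and in $\E_{\P_n}[c]$: weak convergence alone does not preserve expectations of unbounded functions, so one must exploit that $\Wc^\oplus$-convergence of the marginals $\bm^n\to\bm$ yields not just weak convergence but also convergence of first moments, which upgrades weak convergence of the $\P_n$ (whose marginals are the $\bm^n$) to convergence of integrals of any function with at-most-linear growth — in particular Lipschitz $c$ and the bilinear expressions $h\cdot(\bS_{k+1}-\bS_k)$. A secondary technical point is the coupling argument for the preliminary claim, where $h$ is evaluated at the perturbed points $\bar\bS$ rather than $\bS$; this is handled cleanly by writing the difference as a sum of telescoping terms and bounding each using either $\|h\|_\infty\le1$ or the martingale property of $\P$, and is the kind of routine estimate I would not spell out in full.
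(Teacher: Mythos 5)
Your overall architecture is sound and, in parts (i) and (ii), essentially matches the paper's: existence of optimizers from compactness of $\Mc_{\eps_n}(\bm^n)$ together with uniform integrability coming from the fixed marginals, tightness from the convergent marginals, and passage to the limit in the constraint \eqref{def:rtp} and in $\E_{\P_n}[c]$ using that $\Wc^{\oplus}$-convergence gives convergence of first moments (the paper packages this as Lemma \ref{lem:compact} and Proposition \ref{prop:rusc}; your direct subsequence argument replaces the paper's sandwich $\pp(\bm)\le\pp_{\eps_n}(\bm^n)+{\li}(c)\eps_n\le\pp_{2\eps_n}(\bm)+2{\li}(c)\eps_n$ plus continuity of $\eps\mapsto\pp_\eps(\bm)$ at $0$, a harmless variation). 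The genuine gap is in the preliminary coupling step, which also underlies your lower bound in (i).

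After you replace $\bar{\bS}_{k+1}$ by $\bS_{k+1}$ and $\bar{\bS}_k$ by $\bS_k$ in the increment, the remaining term is $\E\big[h(\bar{\bS}_1,\ldots,\bar{\bS}_k)\cdot(\bS_{k+1}-\bS_k)\big]$, with $h$ still evaluated at the perturbed points. This does not vanish ``because $\P$ is a martingale measure'': $h(\bar{\bS}_1,\ldots,\bar{\bS}_k)$ need not be $\sigma(\bS_1,\ldots,\bS_k)$-measurable, and neither $\|h\|_\infty\le 1$ nor any telescoping can bridge the change of argument (replacing $h(\bar{\bS}_{\le k})$ by $h(\bS_{\le k})$ costs $2\|h\|_\infty$, not something of order $r_n$, and a modulus-of-continuity bound gets multiplied by the unbounded increment). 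Worse, the claim is false for an \emph{arbitrary} coupling satisfying your stated constraints: take $N=2$, $d=1$, $\m_1=\delta_0$, $\m_2=\tfrac12(\delta_{-1}+\delta_1)$, $\m^n_1=\tfrac12(\delta_{-\eps}+\delta_{\eps})$, $\m^n_2=\m_2$; every coupling of $\delta_0$ with $\m^n_1$ is optimal, and choosing $\bar{\bS}_1=\eps$ on $\{\bS_2=1\}$, $\bar{\bS}_1=-\eps$ on $\{\bS_2=-1\}$, $\bar{\bS}_2=\bS_2$ meets all your requirements yet gives $\E\big[\big|\E[\bar{\bS}_2|\bar{\bS}_1]-\bar{\bS}_1\big|\big]=1-\eps\gg r_n=\eps$. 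What is needed is that the randomization producing $\bar{\bS}_j$ from $\bS_j$ be independent of the future of the path; this is exactly what the paper secures through the Skorokhod-representation Lemma \ref{lem:Skorokhod}, writing $Y_k=f_k(X_k,Z_k)$ with noises $Z_k$ independent of $(X_1,\ldots,X_N)$. Concretely, you should take the conditionally independent gluing $\P(d\bx_1,\ldots,d\bx_N)\otimes\kappa_1(\bx_1,d\bar{\bx}_1)\otimes\cdots\otimes\kappa_N(\bx_N,d\bar{\bx}_N)$, where $\kappa_j$ is a disintegration of the optimal coupling of $(\m_j,\m^n_j)$; then conditioning on the whole path gives $\E\big[h(\bar{\bS}_1,\ldots,\bar{\bS}_k)\,\big|\,\bS_1,\ldots,\bS_N\big]=g(\bS_1,\ldots,\bS_k)$ for a bounded measurable $g$ with $\|g\|_\infty\le\|h\|_\infty$, and the term vanishes by the conditional-expectation form \eqref{def:rtp2} of the martingale property (the bounded measurable, not merely continuous, version is what you need here). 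With this fix your argument goes through and coincides in substance with Proposition \ref{prop:approx} of the paper.
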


\begin{remark}\label{rem:lp}
\rmi By Strassen's theorem \cite{Strassen}, $\bm\in \Pc^{\preceq}$ if and only if $\mu_k\pq \m_{k+1}$ for $k=1,\ldots, N-1$, or namely, $\int f d\m_k \le  \int  f d\m_{k+1}$ holds for all convex functions $f\in\Lam$ and $k=1,\ldots, N-1$. In addition, it follows by definition that $\Pc_{\eps}^{\preceq}\subset\Pc^N$ is convex and closed under $\Wc^{\oplus}$, and $\Mc(\bm)\subset\Mc_{\eps}(\bm)$ for all $\eps\ge 0$.

\vspace{1mm}

\no \rmii As noted before, a natural idea is to try to approximate $\pp(\bm)$ by $\pp(\bm^n)$ with finitely supported measures $\m_1^n,\ldots, \m_N^n$ since the latter amounts to an \textsf{LP} problem. For the classical \textsf{OT}, the continuous dependency of the optimization problem on $\bm$ can be derived either from the primal problem, or from its dual formulation. However, the additional martingale constraint means the usual \textsf{OT} arguments no longer work. The continuity of $\bm\mapsto \pp(\bm)$ remains an open question in general. For $d=1$, a partial result is shown in 
\cite{Juillet} and we extend it in Proposition \ref{prop:conti} below. Additionally, one has to consider suitable approximations, see Section \ref{ssec:convergence}, to even ensure that $\Mc(\bm^n)$ is nonempty. This becomes involved for $d>1$. Theorem \ref{thm:general} shows that, a  further relaxation of the martingale constraint allows to avoid both issues and to establish the desired convergence result. We also remark that the distance $r_n$ does not admit a closed-form expression and its  numerical estimation could be costly. Thanks to Theorem \ref{thm:general}, we may use in practice any upper bound $\eps_n\ge r_n$ converging to zero. 

\vspace{1mm}

\no \rmiii Finally, we point out the Lipschitz assumption can be slightly weakened. Let $E\sb\R^d$ be a closed subset such that $\supp(\m_k^n)\sb E$ for all $n\ge 1$ and $k=1,\ldots, N$. Then it suffices to assume in Theorem \ref{thm:general} that $c$, restricted to $E^N$, is Lipschitz. 
\end{remark}

We now show that $\pp_{\eps_n}(\bm^n)$ is equivalent to an \textsf{LP} problem. Hence, with a slight abuse of language, we always refer to $\pp_{\eps_n}(\bm^n)$ as the approximating \textsf{LP} problem of $\pp(\bm)$. 

\begin{corollary}\label{cor:lp}
Let $\bm^n=(\m^n_k)_{1\le k\le N}$ be chosen such that each $\m^n_k$ has finite support, i.e. 
\b*
\m^n_k(d\bx) ~=~ \sum_{i_k\in I_k} \alpha^k_{i_k} \delta_{\bx^k_{{i_k}}}(d\bx),
\e* 
where $I_k=\big\{1,\ldots, n(k)\big\}$ labels the support $\supp\big(\m^n_k\big)$. Denote by $p=\big(p_{i_1,\ldots, i_N}\big)_{i_1\in I_1,\ldots, i_N\in I_N}$ the elements of $ \R_+^D$ with $D:=\Pi_{k=1}^Nn(k)$, then $\pp_{\eps_n}(\bm^n)$ can be rewritten as an \textsf{LP} problem. 
\end{corollary}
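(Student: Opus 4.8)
The statement is essentially a bookkeeping reduction, and the plan is to make it fully explicit: for finitely supported marginals a transport plan collapses to a finite vector of weights, the reward and the marginal conditions are linear in those weights, and the relaxed martingale constraint, after adding auxiliary variables, becomes a finite system of linear inequalities. First I would note that, since each $\m^n_k$ is carried by $\{\bx^k_{i_k}:i_k\in I_k\}$, any $\P\in\Pc(\bm^n)$ is concentrated on the finite grid $\prod_{k=1}^N\{\bx^k_{i_k}:i_k\in I_k\}$, hence is completely determined by $p=(p_{i_1,\ldots,i_N})\in\R_+^D$ with $p_{i_1,\ldots,i_N}:=\P[\{(\bx^1_{i_1},\ldots,\bx^N_{i_N})\}]$; conversely every $p\in\R_+^D$ carrying the prescribed marginals defines such a $\P$. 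Under this identification the objective $\E_{\P}[c(\bS_1,\ldots,\bS_N)]=\sum_{i_1,\ldots,i_N}p_{i_1,\ldots,i_N}\,c(\bx^1_{i_1},\ldots,\bx^N_{i_N})$ is linear in $p$, and the marginal conditions $\P\circ\bS_k^{-1}=\m^n_k$ read $\sum_{i_j:\,j\neq k}p_{i_1,\ldots,i_N}=\alpha^k_{i_k}$ for all $k=1,\ldots,N$ and $i_k\in I_k$, i.e. finitely many linear equalities (which in particular force $\sum p=1$).

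\textbf{Translating the $\eps_n$-martingale constraint.} Fix $k\in\{1,\ldots,N-1\}$. I would compute the conditional expectation in \eqref{def:rtp2} with respect to the finite $\sigma$-field $\Fc_k$, whose atoms are labelled by $(i_1,\ldots,i_k)$, observing that atoms of zero $\P$-mass carry only null weights $p_{i_1,\ldots,i_N}$ and so contribute nothing; this gives
\[
\E_{\P}\Big[\Big|\E_{\P}[\bS_{k+1}\,|\,\Fc_k]-\bS_k\Big|\Big]~=~\sum_{i_1,\ldots,i_k}\Big|\,\sum_{i_{k+1},\ldots,i_N}p_{i_1,\ldots,i_N}\,(\bx^{k+1}_{i_{k+1}}-\bx^k_{i_k})\,\Big|.
\]
(The same identity also follows from the dual form \eqref{def:rtp} by optimising over $h$ valued componentwise in $\{-1,+1\}$, since on a finite grid only the values of $h$ at the atoms matter.) Writing $v^{k,(l)}_{i_1,\ldots,i_k}:=\sum_{i_{k+1},\ldots,i_N}p_{i_1,\ldots,i_N}\big(\bx^{k+1,(l)}_{i_{k+1}}-\bx^{k,(l)}_{i_k}\big)$ for the $l$-th coordinate, each of which is linear in $p$, and recalling that $|\cdot|$ is the $\ell_1$ norm, the membership $\P\in\Mc_{\eps_n}(\bm^n)$ is equivalent to
\[
\sum_{i_1,\ldots,i_k}\sum_{l=1}^d \big|v^{k,(l)}_{i_1,\ldots,i_k}\big|~\le~\eps_n,\qquad k=1,\ldots,N-1.
\]

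\textbf{From norm constraints to an \textsf{LP}, and the main point of care.} Finally I would linearise these norm constraints in the standard way: introduce auxiliary variables $w^{k,(l)}_{i_1,\ldots,i_k}\ge 0$ subject to $w^{k,(l)}_{i_1,\ldots,i_k}\ge v^{k,(l)}_{i_1,\ldots,i_k}$, $w^{k,(l)}_{i_1,\ldots,i_k}\ge -v^{k,(l)}_{i_1,\ldots,i_k}$, and $\sum_{i_1,\ldots,i_k}\sum_{l=1}^d w^{k,(l)}_{i_1,\ldots,i_k}\le\eps_n$ for each $k$. Since the $w$'s do not enter the objective, a given $p$ admits feasible $w$'s precisely when it satisfies the constraints of the previous step, so the augmented program — linear objective, finitely many linear equality/inequality constraints, variables ranging in $\R_+$ — has the same optimal value as $\pp_{\eps_n}(\bm^n)$, which is what ``rewritten as an \textsf{LP}'' means. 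The whole argument is routine; the only steps deserving a careful word are the treatment of the $\Fc_k$-atoms of null mass in the conditional-expectation computation (the degenerate denominators are harmless because the corresponding weights vanish) and the verification that appending the slack variables $w$ alters neither the feasible projection onto $p$ nor the optimal value — and neither is a genuine obstacle.
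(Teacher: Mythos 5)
Your proposal is correct and follows essentially the same route as the paper: identify any $\P\in\Mc_{\eps_n}(\bm^n)$ with a vector $p\in\R_+^D$, write the objective, marginal and relaxed martingale constraints as in \eqref{pb:lp}, and linearise the $\ell_1$-norm constraints by nonnegative slack variables (your $w^{k,(l)}$ play exactly the role of the paper's $\delta^k$). Your additional details — the explicit conditional-expectation computation on the atoms of $\Fc_k$, the treatment of null atoms, and the check that appending slacks preserves the optimal value — are correct elaborations of steps the paper leaves implicit.
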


\begin{proof}
By assumption, every element $\P\in\Mc_{\eps_n}(\bm^n)$ can be identified by some $p\in\R_+^D$. Therefore, $\pp_{\eps_n}(\bm^n)$ turns to be the optimization problem below
\begin{align}
& \max_{p\in\R_+^D}~ \sum_{i_1,\ldots, i_N}  p_{i_1,\ldots, i_N}  c\big(\bx^1_{i_1}, \ldots, \bx^N_{i_N}\big) \nonumber \\
\mbox{s.t.}\q & \sum_{i_1,\ldots, i_{k-1},i_{k+1},\ldots, i_N} p_{i_1,\ldots, i_N}  ~=~ \alpha^k_{i_k},~ \mbox{ for } i_k\in I_k \mbox{ and } k=1,\ldots, N, \label{pb:lp}\\
&\q \sum_{i_1,\ldots, i_k} \left |\sum_{i_{k+1},\ldots, i_N} p_{i_1,\ldots, i_N} \left(\bx^{k+1}_{i_{k+1}} - \bx^k_{i_k}\right) \right| ~ \le~  \eps_n,~ \mbox{ for } k=1,\ldots, N-1. \nonumber
\end{align}
\eqref{pb:lp} is not an  \textsf{LP} formulation, however, by  adding slack variables $\big(\delta^k_{i_1,\ldots, i_k, j}\big)_{i_1\in I_1,\ldots, i_k\in I_k, j\in J} \in\R_+^{D_k}$ with $J:=\big\{1,\ldots, d\big\}$ and $D_k:=d\Pi_{r=1}^kn(r)$, \eqref{pb:lp} is equivalent to the following \textsf{LP} problem:
\begin{align*}
 &\max_{p\in\R_+^D,~ \delta^1 \in \R_+^{D_1},~ \ldots,~ \delta^{N-1}\in \R_+^{D_{N-1}}}~ \sum_{i_1,\ldots, i_N}  p_{i_1,\ldots, i_N} c\big(\bx^1_{i_1}, \ldots, \bx^N_{i_N}\big) \\
 \mbox{s.t.} \q &  \sum_{i_1,\ldots, i_{k-1},i_{k+1},\ldots, i_N} p_{i_1,\ldots, i_N}  ~=~ \alpha^k_{i_k},~ \mbox{ for } i_k\in I_k \mbox{ and } k=1,\ldots, N, \\
& \q -\delta^k_{i_1,\ldots, i_k, j} ~\le~ \sum_{i_{k+1},\ldots, i_N} p_{i_1,\ldots, i_N}\left(x^{k+1}_{i_{k+1},j}-x^{k}_{i_k,j}\right) ~ \le~ \delta^k_{i_1,\ldots, i_k, j},~ \mbox{ for } i_k\in I_k, \\
\q & \q \q \q \q   j\in J \mbox{ and } k=1,\ldots, N, \\ 
\q & \q \q \sum_{i_1,\ldots, i_k, j} \delta^k_{i_1,\ldots, i_k,j} ~ \le~  \eps_n,~ \mbox{ for } k=1,\ldots, N-1,
\end{align*}
where we recall $\bx^k_{i_k}=\big(x^k_{i_k,1},\ldots, x^k_{i_k,d}\big)$. 
\end{proof}

Having obtained a general convergence result, we next turn to the ensuing problem on  
the convergence rate of $\pp_{\eps_n}(\bm^n)$. We provide an estimation of the convergence rate for the one-step model on real line. To the best of our knowledge, the error bound below is the first of its kind in the literature.

\begin{theorem}\label{thm:convergence_rate}
Let $N=2$ and $d=1$, or equivalently, $\bm=(\m,\n)$ and $c:\R^2\to\R$. In addition to the conditions of Theorem \ref{thm:general}, we assume that $\sup_{(x,y)\in\R^2}\big|\partial_{yy}^2c(x,y)\big|<\infty$ and  $\n$ has a finite second moment. Then there exists $C>0$ such that 
\b*
\big|\pp_{\eps_n}(\m^n,\n^n)-\pp(\m,\n)\big| ~ \le ~ C\inf_{R>0} \lambda_n(R),~ \mbox{ for all } n\ge 1,
\e*
where $\lambda_n: (0,\infty) \to\R$ is given by
\b*
\lambda_n(R) ~:=~ (R+1)\eps_n + \int_{(-\infty,-R)\cup (R,\infty)} \big(|y|-R\big)^2\n(dy).
\e*
In particular, the convergence rate is proportional to $\eps_n$ if $\supp(\n)$ is bounded.
\end{theorem}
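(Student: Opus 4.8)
The strategy is to interpolate between $\pp_{\eps_n}(\m^n,\n^n)$ and $\pp(\m,\n)$ by changing one object at a time, controlling each change by an explicit quantity, and then optimizing over a free truncation level $R$. Concretely, I would bound $\big|\pp_{\eps_n}(\m^n,\n^n)-\pp(\m,\n)\big|$ by the sum of three terms: the error from replacing $c$ by a ``tame'' payoff $c_R$ that is linear in $y$ outside $[-R,R]$ and has bounded second derivative; the error from replacing $(\m^n,\n^n)$ by $(\m,\n)$ in $\pp_{\eps_n}(\cdot)$; and the error from dropping the relaxation, i.e. passing from $\pp_{\eps_n}(\m,\n)$ to $\pp_0(\m,\n)=\pp(\m,\n)$. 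The point of the truncation is that a $C^2_b$-in-$y$ payoff allows us to bound the relaxation error \emph{linearly} in $\eps_n$ with constant controlled by $\sup|\partial_{yy}^2 c_R|$, which is where the factor $(R+1)\eps_n$ will come from; the tail integral $\int_{|y|>R}(|y|-R)^2\,\n(dy)$ is exactly the cost of having replaced $c$ by $c_R$, estimated using the second-order Taylor bound and the martingale (or $\eps_n$-martingale) constraint which pins the conditional mean of $S_2$ near $S_1$.

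\textbf{Key steps.} First, fix $R>0$ and define $c_R(x,y)$ by Taylor-expanding $c$ in $y$ around $\pm R$: for $y>R$ set $c_R(x,y)=c(x,R)+\partial_y c(x,R)(y-R)$, symmetrically for $y<-R$, and $c_R=c$ on $[-R,R]$. Then $\sup|\partial_{yy}^2 c_R|\le \sup|\partial_{yy}^2 c|=:M$, $c_R$ is affine in $y$ for $|y|>R$, and $|c(x,y)-c_R(x,y)|\le \tfrac{M}{2}(|y|-R)^2\1_{\{|y|>R\}}$. For any $\P$ with second marginal $\n^n$ (resp. $\n$) close to $\n$, $\big|\E_\P[c]-\E_\P[c_R]\big|\le \tfrac{M}{2}\int(|y|-R)^2\1_{\{|y|>R\}}\,d\n^n$, and a Wasserstein-continuity argument (using the finite second moment of $\n$ and the assumed Lipschitz control, plus $\Wc(\n^n,\n)\to 0$) turns this into a bound by $\int(|y|-R)^2\1_{\{|y|>R\}}\,d\n$ up to lower-order terms absorbed into the constant. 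Second, for the relaxation error on the tame payoff: given $\P\in\Mc_{\eps_n}(\m,\n)$, the $\eps_n$-martingale condition \eqref{def:rtp2} says $\E_\P|\E_\P[S_2\mid \Fc_1]-S_1|\le \eps_n$; since $c_R(x,\cdot)$ has a second derivative bounded by $M$ and grows at most linearly (slope bounded on $[-R,R]$ by $\li(c)$ roughly, and equal to $\partial_y c(\cdot,\pm R)$ outside), comparing $\E_\P[c_R(S_1,S_2)]$ to $\E_\P[c_R(S_1,\E_\P[S_2\mid\Fc_1])]$ via Jensen-type second-order expansion costs $O(M)\cdot(\text{local variance})$, but more directly one compares the $\eps_n$-martingale plan to a genuine martingale plan with the same marginals and estimates the payoff gap by the first-order slope times the mean displacement $\eps_n$ plus the second-order term times $R$ — yielding a bound of the form $C(M,\li(c))\,(R+1)\,\eps_n$. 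Third, combine: $\pp_{\eps_n}(\m^n,\n^n)-\pp(\m,\n)\le [\text{payoff-swap error}]+[\text{marginal-swap error}]+[\text{relaxation error}]\le C\big((R+1)\eps_n+\int_{|y|>R}(|y|-R)^2\,d\n\big)=C\lambda_n(R)$, and the reverse inequality similarly (here we use that $r_n\le\eps_n$ guarantees $\Mc_{\eps_n}(\m^n,\n^n)\neq\emptyset$, from Theorem \ref{thm:general}). Finally, take the infimum over $R>0$; if $\supp(\n)$ is bounded, choosing $R$ past the support kills the tail term and leaves $C'\eps_n$.

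\textbf{Main obstacle.} The delicate point is the relaxation-error estimate — quantifying, \emph{with an explicit linear-in-$\eps_n$ rate}, how far $\pp_{\eps_n}(\m,\n)$ can exceed $\pp(\m,\n)$ for the tamed payoff. The soft compactness argument behind Theorem \ref{thm:general}(i) gives no rate. The fix is to exploit that $c_R$ is affine in $y$ outside $[-R,R]$ and $C^2$ with second derivative $\le M$ inside: one can then dominate $c_R(x,y)$ from above by a function of the form $a(x)+b(x)y+\tfrac{M}{2}\,\rho_R(y)$ where $\rho_R$ is convex, piecewise — designed so that $\E_\P[\rho_R(S_2)]-\E_\P[\rho_R(S_1)]$ is controlled by the convex order gap, which for $\eps_n$-martingale plans is itself $O(R\eps_n)$ via an integration-by-parts / potential-function argument against the $\eps_n$-martingale defect. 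Making this domination sharp enough to produce exactly the $(R+1)\eps_n$ scaling (rather than, say, $R^2\eps_n$) is the crux; it hinges on the quadratic payoff bound interacting with a \emph{first-order} mean-displacement bound $\eps_n$ rather than a variance bound, and on carefully tracking that the linear-growth part contributes only the ``$+1$'' via $\li(c)\,\eps_n$. Everything else — the Taylor truncation bounds and the Wasserstein-continuity of $\m\mapsto\E_\P[c_R]$ on the finite-second-moment class — is routine.
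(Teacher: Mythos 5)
Your overall architecture --- truncate the payoff so it is affine in $y$ outside $[-R,R]$, reduce to a relaxation-rate estimate, and optimize over $R$ --- matches the paper's plan, but the step you yourself call the crux, a bound of the form $\pp_{\eps}^{c_R}(\m,\n)-\pp^{c_R}(\m,\n)\le C L(R+1)\eps$, is not actually established, and the two routes you sketch for it do not close the gap. Comparing an $\eps$-approximating martingale plan to a genuine martingale plan with the \emph{same} marginals at cost $O(\eps)$ is a quantitative stability claim for which you offer no proof (the compactness argument behind Theorem \ref{thm:general} gives no rate, as you note). Your alternative, dominating $c_R(x,y)\le a(x)+b(x)y+\tfrac{M}{2}\rho_R(y)$ and controlling $\E_{\P}[\rho_R(S_2)]-\E_{\P}[\rho_R(S_1)]$ as ``the convex order gap, which for $\eps_n$-martingale plans is $O(R\eps_n)$'', is incorrect as stated: for \emph{every} $\P\in\Pc(\m,\n)$, relaxed or not, this difference equals $\int\rho_R\,d\n-\int\rho_R\,d\m$, a fixed quantity determined by the marginals and independent of $\eps_n$, so it cannot detect the relaxation. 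What your domination argument really needs is a \emph{tight} superhedging certificate for $\pp^{c_R}(\m,\n)$ with dynamic part bounded by $O\big(L(R+1)\big)$, i.e.\ quantitative dual attainment; this is precisely the nontrivial input the paper imports. The paper first replaces $c$ by $c_L(x,y)=c(x,y)-Ly^2/2$ (harmless, since the $y^2$ part has the same integral against the fixed marginal $\n$ under every plan, so $\pp^c_{\eps}-\pp^c=\pp^{c_L}_{\eps}-\pp^{c_L}$), truncates $c_L$ so that $c_L^R$ is concave in $y$ and Lipschitz with constant $L(R+1)$, and then invokes Remark 2.6 of \cite{BLO} to obtain a dual optimizer $(H^*,\vp^*,\ps^*)$ with $\|H^*\|_{\infty}\le 18L(R+1)$; since $\big(H^*,\vp^*+\eps\|H^*\|_{\infty},\ps^*\big)\in\Dc_{\eps}^{c_L^R}$, the duality of Theorem \ref{thm:duality} yields $\pp^{c_L^R}_{\eps}-\pp^{c_L^R}\le 18\eps L(R+1)$. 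Without this ingredient (or an explicit construction replacing it), your $(R+1)\eps_n$ term is unsupported.

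A secondary, fixable flaw: you perform the payoff swap under the discretized marginal and then claim a ``Wasserstein-continuity argument'' converts $\int_{|y|>R}(|y|-R)^2\,\n^n(dy)$ into the corresponding integral against $\n$ up to absorbable terms. Since the integrand grows quadratically, the hypothesis $\Wc(\n^n,\n)\le\eps_n$ alone gives no quantitative control of this difference (first-order Wasserstein closeness says nothing about second-moment tails). The paper avoids this by first applying Corollary \ref{cor1} with the original Lipschitz $c$, at cost $\li(c)\eps_n$, thereby reducing to $\pp^c_{2\eps_n}(\m,\n)-\pp^c(\m,\n)$ so that all subsequent estimates, including the tail term, involve only $\n$; you should reorder your interpolation in the same way.
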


We postpone the proofs of Theorems \ref{thm:general} and \ref{thm:convergence_rate} to Section \ref{sec:proof}, and end this section with a discussion about how Theorem \ref{thm:general} is applied to solve other constrained \textsf{OT} problems arising in finance. 

\begin{remark}\label{rem:mrh}
In general, the distributions $\m_1,\ldots, \m_N$ will not be fully specified by the market when $d\ge 2$. For $k=1,\ldots, N$, let $\bS_k:=\big(S^{(1)}_{k},\ldots, S^{(d)}_{k}\big)$, where $S^{(i)}_{k}$ stands for the price of the $i^{\textrm{th}}$ stock at time $k$. Then, in practice, only prices of call options $\big(S_{k}^{(i)}-K\big)^+$, or put options $\big(K-S_{k}^{(i)}\big)^+$, for a finite set of strikes $K$ are liquidly available in the market. 
Even assuming call options are quoted for all possible strikes $K$ only yields the distributions $\m_{k,i}$ of $S_{k}^{(i)}$. Therefore, this leads to a modified optimization problem. Denote $\vec{\m}_k:=(\m_{k,1},\ldots, \m_{k,d})$ and $\vec{\bm}:=(\vec{\m}_k)_{1\le k\le N}$, and let $\Mc_{\eps}(\vec{\bm})$ be the set of $\eps-$approximating martingale measures $\P$ satisfying $\P\circ \big(S_{k}^{(i)}\big)^{-1} = \m_{k,i}$,  for  $k=1,\ldots, N$  and  $i=1,\ldots, d$. Then we define the optimization problem by
\be\label{def:mmot}
\pp_{\eps}(\vec{\bm}) ~:=~ \sup_{\P\in \Mc_{\eps}(\vec{\bm})}~ \E_{\P}\big[ c(\bS_1,\ldots, \bS_N)\big].
\ee
The problem \eqref{def:mmot}, with $\eps=0$, was first introduced by Lim and called {\em multi-martingale optimal transport} in \cite{Lim2}. Although this paper focuses on the numerical computation of $\pp(\bm)$,  we emphasize that Theorem \ref{thm:general} admits an immediate extension to approximate $\pp_{0}(\vec{\bm})$. 
\end{remark}

\section{A numerical scheme for $\pp(\bm)$: probability discretization}\label{sec:num_pb}

Motivated by Theorem \ref{thm:general} and  Corollary \ref{cor:lp}, we next develop a numerical scheme to compute $\pp(\bm)$ based on a suitable discretization of the marginal distributions. The key is to select a suitable sequence $(\bm^n)_{n\ge 1}$ such that, for $k=1,\ldots, N$,

\begin{itemize}
\item[(a)] $\mu^n_k$ is supported on a finite set $\big\{\bx^k_{i_k}: i_k\in I_k\big\}$,

\vspace{1mm}

\item[(b)] the weights $\mu^n_k\big[\{\bx^k_{i_k}\}\big]$ can be either computed explicitly or approximated with a precision that is known {\em a priori}, 

\vspace{1mm}

\item[(c)] an upper bound for $\Wc\big(\m_k^n,\m_k\big)$ is easy to obtain.
\end{itemize}
Posed as above, the problem is intimately linked to the optimal quantization for probability measures whose goal is to best approximate a  given probability measure $\m\in\Pc$ by a discrete measure with a given number of supporting points. For the given $\mu$,  its  $n^d-$quantization $\m^n$ related to  $(\bx_i)_{1\le i\le n^d}\subset\R^d$ and  $(E_i)_{1\le i\le n^d}$ is defined by $\m^n(d\bx):=\sum_{i=1}^{n^d} \m[E_i] \delta_{\bx_i}(d\bx)$, where $(E_i)_{1\le i\le n^d}$ is a $\m-$partition, i.e. $\m\big[E_i\cap E_{j}\big]=0$ for all $i\neq j$ and $\m\big[\cup_{1\le i\le n^d} E_i\big]=1$. Accordingly, the $n^d-$optimal quantization of $\mu$ is the solution to 
\be\label{def:quantisation}
\inf~ \left\{\sum_{i=1}^{n^d} \int_{E_i} \big |\bx-\bx_i\big |\m(d\bx)\right\},
\ee 
where the inf is taken over all $(\bx_i)_{1\le i\le n^d}$ and $\m-$partitions $(E_i)_{1\le i\le n^d}$.  We state the convergence result from Graf and Luschgy  \cite{GL}, see also \cite{GLP, DFP}.

\begin{theorem}[Graf and Luschgy]\label{thm:quantisation}
For each $n\ge 1$, the inf in \eqref{def:quantisation} can be  achieved by an $n^d-$optimal quantizer $(\bx^*_i)_{1\le i\le n^d}$ and $(E^*_i)_{1\le i\le n^d}$. Let $\m^n_*$ be the corresponding optimizer, then $\lim_{n\to\infty}n\Wc(\m^n_*,\m)$ exists and is finite.
\end{theorem}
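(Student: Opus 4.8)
Here is my plan for proving Theorem~\ref{thm:quantisation} (Graf and Luschgy).

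\paragraph{Strategy.}
The statement has two parts: existence of an optimal $n^d$-quantizer for each fixed $n$, and the existence and finiteness of the limit $\lim_{n\to\infty} n\,\Wc(\m^n_*,\m)$. I would treat them separately, and in fact only the first is genuinely elementary; the second is the classical ``asymptotics of the quantization error'' result and I would prove it by a subadditivity/superadditivity (dyadic covering) argument rather than trying to identify the constant.

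\paragraph{Existence for fixed $n$.}
First I would reformulate the problem: a $\m$-partition $(E_i)$ paired with points $(\bx_i)$ is suboptimal unless each $E_i$ is (up to $\m$-null sets) the Voronoi cell of $\bx_i$ with respect to the points $\{\bx_1,\dots,\bx_{n^d}\}$, since reassigning each $\bx$ to its nearest point can only decrease $\int |\bx-\bx_i|\,\m(d\bx)$. Hence \eqref{def:quantisation} equals
\b*
\inf_{\bx_1,\dots,\bx_{n^d}\in\R^d}\ \int_{\R^d} \min_{1\le i\le n^d}|\bx-\bx_i|\ \m(d\bx),
\e*
an infimum over $(\R^d)^{n^d}$ of a continuous function (continuity in the points follows from dominated convergence, using $\min_i|\bx-\bx_i|\le |\bx-\bx_1|$ and $\m\in\Pc$, i.e.\ finite first moment). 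The function $F(\bx_1,\dots,\bx_{n^d}):=\int\min_i|\bx-\bx_i|\,\m(d\bx)$ is not coercive in the usual sense (sending one point to infinity is harmless), so I would instead argue that a minimizing sequence may be taken bounded: if some point escapes to infinity its cell eventually has negligible effect and can be dropped, reducing to fewer points; one then checks that using strictly fewer points never helps (adding a duplicate point does not increase $F$), so WLOG the minimizing sequence lies in a fixed compact set, and lower semicontinuity (here, continuity) gives a minimizer $(\bx^*_i)$. The optimal $\m$-partition is then the Voronoi partition of these points, and $\m^n_*$ is as defined.

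\paragraph{The asymptotics $n\,\Wc(\m^n_*,\m)\to c_\m<\infty$.}
Write $e_n:=\Wc(\m^n_*,\m) = \inf\{\int\min_i|\bx-\bx_i|\,\m(d\bx): n^d \text{ points}\}$ for the $n^d$-th quantization error (in $L^1$, order-$1$ Wasserstein). The plan is the standard one:
\begin{itemize}
\item[(1)] Reduce to $\m$ with compact support: by truncation and the finite first moment, $\m$ restricted to a large cube $Q$ approximates $\m$ in $\Wc$, and the quantization errors change controllably; I would carry an $\eps$ through.
\item[(2)] \emph{Upper bound, scaling $n^{-1}$:} partition a cube containing $\supp(\m)$ into $n^d$ congruent subcubes of side $O(1/n)$, put one point at the center of each; then $e_{n}\le$ (subcube diameter) $= O(1/n)$, so $\limsup_n n e_n<\infty$.
\item[(3)] \emph{Existence of the limit via near-additivity:} for the uniform distribution $\mathcal U$ on the unit cube, let $a_n$ denote its $n$-th quantization error. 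Splitting the cube into $m^d$ translated/scaled copies of itself shows $a_{k m} \lesssim \tfrac1m a_k$ up to boundary corrections, which by a Fekete-type argument yields that $m\,a_{m^d}$ — more precisely $n\,a_n$ along $n=m^d$, then filled in by monotonicity — converges to a finite constant $C(d)>0$. (Positivity: a volume/packing lower bound, $e_n\ge c\,n^{-1}$, since $n^d$ balls of radius $r$ cannot cover a set of positive volume unless $r\gtrsim 1/n$.)
\item[(4)] \emph{General $\m$ with density:} decompose $\m$ (Lebesgue part plus singular part, the latter contributing $o(1/n)$ to the error by a covering argument) and, on each small cube where the density $f$ is nearly constant, allocate a number of points proportional to $(\int_{\text{cube}} f)^{d/(d+1)}$; optimizing the allocation (Hölder) gives
\b*
\lim_{n\to\infty} n\,e_n \;=\; C(d)\,\Big(\int_{\R^d} f^{\frac{d}{d+1}}\,d\lambda\Big)^{\frac{d+1}{d}} \;<\;\infty,
\e*
with the matching lower bound obtained by the same Voronoi/volume estimate applied cube by cube and a superadditivity argument (Fatou). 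For the purposes of this theorem one only needs existence and finiteness of the limit, so the precise formula can be quoted or suppressed.
\end{itemize}

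\paragraph{Main obstacle.}
The delicate point is step~(3)--(4): showing the renormalized error $n e_n$ actually \emph{converges} rather than merely being bounded between two positive constants. This requires an almost-subadditivity estimate that is clean enough to run a Fekete-lemma argument, and then a separate, robust lower bound matching the upper bound up to $o(1/n)$; controlling the boundary/overlap corrections when cutting the cube into many pieces (and handling the singular part of $\m$) is where the real work lies. Since the paper only invokes the \emph{existence and finiteness} of the limit, I would cite \cite{GL} for the sharp constant and present steps (1)--(3) in enough detail to establish convergence, treating (4) briefly.
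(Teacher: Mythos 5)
The paper offers no proof of this statement: Theorem~\ref{thm:quantisation} is quoted from Graf and Luschgy \cite{GL}, so your outline can only be compared with the standard literature proof of Zador's theorem, which your steps (2)--(4) indeed follow (grid upper bound on a compact set, self-similarity of the uniform law on the cube to produce the constant $C(d)$, local-density allocation optimized by H\"older, Voronoi/volume lower bound), and your existence argument for a fixed $n$ (Voronoi reassignment, dominated convergence, dropping escaping points and allowing duplicates) is essentially the argument in \cite{GL}. Citing \cite{GL} for the sharp constant, as you propose, is exactly what the paper does.

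There is, however, a genuine gap in your step (1). As you state it, the reduction to compact support uses only that $\m\in\Pc$ has a finite first moment, and under that hypothesis alone the conclusion is false: the asymptotics $\lim_n n\,\Wc(\m^n_*,\m)<\infty$ requires a moment of order $1+\delta$ for some $\delta>0$, which is precisely the paper's Assumption~\ref{ass:mom} (imposed on every generic measure considered in Section~\ref{sec:num_pb}). For instance, in $d=1$ a density decaying like $x^{-2}(\log x)^{-2}$ has a finite first moment, yet the lower half of Zador's theorem (which needs no moment assumption) forces $\liminf_n n\,\Wc(\m^n_*,\m)\ge C(1)\big(\int f^{1/2}dx\big)^{2}=\infty$. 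Moreover, the truncation cannot be run the way you describe even granting the stronger moment: writing $e_n(\m):=\Wc(\m^n_*,\m)$ and noting that $e_n$ is $1$-Lipschitz with respect to $\Wc$ (since $e_n(\m)$ is the $\Wc$-distance from $\m$ to the closest measure with at most $n^d$ atoms), truncating to a fixed cube $Q$ only yields $|e_n(\m)-e_n(\m_Q)|\le\Wc(\m_Q,\m)$, a constant independent of $n$, which is useless at the scale $1/n$ because $n\,\Wc(\m_Q,\m)\to\infty$; letting $Q=Q_n$ grow with $n$ reintroduces the tail into the asymptotics and changes the limiting constant. The correct treatment --- and the place where the $\theta$-th moment is genuinely consumed --- is to reserve a vanishing fraction of the $n^d$ codepoints for the complement of the cube and to bound their contribution to $\int\min_i|\bx-\bx_i|\,\m(d\bx)$ by a H\"older/Markov estimate using $\theta>1$, so that the tail contributes $o(1/n)$. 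With the moment hypothesis made explicit and this repair to step (1), the remainder of your outline is the standard route.
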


It follows with $A_{\m}:=\lim_{n\to\infty}n\Wc(\m^n_*,\m)$ that, there exists $n_{\m}\ge 1$ such that 
\b*
\Wc(\m^n_*,\m)~ \le~ (A_{\m}+1)/n,~ \mbox{ for all } n\ge n_{\m}.
\e*
Despite their theoretical appeal, in practice, the use of optimal quantizers is problematic as the key quantities above, such as $A_{\m}$ and $n_{\m}$ are in general unknown. Similarly, in general, the quantities $\m^n_*\big[\{\bx_i^*\}\big]$ are hard to compute exactly or approximate with a prescribed  accuracy. To overcome these difficulties, we adopt two different discretization methods, both of which can be implemented in practice. Our first method, which we call \emph{deterministic discretization}, applies when we are given the marginals $\m_1,\ldots, \m_N$ in the sense of being able to compute integrals against them. This is the case, e.g. when $\m_1,\ldots, \m_N$  have known density functions. The second method, called \emph{random discretization}, applies when we are able to sample from the marginals. Throughout Section \ref{sec:num_pb}, we need the following integrability condition, which corresponds to the market price of a power option being finite.

\begin{assumption}[$\theta^{\rm th}-$moment] \label{ass:mom}
There exist $\theta>1$ and $M_{\theta} < \infty$ such that 
\b*
\int_{\R^d} |\bx|^{\theta} \m_N(d\bx) ~ \le ~ M_{\theta}.
\e*
\end{assumption}

Note that by Jensen's inequality, the above conditions  implies $\int_{\R^d} |\bx|^{\theta} \m_k(d\bx) \le M_{\theta}$ for $k=1,\ldots, N$. Further, whenever we consider a generic measure $\mu$ below, we will also assume it satisfies Assumption \ref{ass:mom}.

\subsection{Deterministic discretization}\label{ssec:deter}

We devise a simple discretization procedure which has the same asymptotic efficiency as the optimal quantization when $\supp(\m)$ is bounded. We assume here that $\m$ is known in the sense that the probabilities $\m[E]$ are known for all $E\in\Bc(\R^d)$. We start with this idealized setting and then consider the case of known densities, which allows to compute $\m[E]$ with a certain accuracy.

\paragraph{Step 1: Truncation}
For $R>0$, let $\bB_R\subset\R^d$ denote the box defined by
\b*
\bB_R ~:=~ \Big\{\bx=(x_1,\ldots, x_d):\q  |x_i|\le R,~ \mbox{ for } i=1,\ldots, d\Big\}.
\e*
Then one has $\big\{\bx\in\R^d: |\bx|\le R\big\}\subset \bB_R\subset \big\{\bx\in\R^d: |\bx|\le dR\big\}$. Take  $R$ such that $\m[\bB_R]>0$, and truncate $\m$ into a probability measure $\m_R(d\bx) := \mathds{1}_{\bB_R}(\bx)\m(d\bx) + \m[\bB_R^c]\delta_{\0}(d\bx)$,  where $\bB_R^c:=\R^d\setminus \bB_R$. Clearly, $\m_R$ is supported on $\bB_R$. Consider a random variable $X$ drawn from $\m$  and observe that $\mathds{1}_{\bB_R}(X)X$ is distributed according to $\m_R$. We have, by the definition of Wasserstein distance,
\be\label{ineq:trunc}
\Wc(\m_R, \m) ~\le~  \E\big[\big|\mathds{1}_{\bB_R}(X)X-X\big|\big] ~ = ~ \int_{\bB_R^c} |\bx|\m(d\bx) ~ \le ~ M_{\theta}/R^{\theta-1},
\ee 
which yields in particular $\lim_{R\to\infty}\Wc(\m_R, \m)=0$. 

\paragraph{Step 2: Discretization}
Denote by $\Om_n\subset \R^d$ the countable  subspace consisting of elements $ \bq/n$ for all $\bq=(q_1,\ldots, q_d)\in\Z^d$. For each $\bq\in\Z^d$, we denote by $V(\bq/n)\subset\R^d$ the subset of $\bx=(x_1,\ldots, x_d)$ such that $\lfloor n\bx \rfloor=\bq$, i.e. $\lfloor nx_i \rfloor  =q_i$ for $i=1,\ldots, d$, where for $a\in\R$, $\lfloor a \rfloor\in\Z$ is the largest integer less or equal to $a$. We construct a probability measure $\m^{(n)}$ whose support is  included in  $\Om_n$ by $\m^{(n)}\big[\{\bq/n\}\big]:= \m\big[V(\bq/n)\big]$.  
Then $\m^{(n)}\in\Pc$ satisfies for all $f\in\Lambda$, 
\be\label{eq:discret}
\int f d\m^{(n)} ~=~ \sum_{\bq\in\Z^d} f(\bq/n) \m^{(n)}\big[\{\bq/n\}\big] ~
=~ \int f^{(n)} d\m,
\ee
where $f^{(n)}:\R^d\to \R$ is defined by $f^{(n)}(\bx):= f\big(\lfloor n\bx \rfloor/n \big)$. This implies in  view of \eqref{def:wass_dual} that
\b*
\Wc\big(\m^{(n)},\m\big) ~=~ \sup_{f\in\Lam_1}~ \left | \int  f d\m^{(n)} - \int f d\m \right | ~\le~ \sup_{f\in\Lam_1}~  \int \big|f^{(n)}-f \big|  d\m ~\le ~ d/n,
\e*
where the second inequality is by \eqref{eq:discret}. Notice that, if $\supp(\m)$ is bounded, then so is $\supp\big(\m^{(n)}\big)$, and the distance $\Wc\big(\m^{(n)}, \m\big)$ is of order $1/n$, which is the same as for $\Wc(\m_*^{n}, \m)$. 

\paragraph{Step 3: Choice of the parameters}
Replacing $\m$ by $\m_R$ in {\em Step 2},  one has 
\b*
\Wc\big(\m_R^{(n)},\m\big) ~\le~ \Wc\big(\m_R^{(n)},\m_R\big) + \Wc(\m_R,\m) ~\le ~ d/n  + M_{\theta}/R^{\theta-1}.
\e*
It follows from Young's inequality that $
|a|^{\gamma} + |b|^{\theta} \ge \gamma^{1/\gamma}\theta^{1/\theta}|ab|$ for all $a$, $b\in \R$,  where $\gamma>1$ is the conjugate number of $\theta$, i.e. $1/\theta+1/\gamma=1$. Setting respectively $a=(d/n)^{1/\gamma}$ and $b=\big(M_{\theta}/R^{\theta-1}\big)^{1/\theta}$, it holds that $
d/n  + M_{\theta}/R^{\theta-1} \ge (\gamma d)^{1/\gamma}(\theta M_{\theta})^{1/\theta}/(Rn)^{1/\gamma}$, and the equality can be achieved for $R^{\theta-1}=\theta M_{\theta}n/\gamma d$. Since the cardinal of $\supp\big(\m_R^{(n)}\big)$ is proportional to $(Rn)^d$ which determines the number of variables in the corresponding \textsf{LP} problem, setting $R=R_n:=\big(\theta M_{\theta}n/\gamma d\big)^{1/(\theta-1)}$ leads to an optimal upper bound for a fixed computational complexity, i.e. $\Wc\big(\m^{(n)}_{R_n}, \m\big) \le  \gamma d/n$. Replacing $\m$ respectively by $\m_k$ for $k=1, \ldots, N$, we obtain $\bm^n=(\m_k^n)_{1\le k\le N}$  following the above steps with $\m^n_k:=\m^{(n)}_{k,R_n}$. Then Theorem \ref{thm:general} yields $
\lim_{n\to\infty}\pp_{N\gamma d/ n}(\bm^n) = \pp(\bm)$. 

\begin{remark}\label{rk:discretization_convexorder}
In general $\bm^n$ may no longer belong to $\Pc^{\pq}$, even if $\bm\in\Pc^{\pq}$. When $d=1$,  an explicit discretization preserving the increasing convex order is given in Section \ref{ssec:convergence}. In a recent parallel work  Alfonsi et al.\ \cite{ACJ} have investigated methods of constructing $\bm^n$ such that $\bm^n\in \Pc^\pq$. 
\end{remark}

The above analysis allows us to construct approximating measures $\m^n$ assuming the values $\m\big[V(\bq/n)\big]$ are known for all $\bq/n\in\Om_n$. This may be possible, e.g. when $\m$ is  atomic, but in general we need to argue how to approximate well such values. We do this for measures which admit a density function, i.e. $\m(d\bx)=\rho(\bx)d\bx$. In this case, a simple point estimate $\rho(\bx_{\bq})/n^d$, for some $\bx_{\bq}\in V(\bq/n)$, provides a natural candidate to approximate $\m^{(n)} \big[\{\bq/n\}\big]$. However, to use Theorem \ref{thm:general}, we need to bound the Wasserstein distance between the resulting measure and $\m^{(n)}$ in an explicit and non-asymptotic manner.

As before, we truncate $\m$ to $\bB_{R}$ and set $R$ to be an integer $m$ for simplicity. Let $\m_m^{(n)}$ and $\tmn$ be supported on $\Om_n\cap \bB_m$ and defined as follows: If $\0\neq \bq/n \in \bB_m$, then
\b*
\mn_m\big[\{\bq/n\}\big] ~:=~
\int_{V(\bq/n)} \rho(\bx)d\bx &\mbox{and}& \tmn\big[\{\bq/n\}\big] ~:=~ \rho(\bx_{\bq})/n^d,
\e*
where $\bx_{\bq}\in V(\bq/n)$ are chosen arbitrarily, and
\b*
\mn_m\big[\{\0\}\big] ~:=~ 1 - \sum_{\bq/n\neq \0} \mn_m\big[\{\bq/n\}\big]
&\mbox{and}&
\tmn\big[\{\0\}\big] ~:=~ 1 - \sum_{\bq/n\neq \0}  
 \tmn\big[\{\bq/n\}\big],
\e*
where the sums above are indeed finite as 
$\mn_m\big[\{\bq/n\}\big]=\tmn\big[\{\bq/n\}\big] =0$ for $\bq/n \notin \bB_m$. As above, $\Wc\big(\m_m^{(n)},\m\big) \le d/n  + M_{\theta}/m^{\theta-1}$ and the following gives an upper bound for $\Wc\big(\tmn, \m_m^{(n)}\big)$.  

\begin{proposition}\label{prop:estimation}
Let Assumption \ref{ass:mom} hold. Suppose that  $\rho$ is continuous, or namely, for each $R>0$, there exists $\kappa_R: [0,\infty)\to \R $ that is non-decreasing such that $\kappa_R(0)=0$ and 
\b*
\big|\rho(\bx) -\rho(\by) \big| ~\le~  \kappa_R(|\bx-\by|),\q  \mbox{for all }  \bx,~ \by\in\bB_R.
\e* 
\rmi If $\m$ has bounded support, i.e. $\supp(\m)\sb \bB_R$ for some $R>0$, then
\be\label{ineq:num0}
~~~~~ \Wc\big(\tmn, \m\big)  ~\le~  \eps_{n}  ~:=~ d/n  +  2^dd(R+1)^{d+1}\kappa_{R+1}(d/n),~ \mbox{ for all } m\ge \lfloor R \rfloor+1.
\ee
\rmii  If $\rho$ is uniformly continuous, i.e. there exists a uniform $\kappa=\kappa_R$ for all $R>0$, then
\be\label{ineq:num1}
~~~ \Wc\big(\tmn, \m\big)  ~\le~  \eps_{m,n}  ~:=~ d/n  + M_{\theta}/m^{\theta-1} + 2^ddm^{d+1}\kappa(d/n).
\ee
\no \rmiii If $\{\bx_{\bq}\}_{\0\neq \bq/n\in \bB_m}$ satisfies $\rho(\bx_{\bq})\le \rho(\bx)$ for all $\bx\in V(\bq/n)$, then
\be\label{ineq:num2}
~~~~~~ \Wc\big(\tmn, \m\big)  ~ \le~ \tau_{m,n} ~:=~ d/n  + M_{\theta}/m^{\theta-1} + \inf_{1\le j\le m} \Big\{2^d d j^{d+1}\kappa_{j}(d/n) + 4M_{\theta}/j^{\theta-1}\Big\}.
\ee
\end{proposition}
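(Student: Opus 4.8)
The plan is to estimate $\Wc(\tmn,\m)$ by inserting the intermediate discrete measure $\mn_m$ and using the triangle inequality, $\Wc(\tmn,\m)\le \Wc(\tmn,\mn_m)+\Wc(\mn_m,\m)$. The second term is already controlled: in case (i) it is $\le d/n$ (since $\m$ is supported in $\bB_R$, truncation is trivial and only the lattice discretization of Step 2 contributes), while in cases (ii) and (iii) it is $\le d/n + M_\theta/m^{\theta-1}$ by the bound stated just before the proposition. So the heart of the matter is bounding $\Wc(\tmn,\mn_m)$, which compares two measures on the \emph{same} finite support $\Om_n\cap\bB_m$ but with slightly different weights: the exact mass $\mn_m[\{\bq/n\}]=\int_{V(\bq/n)}\rho(\bx)\,d\bx$ versus the point estimate $\tmn[\{\bq/n\}]=\rho(\bx_{\bq})/n^d$. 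Since $V(\bq/n)$ has Lebesgue measure $1/n^d$, the weight discrepancy at each cell is $|\mn_m[\{\bq/n\}]-\tmn[\{\bq/n\}]| = |\int_{V(\bq/n)}(\rho(\bx)-\rho(\bx_{\bq}))\,d\bx| \le \kappa_{R+1}(d/n)/n^d$, because any two points of $V(\bq/n)$ are within $\ell_1$-distance $d/n$ and $V(\bq/n)\subset\bB_{R+1}$ for the relevant cells. I would then bound the Wasserstein distance between two discrete measures of (nearly) equal total mass with a crude transport: move all the excess/deficit mass to the atom at $\0$ (or between mismatched atoms), paying at most $\mathrm{diam}(\Om_n\cap\bB_{R+1})\le 2d(R+1)$ per unit of transported mass. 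The total mass that must be moved is the sum of the per-cell discrepancies over the $\le (2(R+1)n+1)^d \lesssim (2(R+1)n)^d$ relevant cells, giving $\le (2(R+1)n)^d \cdot \kappa_{R+1}(d/n)/n^d = (2(R+1))^d\kappa_{R+1}(d/n)$; multiplying by the diameter $2d(R+1)$ produces the $2^{d}d(R+1)^{d+1}\kappa_{R+1}(d/n)$-type term (up to the constant bookkeeping that gives exactly the stated form). This proves (i) with $R$ in place of $m$; since (i) asks for $m\ge \lfloor R\rfloor+1$, the truncation box already contains $\supp(\m)$ and nothing changes.

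For (ii), the argument is identical with $R$ replaced by the truncation level $m$ and $\kappa_{m}$ replaced by the uniform modulus $\kappa$, so the transport-cost term becomes $2^d d m^{d+1}\kappa(d/n)$ and we simply add the truncation error $M_\theta/m^{\theta-1}$; this is exactly \eqref{ineq:num1}. For (iii), the extra hypothesis $\rho(\bx_{\bq})\le\rho(\bx)$ on $V(\bq/n)$ means $\tmn$ \emph{underestimates} every interior mass, so $\tmn$ has a mass deficit that is entirely absorbed at $\0$; I would exploit this one-sided structure to get a sharper bound when $m$ is large. Concretely, for any auxiliary cutoff $j\le m$, split the cells into those inside $\bB_j$ — handled as above, contributing $2^d d j^{d+1}\kappa_j(d/n)$ — and those in $\bB_m\setminus\bB_j$, whose total $\m$-mass is controlled via Markov/Chebyshev by $M_\theta/j^{\theta-1}$ (the $\theta^{\mathrm{th}}$ moment bound), so that transporting the corresponding mismatched mass of both $\tmn$ and $\mn_m$ costs at most a constant multiple of $M_\theta/j^{\theta-1}$; optimizing over $j$ and adding the $\mn_m$-vs-$\m$ error $d/n+M_\theta/m^{\theta-1}$ yields \eqref{ineq:num2}.

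The main obstacle is the bookkeeping in the crude-transport step: turning "move the excess mass to $\0$" into a rigorous Wasserstein bound requires being careful that the two measures $\tmn$ and $\mn_m$ have the same total mass (they do, by construction of their respective atoms at $\0$) and that the mass genuinely routed between distinct atoms — not the part that is common — is what gets multiplied by the diameter; getting the constants to match the stated $2^d d(R+1)^{d+1}$ exactly (rather than merely up to an absolute constant) will take some care with the number of lattice cells meeting $\bB_{R+1}$ and with the case distinction around the atom at $\0$. The moment-truncation estimate in part (iii), coupling the tail decay of $\m$ to the tail mismatch of the discrete measures, is the other delicate point, since one must bound the $\mn_m$-mass of far-away cells as well as the $\m$-mass, but both follow from Assumption \ref{ass:mom} together with $\mn_m[\{\bq/n\}]\le\int_{V(\bq/n)}\rho$.
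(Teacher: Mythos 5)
Your outline follows the same skeleton as the paper's proof: the triangle inequality through the exactly-weighted discretization $\mn_m$, the bounds $\Wc\big(\mn_m,\m\big)\le d/n$ in case {\rm (i)} and $\le d/n+M_\theta/m^{\theta-1}$ in cases {\rm (ii)}--{\rm (iii)}, and the per-cell discrepancy $\big|\mn_m[\{\bq/n\}]-\tmn[\{\bq/n\}]\big|\le\kappa_{R+1}(d/n)/n^d$ summed over roughly $(2(R+1)n)^d$ cells. The one genuine divergence is the last step, and there your argument as written does not reach the stated constants. Paying the diameter $2d(R+1)$ per unit of mismatched mass gives $2^{d+1}d(R+1)^{d+1}\kappa_{R+1}(d/n)$, i.e.\ twice the claimed $2^{d}d(R+1)^{d+1}\kappa_{R+1}(d/n)$; and the cheaper plan you hint at, ``send all excess/deficit to the atom at $\0$'', is not an admissible transport plan in cases {\rm (i)}--{\rm (ii)}, since the per-cell discrepancies can have either sign, so mass must flow between mismatched atoms rather than to $\0$. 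The paper avoids both issues by working on the dual side of \eqref{def:wass_dual}: normalizing $f\in\Lam_1$ so that $f(\0)=0$, one gets $\big|\int f\,d\tmn-\int f\,d\mn_m\big|=\big|\sum_{\0\neq\bq/n}f(\bq/n)\int_{V(\bq/n)}\big(\rho(\bx)-\rho(\bx_{\bq})\big)d\bx\big|$, so each cell's discrepancy is weighted by $|f(\bq/n)|\le|\bq/n|\le d(R+1)$ instead of the diameter, and the atom at $\0$ drops out automatically. In effect this proves $\Wc\big(\tmn,\mn_m\big)\le\sum_{\0\neq\bq/n}|\bq/n|\,\big|\mn_m[\{\bq/n\}]-\tmn[\{\bq/n\}]\big|$, which is the rigorous form of your ``absorb at $\0$'' idea and is exactly what produces the factor $2^d$ you flagged as delicate.

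The same distance-weighting is what makes {\rm (iii)} work. For cells outside $\bB_j$ the one-sided hypothesis gives $0\le\int_{V(\bq/n)}\big(\rho(\bx)-\rho(\bx_{\bq})\big)d\bx\le\int_{V(\bq/n)}\rho(\bx)d\bx$, and the far contribution to be controlled is the \emph{first-moment} tail $\sum_{\bq/n\in\bB_m\setminus\bB_j}|\bq/n|\int_{V(\bq/n)}\rho(\bx)d\bx\le\int_{\bB_j^c}\big(d+|\bx|\big)\rho(\bx)d\bx$, which Assumption \ref{ass:mom} bounds by a multiple of $M_\theta/j^{\theta-1}$ (the paper uses $4M_\theta/j^{\theta-1}$). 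Your phrasing ``total $\m$-mass controlled via Markov by $M_\theta/j^{\theta-1}$'' conflates this with the tail mass, which Markov only bounds by $M_\theta/j^{\theta}$; your final cost claim is correct, but the justification needs the weight $|\bq/n|\lesssim d+|\bx|$ attached to each far cell. With these two repairs --- Lipschitz test functions vanishing at $\0$ in place of the diameter bound, and distance-weighted tails in {\rm (iii)} --- your argument coincides with the paper's proof.
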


\begin{proof}
Under the condition {\rm (i)} one has $\m=\m_m$ as $m\ge \lfloor R \rfloor+1$, and thus $\Wc\big(\m_m^{(n)},\m\big)=\Wc\big(\m_m^{(n)},\m_m\big)\le d/n$. Note also $\supp\big(\m_m^{(n)}\big)$, $\supp\big(\tmn\big) \sb \bB_{\lfloor R \rfloor+1}$ by definition. For any $f\in\Lambda_1$, it holds for $m\ge \lfloor R \rfloor+1$
\b*
&& \left | \int  f d\tmn - \int f d\m^{(n)}_m  \right | 
~~=~~ \left | \sum_{\0\neq \bq/n \in \bB_{\lfloor R \rfloor+1}} f(\bq/n)\int_{V(\bq/n)} \Big(\rho(\bx)-\rho(\bx_{\bq})\Big) d\bx\right | \\
&& \le~~ \left | \sum_{\0\neq \bq/n \in \bB_{\lfloor R \rfloor+1}} |\bq/n| \int_{V(\bq/n)} \kappa_{R+1}(d/n) d\bx\right | ~~\le~~ 2^dd(R+1)^{d+1}\kappa_{R+1}(d/n),
\e*
which yields $\Wc\big(\tmn, \m_m^{(n)}\big)  \le 2^dd(R+1)^{d+1}\kappa_{R+1}(d/n)$ and further \eqref{ineq:num0}. As for {\rm (ii)}, we deduce $\Wc\big(\tmn, \m_m^{(n)}\big)  \le 2^ddm^{d+1}\kappa(d/n)$ using the same arguments for $\bB_m$, and obtain thus \eqref{ineq:num1}. Alternatively, assume that the third condition holds. For each integer $1\le j\le m$, one has
\b*
&&\left | \int f d\tmn - \int f d\m^{(n)}_m  \right | \\
&\le& \left | \sum_{\0\neq \bq/n \in \bB_j} |\bq/n| \int_{V(\bq/n)} \Big(\rho(\bx)-\rho(\bx_{\bq})\Big) d\bx\right | +  \left | \sum_{\bq/n \in \bB_m\setminus B_j} |\bq/n| \int_{V(\bq/n)} \Big(\rho(\bx)-\rho(\bx_{\bq})\Big) d\bx\right |  \\
&\le& 2^d d j^{d+1}\kappa_{j}(d/n) + 2\int_{\bB_{j}^c} \big(d+|\bx|\big) \rho(\bx)d\bx ~~\le~~ 2^dd j^{d+1}\kappa_{j}(d/n) + 4M_{\theta}/j^{\theta-1}.
\e*
Thus $\Wc\big(\tmn, \m_m^{(n)}\big) \le \inf_{1\le j\le m} \big\{2^dd j^{d+1}\kappa_{j}(d/n) + 4M_{\theta}/j^{\theta-1}\big\}$ follows and \eqref{ineq:num2} is derived.
\end{proof}

By a straightforward computation, one has $\lim_{m\to\infty}\lim_{n\to\infty} \eps_{m,n}= \lim_{m\to\infty}\lim_{n\to\infty} \tau_{m,n}=0$. In consequence, there exist suitable sequences $(m_n)_{n\ge 1}$ and $(n_m)_{m\ge 1}$, such that 
\b*
\lim_{n\to\infty}\eps_{m_n,n}=\lim_{n\to\infty}\tau_{m_n,n}=0 &\mbox{and}& \lim_{m\to\infty}\eps_{m,n_m}= \lim_{m\to\infty}\tau_{m,n_m}=0.
\e*
Note that the previous choice $m_n:=\lfloor R_n \rfloor$ may not yield $\lim_{n\to\infty} \eps_{\lfloor R_n \rfloor,n}= \lim_{n\to\infty} \tau_{\lfloor R_n \rfloor,n}=0$ and these sequences have to be  computed from $\rho$. However, if $\rho$ is $L-$Lipschitz, then one has $\eps_{m,n}=d/n  + M_{\theta}/m^{\theta-1} + 2^dd^2m^{d+1}L/n$ and we deduce that it suffices to take $(m_n)_{n\ge 1}$ or $(n_m)_{m\ge 1}$ such that $\lim_{n\to\infty}m_n^{d+1}/n=0$ or $\lim_{m\to\infty}m^{d+1}/n_m=0$. 

Putting everything together, taking respectively $\m_k$ in the place of $\m$ for $k=1, \ldots, N$, the above procedures yield a vector of measures $\tilde{\bm}^{(n)}_m=\big(\tilde{\m}_{k,m}^{(n)}\big)_{1\le k\le N}$. Under the conditions in Proposition \ref{prop:estimation}, we have $\lim_{n\to\infty}\pp_{N\eps_{m_n,n}}\big(\tilde{\bm}^{(n)}_{m_n}\big) = \lim_{n\to\infty}\pp_{N\tau_{m_n,n}}\big(\tilde{\bm}^{(n)}_{m_n}\big)= \pp(\bm)$ or $\lim_{m\to\infty}\pp_{N\eps_{m,n_m}}\big(\tilde{\bm}^{(n_m)}_{m}\big) = \lim_{m\to\infty}\pp_{N\tau_{m,n_m}}\big(\tilde{\bm}^{(n_m)}_{m}\big)= \pp(\bm)$.

\subsection{Random discretization}
\label{ssec:rand}
We consider now a different discretization procedure, which applies to the case where one has a black box to generate independent random variables according to $\m$. Provided a sequence of i.i.d. $\mu-$distributed random variables $(X_n)_{n\ge 1}$, define the empirical measure $\hm$ by 
\b*
\hm(d\bx) ~:=~\sum_{i=1}^n \frac{1}{n}\delta_{X_i}(d\bx).\e*
By definition $\hm$ is a random measure, and following  Glivenko-Cantelli's theorem, see e.g. Fournier and Guillin \cite{FG}, $\lim_{n\to\infty}\Wc(\hm,\m)=0$ almost surely and $\lim_{n\to\infty}\E\big[\Wc(\hm,\m)\big]=0$. Construct random measures $\hm_k$ by replacing $\m$ by $\m_k$ for $k=1,\ldots, N$ and set $\hbm:=(\hm_k)_{1\le k\le N}$. Compared to Theorem \ref{thm:general}, we now obtain a stochastic convergence result. 

\begin{proposition}\label{prop:rand}
Let the conditions of Theorem \ref{thm:general} hold. Given a sequence $(\eps_m)_{m\ge 1}\subset (0,\infty)$ converging to zero, one has $\lim_{m\to\infty} \lim_{n\to\infty} \pp_{\eps_m}(\hbm) = \pp(\bm)$  almost surely. Further, for any subsequence $(\hnm)_{m\ge 1}$ such that $\sum_{m\ge 1} \E\big[\Wc^{\oplus}(\hat{\bm}^{\hnm},\bm)\big]/\eps_m <\infty$, it holds almost surely $\lim_{m\to\infty} \pp_{\eps_m}(\hat{\bm}^{\hnm})=\pp(\bm)$.\end{proposition}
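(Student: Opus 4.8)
\emph{Proof proposal.} The plan is to derive both statements from Theorem~\ref{thm:general}, applied pathwise along the sample realization and fed by the Glivenko--Cantelli convergence of the empirical marginals recalled above; the only point requiring care is the first assertion, where the relaxation level $\eps_m$ stays fixed while $n\to\infty$. I would work on the full-probability event on which $r_n:=\Wc^{\oplus}(\hbm,\bm)\to0$ as $n\to\infty$ (Glivenko--Cantelli applied to each of the $N$ marginals, using Assumption~\ref{ass:mom}); on this event $\hbm\in\Pc^{\pq}_{r_n}$ for every $n$ by the first assertion of Theorem~\ref{thm:general}. For the lower bound in (i), fix $m$ and apply Theorem~\ref{thm:general}(i) to $(\hbm)_{n\ge1}$ with the levels $\tilde\eps_n:=r_n+1/n$ (so $\tilde\eps_n\ge r_n$ and $\tilde\eps_n\to0$): this gives $\pp_{\tilde\eps_n}(\hbm)\to\pp(\bm)$. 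Since $\tilde\eps_n\le\eps_m$ for all large $n$ one has $\Mc_{\tilde\eps_n}(\hbm)\sb\Mc_{\eps_m}(\hbm)$, hence $\pp_{\eps_m}(\hbm)\ge\pp_{\tilde\eps_n}(\hbm)$, and letting $n\to\infty$ yields $\liminf_n\pp_{\eps_m}(\hbm)\ge\pp(\bm)$.

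For the matching upper bound I would take optimizers $\P_n$ of $\pp_{\eps_m}(\hbm)$, which exist for $n$ large since $\Mc_{\eps_m}(\hbm)$ is then a nonempty weakly compact convex set and $c$ is integrable against its (fixed) marginals, exactly as in Theorem~\ref{thm:general}(ii). As $\hm_k\to\m_k$ in $\Wc$ for each $k$, the marginals are tight with uniformly integrable first absolute moments, so $(\P_n)_{n}$ is tight; along a weakly convergent subsequence $\P_{n_j}\Rightarrow\P_\infty$, uniform integrability of the increments $\bS_{k+1}-\bS_k$ and of the Lipschitz payoff $c$ passes the marginal constraints, the inequalities~\eqref{def:rtp} at level $\eps_m$, and the value $\E_{\P_{n_j}}[c]$ to the limit, so that $\P_\infty\in\Mc_{\eps_m}(\bm)$ and $\limsup_n\pp_{\eps_m}(\hbm)\le\E_{\P_\infty}[c]\le\pp_{\eps_m}(\bm)$. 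Thus $\pp(\bm)\le\liminf_n\pp_{\eps_m}(\hbm)\le\limsup_n\pp_{\eps_m}(\hbm)\le\pp_{\eps_m}(\bm)$ for every $m$; since Theorem~\ref{thm:general}(i) with the constant sequence $\bm^n\equiv\bm$ (hence $r_n=0$) gives $\pp_{\eps_m}(\bm)\to\pp(\bm)$ as $m\to\infty$, sending $m\to\infty$ shows that the iterated limits in $n$ and then in $m$ exist and equal $\pp(\bm)$.

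For assertion (ii) I would set $\rho_m:=\Wc^{\oplus}(\hat{\bm}^{\hnm},\bm)$. The hypothesis $\sum_m\E[\rho_m]/\eps_m<\infty$ together with Markov's inequality gives $\sum_m\P(\rho_m>\eps_m)\le\sum_m\E[\rho_m]/\eps_m<\infty$, so by Borel--Cantelli, almost surely $\rho_m\le\eps_m$ for all $m$ large, hence (as $\eps_m\to0$) $\rho_m\to0$, i.e.\ $\hat{\bm}^{\hnm}\to\bm$ under $\Wc^{\oplus}$. Putting $\bar\eps_m:=\max(\eps_m,\rho_m)$ --- so $\bar\eps_m\ge\rho_m$ for all $m$, $\bar\eps_m\to0$, and $\bar\eps_m=\eps_m$ eventually --- Theorem~\ref{thm:general}(i) applied to $(\hat{\bm}^{\hnm})_{m\ge1}$ with the levels $(\bar\eps_m)$ yields $\pp_{\bar\eps_m}(\hat{\bm}^{\hnm})\to\pp(\bm)$ almost surely, which is the claim since $\pp_{\bar\eps_m}(\hat{\bm}^{\hnm})=\pp_{\eps_m}(\hat{\bm}^{\hnm})$ for large $m$.

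The main obstacle is precisely the first assertion: for $\eps_m>0$ held fixed, $\pp_{\eps_m}(\hbm)$ does not converge to $\pp(\bm)$ as $n\to\infty$ but only to a value trapped in $[\pp(\bm),\pp_{\eps_m}(\bm)]$, and $\pp_{\eps_m}(\bm)>\pp(\bm)$ in general --- so one cannot invoke Theorem~\ref{thm:general} directly and must instead squeeze in $n$ first and only afterwards let $\eps_m\downarrow0$, relying on the continuity of $\eps\mapsto\pp_\eps(\bm)$ at the origin (the $\eps=0$ case of Theorem~\ref{thm:general}(i)). The remaining ingredients --- tightness, weak-limit passage in the $\eps$-martingale inequalities, and the Borel--Cantelli step --- are routine.
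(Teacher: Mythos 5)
Your proof is correct and follows essentially the same route as the paper: for fixed $\eps_m$ you squeeze $\pp_{\eps_m}(\hbm)$ between $\pp(\bm)$ and $\pp_{\eps_m}(\bm)$ as $n\to\infty$ using stability in the marginals (you re-derive the upper bound via the compactness/semicontinuity argument underlying Proposition \ref{prop:rusc}, where the paper simply invokes the quantitative inequality of Corollary \ref{cor1}), and then let $m\to\infty$ via continuity of $\eps\mapsto\pp_{\eps}(\bm)$ at $0$. For the subsequence claim you use the identical Markov--Borel--Cantelli estimate on the events $\big\{\Wc^{\oplus}\big(\hat{\bm}^{\hnm},\bm\big)>\eps_m\big\}$, followed by a pathwise application of Theorem \ref{thm:general}, which matches the paper's argument.
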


\begin{proof}
For each fixed $\eps_m>0$, one has the inequality below by Corollary \ref{cor1}  
\be\label{ineq:rand}
~~~~~~ \big | \pp_{\eps_m}(\hbm) - \pp(\bm)\big| ~\le ~ {\li}(c)\eps_m + \pp_{2\eps_m}(\bm) - \pp(\bm)  \mbox{ whenever } \Wc^{\oplus}(\hbm,\bm)\le\eps_m,
\ee
where we recall that ${\li}(c)$ is the Lipschitz constant of $c$. Applying Glivenko-Cantelli's theorem, one obtains $
\lim_{n\to\infty} \big |\pp_{\eps_m}(\hbm) - \pp(\bm) \big| \le  {\li}(c)\eps_m + \pp_{2\eps_m}(\bm) - \pp(\bm)=:\delta_m$ almost surely. Using Proposition \ref{prop:rusc} we have $\lim_{m\to\infty}\pp_{2\eps_m}(\bm) =\pp(\bm)$ and the first asserted convergence result follows. For any $\delta>0$, there exists $m_{\delta}$ such that $\eps_m\le \delta$ for all $m\ge m_{\delta}$. Taking $\hnm$ as in the statement, we have  
\b*
\sum_{m\ge m_{\delta}} \P\Big[\big|\pp_{\eps_m}(\hat{\bm}^{\hnm}) - \pp(\bm)\big|>\delta\Big] 
\le  \sum_{m\ge m_{\delta}} \P\Big[\Wc^{\oplus}\big(\hat{\bm}^{\hnm}, \bm\big)>\eps_m\Big]  \le 
\sum_{m\ge m_{\delta}} \E\big[\Wc^{\oplus}(\hat{\bm}^{\hnm},\bm)\big]/\eps_m,
\e*
which, by Borel-Cantelli's lemma, implies that $\lim_{m\to\infty} \pp_{\eps_m}(\hat{\bm}^{\hnm})=\pp(\bm)$ almost surely.
\end{proof}

Roughly speaking, given $\eps_m>0$, it suffices to focus on the \textsf{LP} problems $\pp_{\eps_m}(\hbm)$  such that $\Wc^{\oplus}(\hbm,\bm)\le \eps_m$ occurs with high probability. 
To this end, and in order to choose suitable sequences $(\eps_m)_{m\ge 1}$ and $(\hnm)_{m\ge 1}$, we need to quantify $\E\big[\Wc^{\oplus}(\hbm,\bm)\big]$. Fortunately, Theorem 1 in \cite{FG} provides such an estimation under Assumption \ref{ass:mom}, albeit with some cases omitted, e.g. $d=1, 2$ and $\theta=2$. For the sake of completeness, we state this result as Lemma \ref{prop:sto_disc} by taking all the cases into account.

\begin{lemma}[\cite{FG}]\label{prop:sto_disc}
Let Assumption \ref{ass:mom} hold. There exists  $C(\theta,d)>0$ such that $\E\big[\Wc^{\oplus}\big(\hbm,\bm\big)\big]  \le \chi(n)$ for all $n\ge 1$, where
\b*
\chi(n) ~:=~ NC(\theta, d) \begin{cases}
n^{1/\theta-1} &~   \mbox{ if }  d=1 \mbox{ and } 1<\theta< 2, \\
(1+\log n)  n^{-1/2} &~   \mbox{ if }  d=1 \mbox{ and } \theta = 2, \\
n^{-1/2} &~   \mbox{ if }  d=1 \mbox{ and } \theta > 2, \\
n^{1/\theta-1} &~   \mbox{ if }  d=2 \mbox{ and } 1<\theta< 2, \\ 
\big(1+(\log n)^2 \big) n^{-1/2}  &~   \mbox{ if }  d=2 \mbox{ and } \theta = 2, \\ 
(1+\log n) n^{-1/2}  &~   \mbox{ if }  d=2 \mbox{ and } \theta > 2, \\ 
n^{1/\theta-1} &~  \mbox{ if }  d\ge 3 \mbox{ and } 1<\theta < d/(d-1), \\
(1+\log n) n^{-1/d}  &~  \mbox{ if }  d\ge 3 \mbox{ and } \theta = d/(d-1), \\
n^{-1/d} &~  \mbox{ if }  d\ge 3 \mbox{ and } \theta > d/(d-1).
\end{cases}
\e*
In consequence, one has $\P\big[\Wc^{\oplus}\big(\hbm,\bm\big)> \eps_m\big]  \le  \chi(n)/\eps_m$. 
\end{lemma}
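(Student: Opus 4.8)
This statement is, in essence, a corollary of \cite[Theorem~1]{FG}, and the plan is to split the proof into three pieces: reduce the product distance to a single marginal, quote \cite{FG} at the non-critical moment exponents, and supply by hand the three boundary exponents that \cite{FG} explicitly leaves out.

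First I would use the identity $\Wc^{\oplus}(\hbm,\bm)=\sum_{k=1}^N\Wc(\hm_k,\m_k)$ together with the remark following Assumption~\ref{ass:mom} that $\int_{\R^d}|\bx|^{\theta}\m_k(d\bx)\le M_{\theta}$ for every $k$. Thus it is enough to show that any probability measure $\m$ on $\R^d$ with $\int|\bx|^{\theta}d\m\le M_{\theta}$ satisfies $\E[\Wc(\hm,\m)]\le C(\theta,d)\,\psi_{d,\theta}(n)$, where $\hm$ is the empirical measure of $n$ i.i.d.\ $\m$-samples and $\psi_{d,\theta}$ is the relevant one of the nine rates; summing over $k$ then produces the factor $N$ in $\chi$. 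Since $\Wc=W_1$, the tool is \cite[Theorem~1]{FG} applied with $p=1$ and $q=\theta>1=p$.

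For $\theta$ away from the critical values --- i.e.\ $\theta\neq2$ when $d\in\{1,2\}$ and $\theta\neq d/(d-1)$ when $d\ge3$ --- \cite[Theorem~1]{FG} applies directly and gives, with a constant depending only on $(1,d,\theta)$,
\b*
\E\big[\Wc(\hm,\m)\big] ~\le~ C\, M_{\theta}^{1/\theta}\big(\varrho_d(n)+n^{-(\theta-1)/\theta}\big),
\e*
where $\varrho_1(n)=n^{-1/2}$ (the regime $p>d/2$), $\varrho_2(n)=(1+\log n)\,n^{-1/2}$ (the regime $p=d/2$) and $\varrho_d(n)=n^{-1/d}$ for $d\ge3$ (the regime $p<d/2$). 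Comparing the two competing powers --- $n^{-(\theta-1)/\theta}$ decays more slowly than $n^{-1/2}$ exactly when $\theta<2$, and more slowly than $n^{-1/d}$ exactly when $\theta<d/(d-1)$ --- reproduces the six non-critical lines of $\chi(n)$; absorbing $M_{\theta}^{1/\theta}$ into the constant gives $C(\theta,d)$.

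The remaining three lines, $\theta=2$ with $d=1$, $\theta=2$ with $d=2$, and $\theta=d/(d-1)$ with $d\ge3$, are precisely the cases \cite{FG} excludes, and this is the only genuinely delicate point. Here I would invoke \cite[Theorem~1]{FG} at a strictly sub-critical exponent $\theta'\in(1,\theta)$ --- admissible because $M_{\theta}<\infty$ forces $\int|\bx|^{\theta'}d\m\le 1+M_{\theta}$ by Jensen --- and then optimise over $\theta'$. The hard part is controlling the degeneration of the \cite{FG} constant $C(1,d,\theta')$ as $\theta'\uparrow\theta$: it blows up at worst polynomially in $(\theta-\theta')^{-1}$, so taking $\theta-\theta'$ of order $1/\log n$ converts $n^{-(\theta'-1)/\theta'}$ into $O(n^{-(\theta-1)/\theta})$ at the cost of a power of $\log n$; combined with the $\log$ already carried by $\varrho_2$ this yields the stated rates, the exponent $2$ on $\log n$ in the line $d=2,\theta=2$ being a deliberately generous over-estimate. (Alternatively one re-runs the dyadic block decomposition inside the proof of \cite[Theorem~1]{FG}: the geometric series over scales that is summable off the boundary becomes logarithmically divergent on it, contributing exactly one extra $\log$ factor, again dominated by the corresponding line of $\chi$.) Summing the single-marginal bounds over $k=1,\dots,N$ then gives $\E[\Wc^{\oplus}(\hbm,\bm)]\le\chi(n)$, and the last assertion is just Markov's inequality: $\P[\Wc^{\oplus}(\hbm,\bm)>\eps_m]\le\E[\Wc^{\oplus}(\hbm,\bm)]/\eps_m\le\chi(n)/\eps_m$.
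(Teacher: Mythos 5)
Your overall architecture --- reducing $\Wc^{\oplus}$ to a single marginal via $\Wc^{\oplus}(\hbm,\bm)=\sum_{k}\Wc(\hm_k,\m_k)$ and the uniform moment bound, invoking the rates of \cite{FG} with $p=1$, $q=\theta$, and finishing with Markov's inequality --- matches the paper, and your handling of the six non-critical lines by direct citation of Theorem 1 of \cite{FG} is fine. The genuine divergence is in the three boundary cases and in what the lemma is for. The paper never uses Theorem 1 of \cite{FG} as a black box: it re-runs the dyadic estimate of \cite{FG} (their Lemmas 5 and 6) and evaluates the double sum $\sum_{i\ge 0}2^{i}\sum_{j\ge 0}2^{-j}\min\big(\eps_i,2^{dj/2}(\eps_i/n)^{1/2}\big)$ with $\eps_i=2^{-\theta i}$ explicitly in all nine cases; the critical lines fall out automatically because the outer geometric series is exactly logarithmically divergent there, and --- crucially --- this yields explicit numerical constants $C(\theta,d)$, which is the stated purpose of the appendix (the remark following the lemma, and the examples that use $C(3,1)$, $C(3,2)$, $C(2,3)$, require explicit values). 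Your alternative route (b) is precisely this argument, only sketched. Your primary route (a) --- applying \cite{FG} at a sub-critical exponent $\theta'$ and taking $\theta-\theta'$ of order $1/\log n$ --- would indeed deliver the stated rates, but as written it contains a gap: the assertion that the constant $C(1,d,\theta')$ blows up at worst polynomially in $(\theta-\theta')^{-1}$ (in fact linearly, e.g.\ like $(2^{1-\theta'/2}-1)^{-1}$ when $d=1$) is not part of the statement of Theorem 1 in \cite{FG}, whose constant has unspecified dependence on the moment exponent; substantiating it requires going inside the proof of \cite{FG}, at which point one may as well carry out route (b), i.e.\ the paper's computation, directly. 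So the proposal reaches the existence-level conclusion correctly provided you either justify the constant blow-up or fall back on (b); if the constants are to be usable downstream, as in the paper's numerical scheme, the explicit computation is indispensable.
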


We remark that, the constants $C(\theta, d)$ are not explicitly specified in \cite{FG}. To implement our scheme we need to determine $\hnm$ and for this we have to compute explicitly $C(\theta, d)$.  This is possible, following largely the arguments in \cite{FG}, but tedious and is postponed to Appendix  \ref{sec:append}.

\subsection{Numerical examples}\label{ssec:example}

We discuss now some concrete \textsf{MOT} problems to illustrate how Theorem \ref{thm:general}, together with our discretization schemes, can be applied. Most of our examples admit either a closed-form optimizer or an analytical characterization thereof, which allow us to verify the numerical results. We recall that for $d=1$, we write simply $\bx=x$ and $\bS_k=S_k$.

\begin{example}\label{ex1}
Beiglb\"ock and Juillet studied in \cite{BJ} a specific \textsf{MOT} problem in the case of $N=2$, $d=1$ and $c(x,y)=h(x-y)$, where $h:\R\to\R$ has a strictly convex  derivative. Let $(\m,\n)\in\Pc^{\pq}$.  Theorem 1.7 of  \cite{BJ} shows that, if $\m$ has a density $\rho$, then there exist two measurable functions  $\xi_{\pm}:\R\to\R$ such that the unique optimizer $\P^*\in\Mc(\m,\n)$ for $\pp(\m,\n)$ is supported on $\xi_{\pm}$, i.e. 
\b*
\P^*(dx,dy)~ = ~ \m(dx)\ox \left\{\frac{x-\xi_-(x)}{\xi_+(x)-\xi_-(x)} \delta_{\xi_{+}(x)} (dy) + \frac{\xi_+(x)-x}{\xi_+(x)-\xi_-(x)} \delta_{\xi_{-}(x)}(dy) \right\},
\e*
where  $\xi_-(x)\le x\le \xi_+(x)$, $\xi_+(x)<\xi_+(x')$ and $\xi_-(x')\notin \big(\xi_-(x),\xi_+(x)\big)$ for all $x$, $x'\in\R$ with $x<x'$. We want to illustrate numerically the above result. Let $\rho$ be a truncated Gamma function defined by
\b*
\rho(x) ~:=~ {\mathds 1}_{[0,1]}(x) x^{3/2}e^{-x} /C,~ \mbox{ where } C:= \int_0^1 x^{3/2}e^{-x}dx > 1/5.
\e*
Next construct $\n(dx)=\sigma(y)dy$ by 
\b*
\sigma(y)~:=~\rho(y/2)/6 + 4\rho(2y)/3 ~=~  {\mathds 1}_{[0,2]}(y) (y/2)^{3/2}e^{-y/2} /C+ {\mathds 1}_{[0,1/2]}(y) (2y)^{3/2}e^{-2y} /C.
\e*
By the construction, one has $(\m,\n)\in\Pc^{\pq}$, $\supp(\m)=[0,1]$ and $\supp(\n)=[0,2]$. 
Further, one can verify that $\rho$ and $\sigma$ are $L-$Lipschitz on $\supp(\m)$ and $\supp(\n)$ with $L=7$. 
Applying the discretization of Section \ref{ssec:deter} to $\m$ and $\n$, we obtain $\tn$ and $\tm$, supported on $\big\{i/n: 0\le i< n \big\}$ and $\big\{j/n: 0\le j< 2n \big\}$, and defined by 
\b* 
\tn\big[\big\{i/n\big\}\big] &:=& \begin{cases}
1- \sum_{k=1}^{n-1} \rho(x_k)/n &  \mbox{ if }  i=0, \\
\rho(x_i)/n &  \mbox{ if }  1\le i < n,
\end{cases} \\
 \tm\big[\big\{j/n\big\}\big] &:=& \begin{cases}
1- \sum_{k=1}^{2n-1} \sigma(y_k)/n &  \mbox{ if }  j=0, \\
\sigma(y_j)/n & \mbox{ if }  1 \le j< 2n,
\end{cases}
 \e* 
 where $x_i\in \big[i/n,(i+1)/n\big)$ and $y_j\in \big[j/n,(j+1)/n\big)$ for $i=1,\ldots, n-1$ and $j=1,\ldots, 2n-1$. It follows from Proposition \ref{prop:estimation} that $
 \Wc^{\oplus}\big((\tn,\tm),(\m,\n)\big) \le (3L+2)/n=:\eps_n$. 
Then the corresponding \textsf{LP} problem is as follows:
\begin{align*}
 \max_{(p_{i,j}) \in\R_+^{2n^2}}~ \sum_{i=0}^{n-1}  \sum_{j=0}^{2n-1} p_{i,j}h\big((i-j)/n\big)
\q \mbox{s.t. } \q & \sum_{j=0}^{2n-1}  p_{i,j} ~=~ \alpha^n_i, ~ \mbox{ for } i=0,\ldots, n-1,\\
& \sum_{i=0}^{n-1} p_{i,j} ~=~\beta^n_j,~ \mbox{ for } j=0,\ldots, 2n-1,\\
  & \sum_{i=0}^{n-1} \left|\sum_{j=0}^{2n-1}  p_{i,j}j/n - \alpha_i^n i/n\right|~\le~  \eps_n,
  \end{align*}
  where $\alpha^n_i:=\tn\big[\big\{i/n\big\}\big]$ and $\beta^n_j:=\tm\big[\big\{j/n\big\}\big]$.
Taking $h(x):=e^x$, we solve the \textsf{LP}  problem using \textsf{Gurobi} solver and present the results in  Figure \ref{fg:BJ}. 
\begin{figure}[h]
  \begin{minipage}[b]{0.45\textwidth}
\includegraphics[width=\textwidth,height=0.68\textwidth]{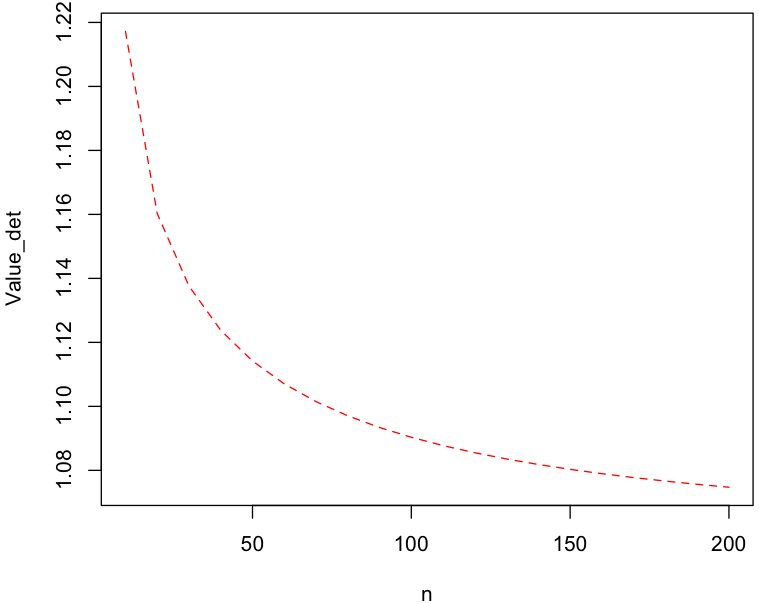}
  \end{minipage}
    \hfill
  \begin{minipage}[b]{0.45\textwidth}  \includegraphics[width=\textwidth,height=0.67\textwidth]{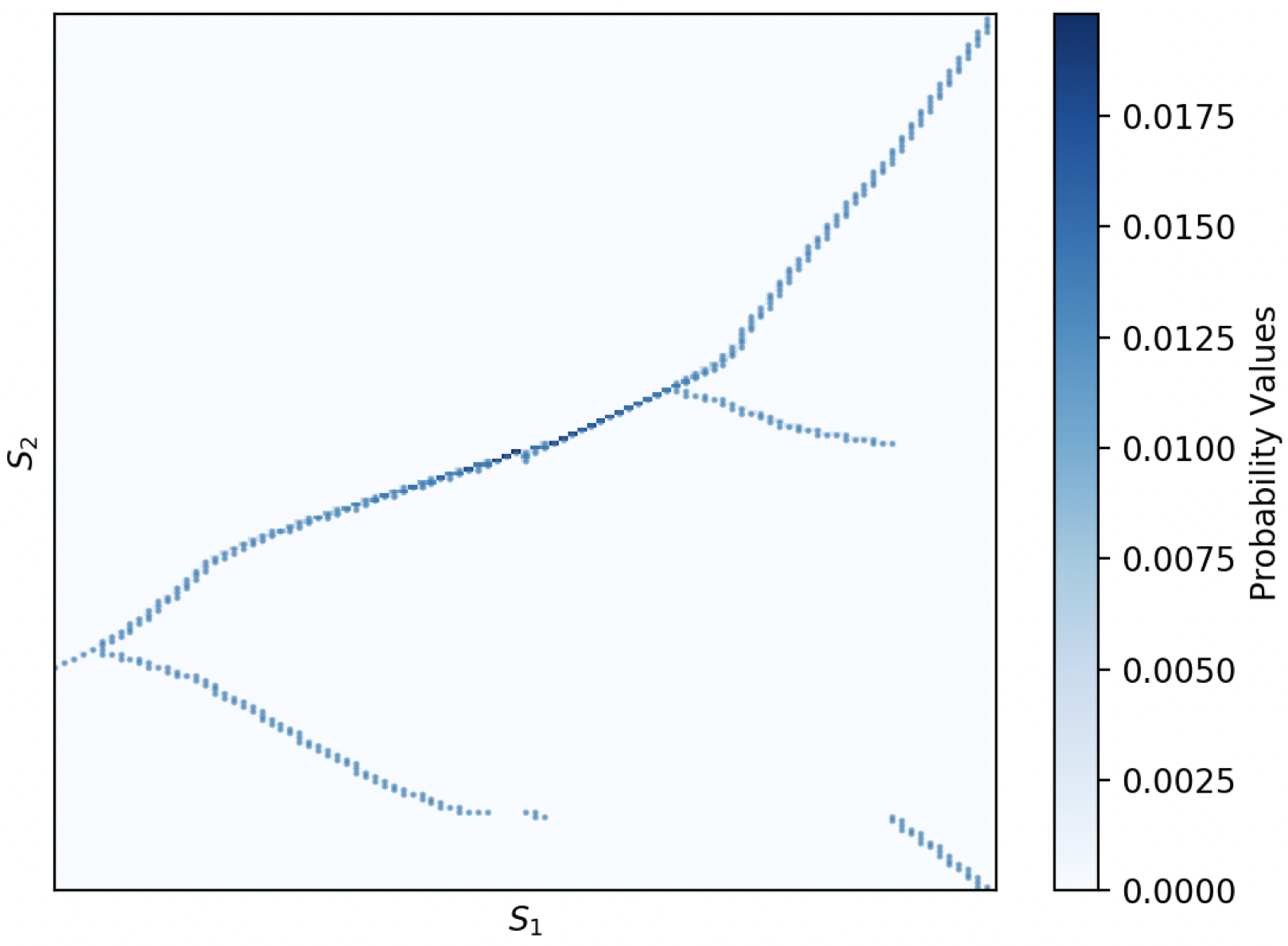}
  \end{minipage}
  \caption{Computations for Example \ref{ex1}. The first pane shows the values $\pp_{\eps_n}(\tn,\tm)$ for $10\le n\le 200$. The second pane draws the heat map of the optimizer for $n=100$. }\label{fg:BJ}
\end{figure}
The left pane exhibits the values $\pp_{\eps_n}(\tn,\tm)$ for $10\le n\le 200$, which shows numerically  the convergence of $\pp_{\eps_n}(\tn,\tm)$. The right pane displays the heat map of the optimizer $(p^*_{i,j})$ for $n=100$. We see that the strictly positive weights $p^*_{i,j}$ are concentrating around two curves that satisfy the conditions of $\xi_{\pm}$ stated above. 

For comparison, we adopt now the random discretization developed in Section \ref{ssec:rand}. We sample, using an accept-reject algorithm, two sequences of i.i.d.\ random variables $(X_i)_{1\le i\le n}$ and $(Y_j)_{1\le j\le n}$ from $\m$ and $\n$ respectively. Let $\Xc$ and $\Yc$ be two sets containing respectively all values taken by $X_i$ and $Y_j$, and we relabel $\Xc$ and $\Yc$ by $\Xc := \big\{X^{1}, X^2, \cdots, X^{\#\Xc}\big\}$ and $\Yc := \big\{Y^{1}, Y^2, \cdots, Y^{\#\Yc}\big\}$, where $\#\Xc$, $\#\Yc \le n$ denote the cardinal of $\Xc$ and $\Yc$. Define further $\hm(dx):=\sum_{i=1}^{\#\Xc}\hat{\alpha}^n_i\delta_{X^i}(dx)$ and $\hn(dy):=\sum_{j=1}^{\#\Yc}\hat{\beta}^n_i\delta_{Y^j}(dy)$, where
\b*
\hat{\alpha}^n_i ~:=~ \frac{\#\Xc_i}{n} \mbox{ for } i=1,\ldots, \#\Xc &\mbox{and}& \hat{\beta}^n_j ~:=~ \frac{\#\Yc_j}{n} \mbox{ for } j=1,\ldots, \#\Yc,
\e*
with $\Xc_i:=\big\{X_k\in\Xc: X_k=X^i\big\}$ and $\Yc_j:=\big\{X_k\in\Yc: X_k=Y^j\big\}$. The \textsf{LP} problem $\pp_{\eps_m}(\hm,\hn)$ 
is given by
\begin{align*}
 \max_{(p_{i,j})\in\R_+^{\#\Xc\#\Yc}}~ \sum_{i=1}^{\#\Xc}\sum_{j=1}^{\#\Yc} p_{i,j}h(X^i - Y^j) 
\q \mbox{s.t. } \q &  \sum_{j=1}^{\#\Yc} p_{i,j} ~=~ \hat{\alpha}^n_i,~ \mbox{ for } i=1,\ldots, \#\Xc,\\
&  \sum_{i=1}^{\#\Xc} p_{i,j} ~=~ \hat{\beta}^n_j,~  \mbox{ for }  j=1,\ldots, \#\Yc,\\
  &\sum_{i=1}^{\#\Xc} \left| \sum_{j=1}^{\#\Yc} p_{i,j}Y^j - \hat{\alpha}^n_iX^i \right| ~\le~ \eps_m.
 \end{align*}
 Notice that $\n$ admits a finite $\theta^{\rm th}-$moment for all $\theta>1$. With $\theta=3$, one has $\chi(n)= 2C(3,1)n^{-1/2}$, where $C(3,1)$ is defined in Proposition \ref{prop:sto_disc}. We set $\hnm:=\lfloor m^{r} \rfloor$ so that $\sum_{m\ge 1} \chi(\hnm)/\eps_m <\infty$ whenever $r>4$, and hence $\lim_{m\to\infty}\pp_{\eps_m}(\hat{\m}^{\hnm}, \hat{\n}^{\hnm})=\pp(\m, \n)$ holds almost surely. Taking $r=4.1$, we compute $\pp_{\eps_m}(\hat{\m}^{\hnm}, \hat{\n}^{\hnm})$ and present the results in Figure \ref{fg:BJ_sto}. 
 \begin{figure}[h]
  \begin{minipage}[b]{0.47\textwidth}
\includegraphics[width=\textwidth,height=0.68\textwidth]{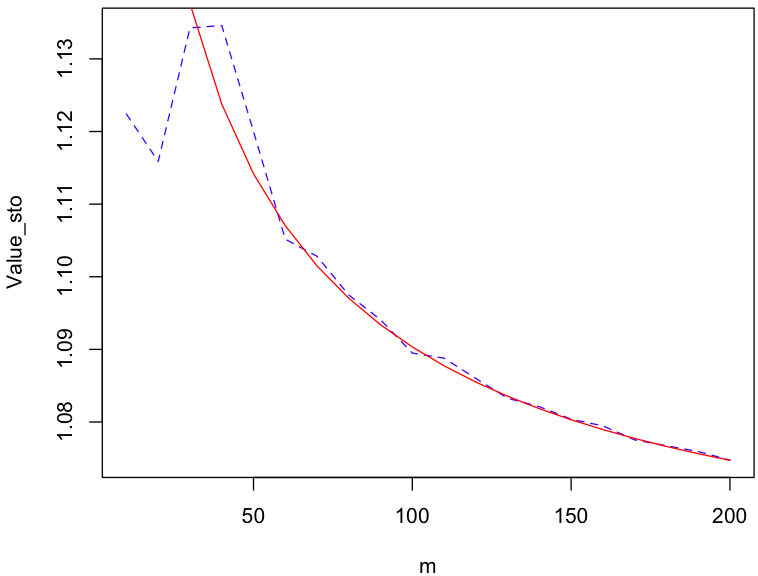}
  \end{minipage}
    \hfill
  \begin{minipage}[b]{0.47\textwidth}  \includegraphics[width=\textwidth,height=0.67\textwidth]{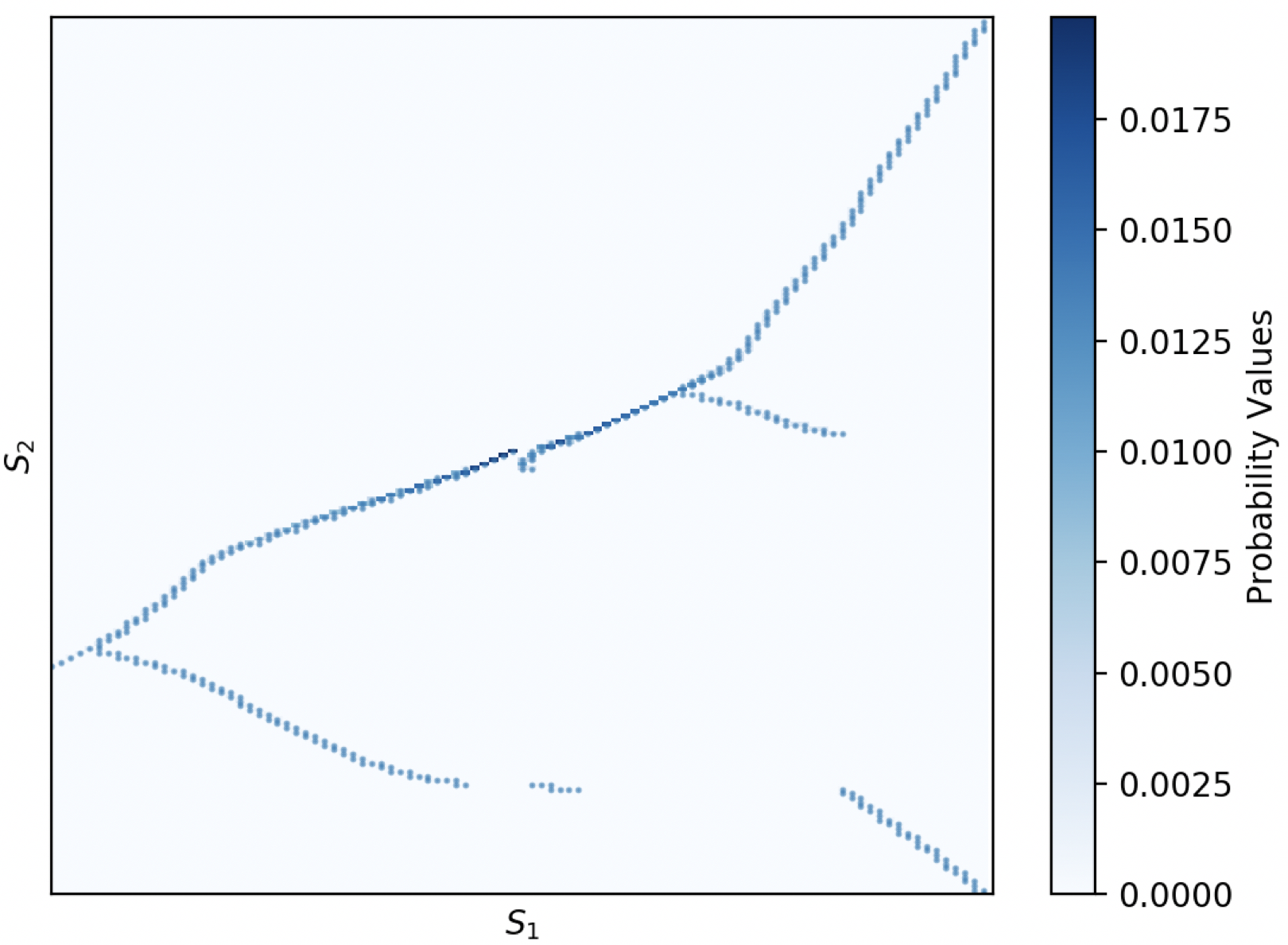}
  \end{minipage}
  \caption{Computations for Example \ref{ex1}. The first pane shows the values $\pp_{\eps_m}(\hat{\m}^{\hnm}, \hat{\n}^{\hnm})$ (dashed line) for $10\le m\le 200$. The second pane draws the heat map of the optimizer for $m=100$.}\label{fg:BJ_sto}
\end{figure}
The blue line in the left pane shows the convergence of $\pp_{\eps_m}(\hat{\m}^{\hnm}, \hat{\n}^{\hnm})$ in $m$ while the red line reproduces the convergence of $\pp_{\eps_m}(\tilde{\m}^{(m)},\tilde{\n}^{(m)})$ from the first pane of Figure \ref{fg:BJ}. While the random discretization displays some instability for small $m$, for $m\geq 50$ we we find that two lines are very close. We note that for the same $\eps_m$, the number of variables for $\pp_{\eps_m}(\tilde{\m}^{(m)},\tilde{\n}^{(m)})$ is proportional to $m^2$ while it is of order  $m^{2r}\gg m^2$ for $\pp_{\eps_m}(\hat{\m}^{\hnm}, \hat{\n}^{\hnm})$. In the right pane, the heat map of the optimizer for $m=100$ is drawn and matches closely that of Figure \ref{fg:BJ}. 
\end{example}

\begin{example}\label{ex2}
Motivated by the model-independent pricing, we consider a stock with three trading dates, i.e. $N=3$ and $d=1$. We take the Black-Scholes model, i.e. $\m_k(dx)=\rho_k(x)dx$ with
\b*
\rho_k(x) ~:=~ {\mathds 1}_{(0,\infty)}(x)\frac{\exp\big(-(\log(x)+2^{k-4})^2 \big/ 2^{k-2}\big)}{x\sqrt{2^{k-2}\pi}},~ \mbox{ for } k=1,2,3, 
\e*
and consider Lookback and Asian options, i.e.  $c(x,y,z):=\max(x,y,z)-z$ and $c(x,y,z):=\big((x+y+z)/3-\lambda z\big)^+$ with $\lambda \ge 0$. Notice that all $\rho_k$ are $L-$Lipschitz on $\R$ with $L=12$, and have finite $\theta-$moments for all $\theta>1$ with
\b*
\int_{\R}|x|^{\theta}\rho_k(x)dx ~\le ~ \int_{\R}|x|^{\theta}\rho_3(x)dx ~=~  e^{\theta(\theta -1)} ~=:~ M_{\theta}. 
\e*
As all $\m_k$ have unbounded support, we employ the full procedure of approximation in Section \ref{ssec:deter}. Let $\tmkn$ be supported on $\big\{i/n: 0\le i< mn \big\}$ , and defined by 
\b* 
\tmkn \big[\big\{i/n\big\}\big] ~:=~ \begin{cases}
1- \sum_{j=1}^{mn-1} \rho_k(x_{k,j})/n &  \mbox{ if }  i=0, \\
\rho_k(x_{k,i})/n &  \mbox{ if }  1\le i < mn,
\end{cases}  
\e* 
 where $x_{k,i}:={\rm argmin}_{x\in [i/n,(i+1)/n]} \rho_k(x)$ for $i=1,\ldots, mn-1$. Proposition \ref{prop:estimation} implies that
 \b*
 \Wc^{\oplus}\big((\tn_{1,m},\tn_{2,m},\tn_{3,m}),(\m_1,\m_2,\m_3)\big) ~\le~ 3\left(1/n  + M_{\theta}/m^{\theta-1} + \inf_{1\le j\le m} \Big\{2j^{2}L/n + 4M_{\theta}/j^{\theta-1}\Big\} \right).
 \e* 
Taking $j=m=m_n:= \lfloor \big(n(\theta-1)M_{\theta}/L\big)^{1/(\theta+1)}  \rfloor$ and setting $\m_k^n:=\tilde{\m}^{(n)}_{k,m_n}$, one has
 \b*
 \Wc^{\oplus}\big((\m_1^n,\m_2^n,\m_3^n),(\m_1,\m_2,\m_3)\big) ~\le~ 3\left(1/n  + M_{\theta}/m_n^{\theta-1} + 2m_n^{2}L/n + 4M_{\theta}/m_n^{\theta-1}\right)~:=~ \eps_n,
 \e*
where $\lim_{n\to\infty}\eps_n=0$. 
Numerical solutions to the \textsf{LP} problems for Lookback and Asian options with $\lambda=2$, corresponding to the above discretization, are presented in Figure \ref{fg:ex2_value}.
  \begin{figure}[h]
  \begin{minipage}[b]{0.47\textwidth}
\includegraphics[width=\textwidth,height=0.68\textwidth]{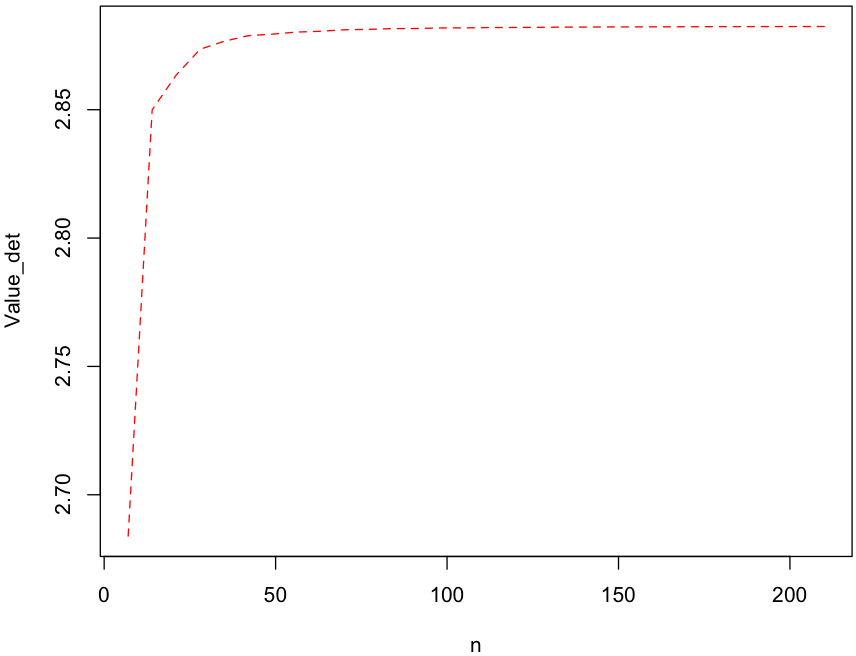}
  \end{minipage}
    \hfill
    \begin{minipage}[b]{0.47\textwidth}  \includegraphics[width=\textwidth,height=0.68\textwidth]{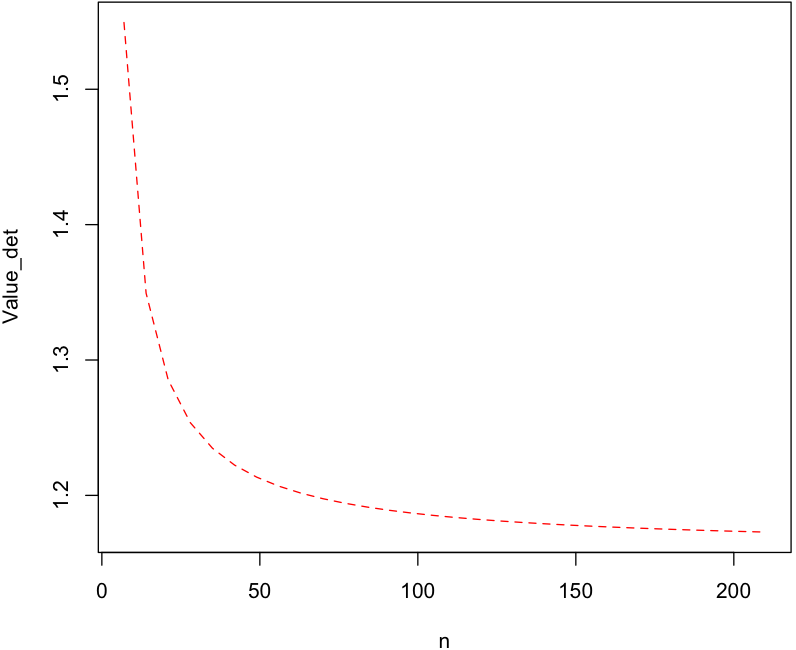}
      \end{minipage}
         \caption{Computations for Example \ref{ex2}. The two panes show the values $\pp_{\eps_n}(\m_1^n,\m_2^n,\m_3^n)$ for $10\le n\le 200$. The left pane stands for the Lookback option and the right one for the Asian option.}\label{fg:ex2_value}
\end{figure}
In Figure \ref{fg:ex2_heatmap}, we exhibit the heat maps of the optimizers projected respectively on $(S_1, S_2)$ and $(S_2,S_3)$ for $n=100$. 
\begin{figure}[h]
  \begin{minipage}[b]{0.47\textwidth}
\includegraphics[width=\textwidth,height=0.67\textwidth]{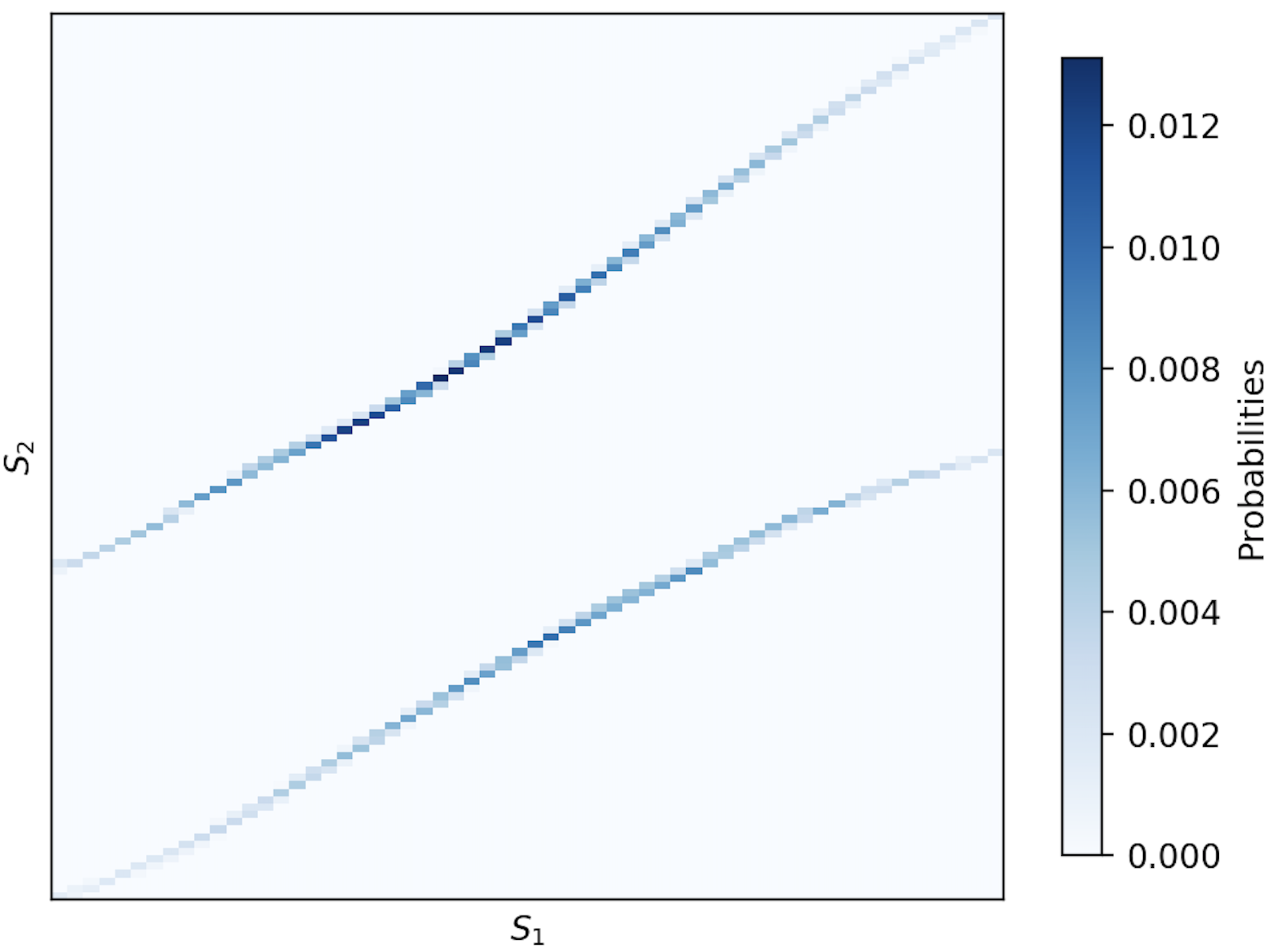}
  \end{minipage}
    \hfill
  \begin{minipage}[b]{0.47\textwidth}  \includegraphics[width=\textwidth,height=0.67\textwidth]{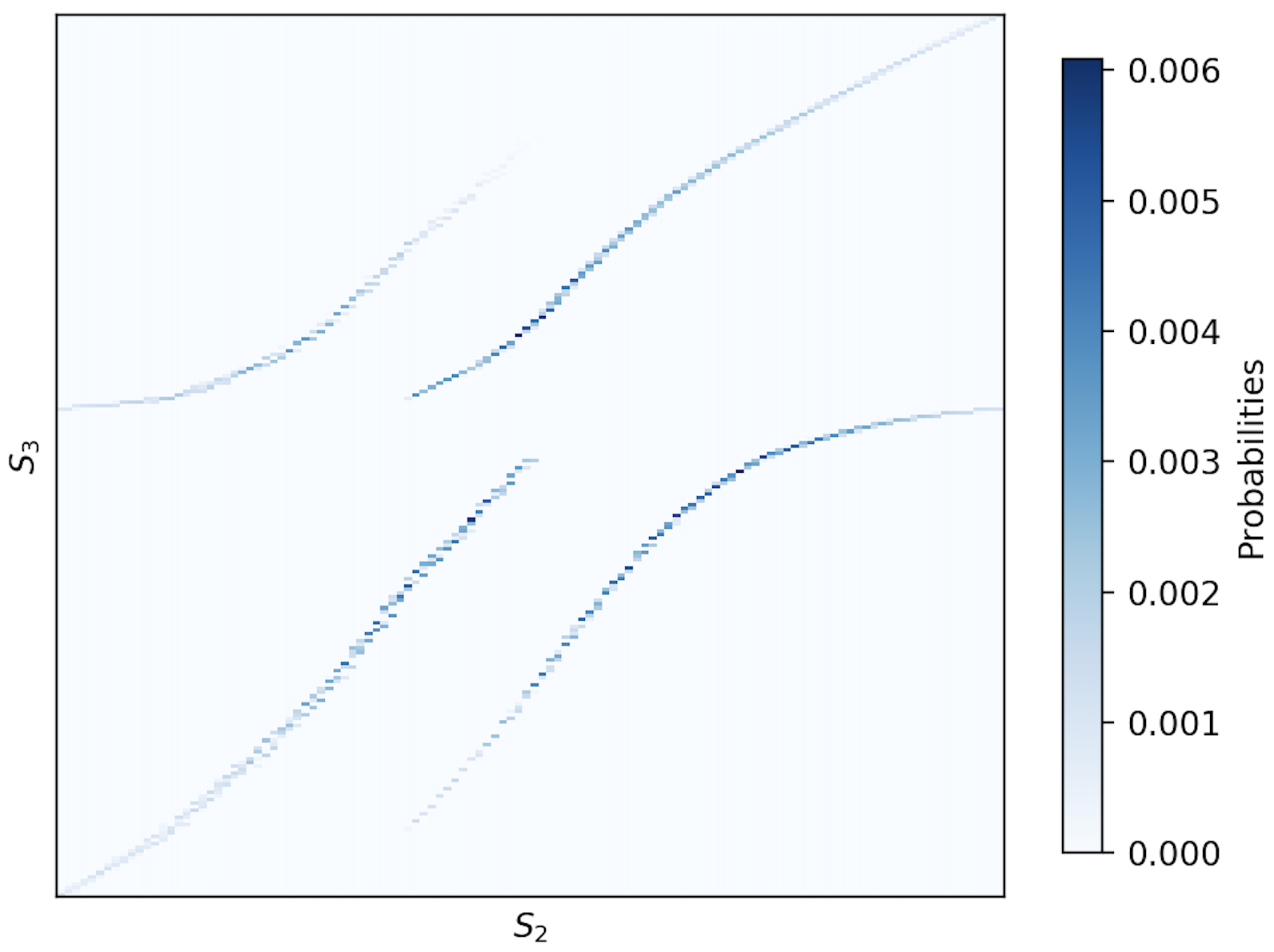}
  \end{minipage}
   \hfill
    \begin{minipage}[b]{0.47\textwidth}
\includegraphics[width=\textwidth,height=0.67\textwidth]{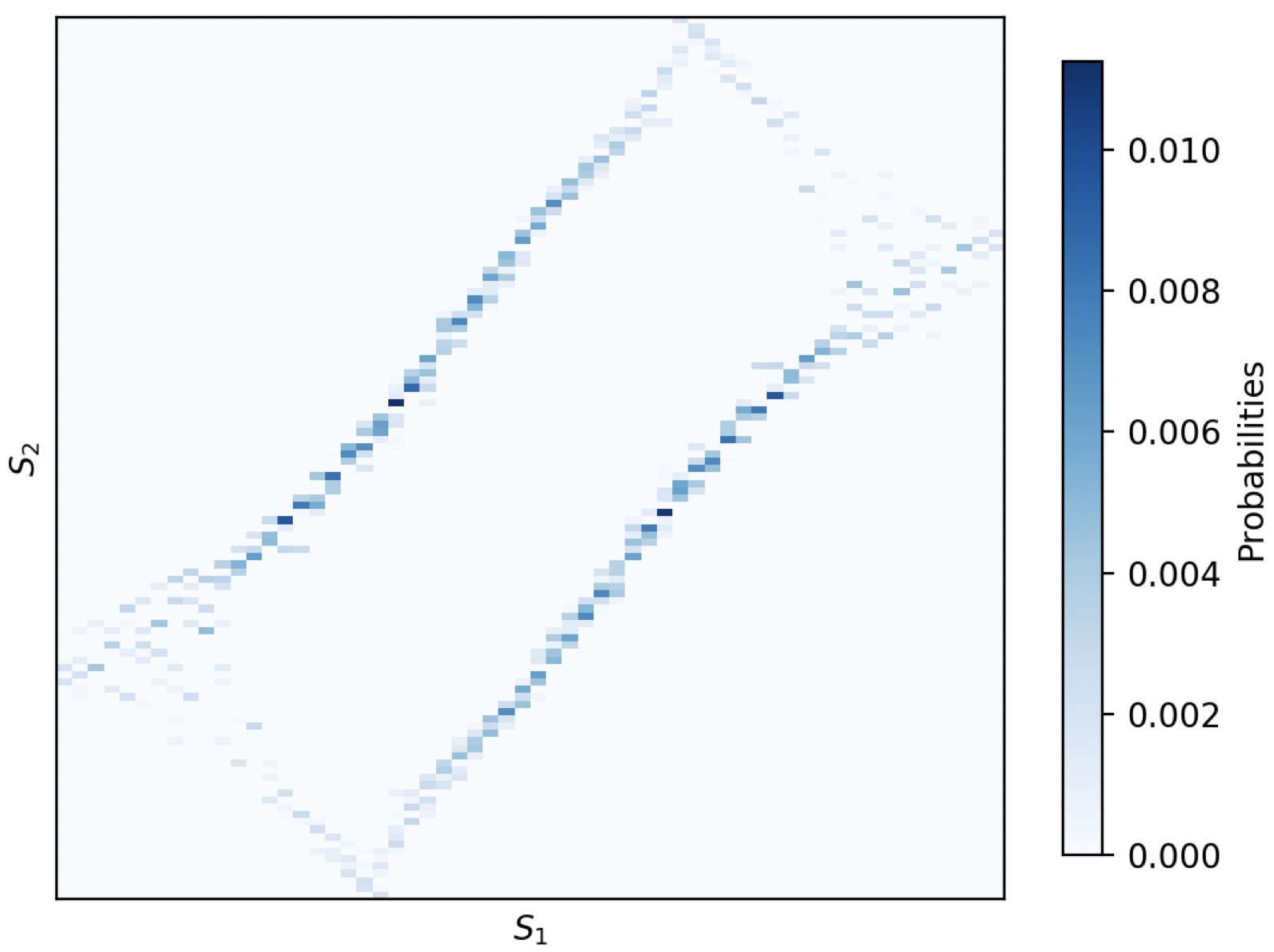}
  \end{minipage}
    \hfill
  \begin{minipage}[b]{0.47\textwidth}  \includegraphics[width=\textwidth,height=0.67\textwidth]{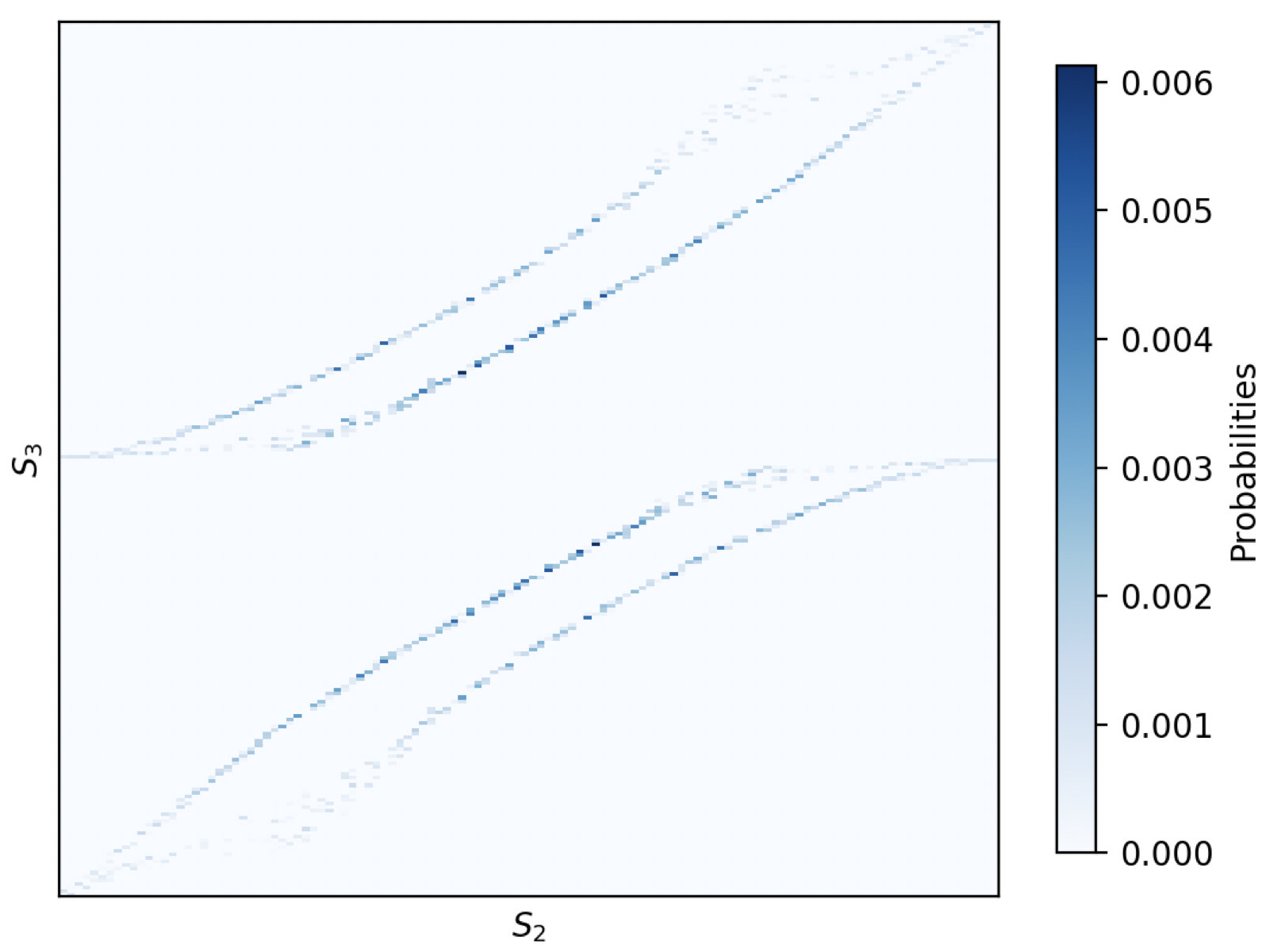}
  \end{minipage}
      \caption{Computations for Example \ref{ex2}. The four panes exhibit the heat maps of the optimizer projected on $(S_1,S_2)$ and $(S_2,S_3)$ for $n=100$. The top two correspond to the Lookback option and the bottom two correspond to the Asian option.}\label{fg:ex2_heatmap}
\end{figure}
The two panes above are for the Lookback option, where conditioning on $S_1$, $S_2$ takes two values, while conditioning on $S_2$, $S_3$ may take up to four values. The two panes below are for the Asian option. It appears that $(S_1, S_2)$ is concentrated on the boundary of a quadrilateral polygon, and $(S_2, S_3)$ is on the boundaries of two disjoint quadrilateral polygons.
\end{example}

\begin{example}\label{ex3}
Recently, the geometry of \textsf{MOT} problems in general dimensions has been studied, see e.g. \cite{LGK,OS, DMT}. We provide here numerical evidence in two dimensions which casts doubt on Conjecture 2 of \cite{LGK}. Take $N=d=2$ and $c(\bx,\by):=-\sqrt{(x_1-y_1)^2+(x_2-y_2)^2}$ for all  $\bx=(x_1,x_2)$, $\by=(y_1,y_2)\in\R^2$. Given $(\m,\n)\in\Pc^{\pq}$, the conjecture is stated as follows: If $\m$ admits a density on $\R^2$, then the support of $\P^*_{\bx}$ contains at most three points for $\m$ - a.e. $\bx\in\R^2$, where $\P^*\in\Mc(\m,\n)$ is the optimizer for $\pp(\m,\n)$ and $(\P^*_{\bx})_{\bx\in\R^2}$ is the regular conditional disintegration of $\P^*$ with respect to $\m$. Let $\m(d\bx)=\rho(\bx)d\bx$ and $\n(d\by)=\sigma(\by)d\by$ be identified by $\rho(\bx) := {\mathds 1}_{[-1,1]^2}(\bx)/4$ and 
\b*
 \sigma(\by) ~:=~ \frac{2-y_1}{4}{\mathds1}_{[1,2]\times [-1,1]}(\by) + \frac{2+y_1}{4}{\mathds 1}_{[-2,-1]\times [-1,1]} (\by) + \frac{2-y_2}{4}{\mathds 1}_{[-1,1]\times [1,2]} (\by) \\
  + \frac{2+y_2}{4}{\mathds 1}_{[-1,1]\times [-2,-1]} (\by).
\e*
Note that $\m$, $\n$ have bounded support and the deterministic discretization $\m^{(n)}$ and $\n^{(n)}$ of Section \ref{ssec:deter} may be computed explicitly. We obtain, $\m^{(n)}\big[\big\{(i/n,j/n)\big\}\big]=1/4n^2$ for  $i$, $j=-n\ldots, n-1$ and, for $i'$, $j'=-2n\ldots, 2n-1$,
\b*
 \n^{(n)}\big[\big\{(i'/n,j'/n)\big\}\big]~=~ \begin{cases}
(4n+2i'+1)/8n^3 &\mbox{if }  -2n\le i'\le -n-1,~ -n\le j' \le n-1, \\
(4n+2j'+1)/8n^3 &\mbox{if }  -n\le i'\le n-1,~ -2n\le j' \le -n-1, \\
(4n-2j'-1)/8n^3 &\mbox{if }  -n\le i'\le n-1,~ n\le j' \le 2n-1, \\
(4n-2i'-1)/8n^3 &\mbox{if }  n\le i'\le 2n-1,~ -n\le j' \le n-1, \\
0, &\mbox{otherwise}. 
\end{cases} 
\e*
With $\eps_n:=4/n \ge\Wc^{\oplus}\big((\m^{(n)},\n^{(n)}),(\m,\n)\big)$, we obtain the \textsf{LP} problem $\pp_{\eps_n}(\m^{(n)},\n^{(n)})$. For comparison, we also consider an approximation based on the random discretization. As in Example \ref{ex1}, we denote $\hm$, $\hn$ the corresponding  empirical measures and $\pp_{\eps_m}(\hm,\hn)$ the  \textsf{LP} problem. As $\n$ has a bounded support, with $C(3,2)$ defined in Proposition \ref{prop:sto_disc}, one has
\b*
\E\big[\Wc^{\oplus}\big((\hm,\hn),(\m,\n)\big)\big] ~\le~ 2C(3,2)(1+\log n)n^{-1/2}~=:~\chi(n).
\e*
Setting 
$\hnm:=\lfloor m^{r} \rfloor$ with $r=4.1$, one has  $\sum_{m\ge 1} \chi(\hnm)/\eps_m <\infty$ and $\lim_{m\to\infty}\pp_{\eps_m}(\hat{\m}^{\hnm}, \hat{\n}^{\hnm})=\pp(\m, \n)$ almost surely. Solving $\pp_{\eps_n}(\m^{(n)},\n^{(n)})$ and  $\pp_{\eps_m}(\hat{\m}^{\hnm}, \hat{\n}^{\hnm})$, the values are plotted in Figure \ref{fg:square}, where the convergence is illustrated.  
\begin{figure}[h]
 \begin{minipage}[b]{0.47\textwidth}
\includegraphics[width=\textwidth,height=0.68\textwidth]{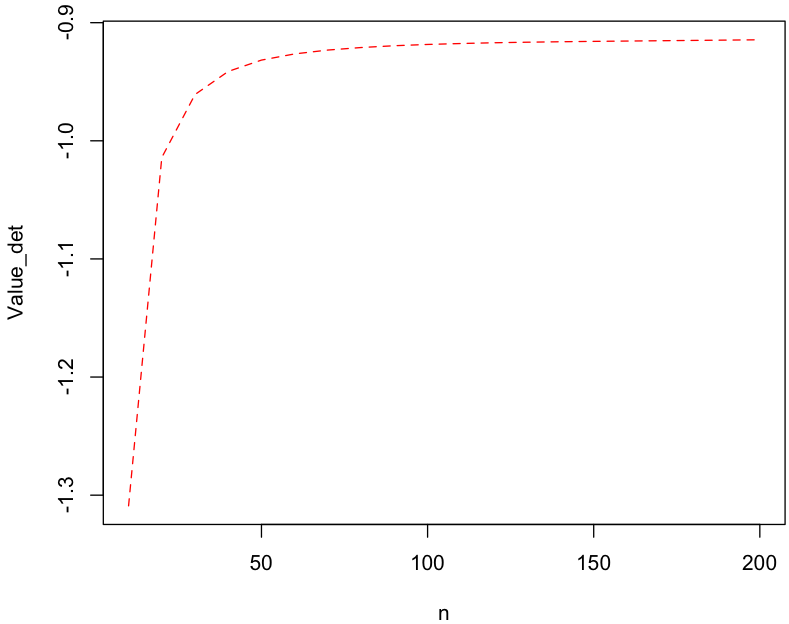}
  \end{minipage}
    \hfill
  \begin{minipage}[b]{0.47\textwidth}  \includegraphics[width=\textwidth,height=0.68\textwidth]{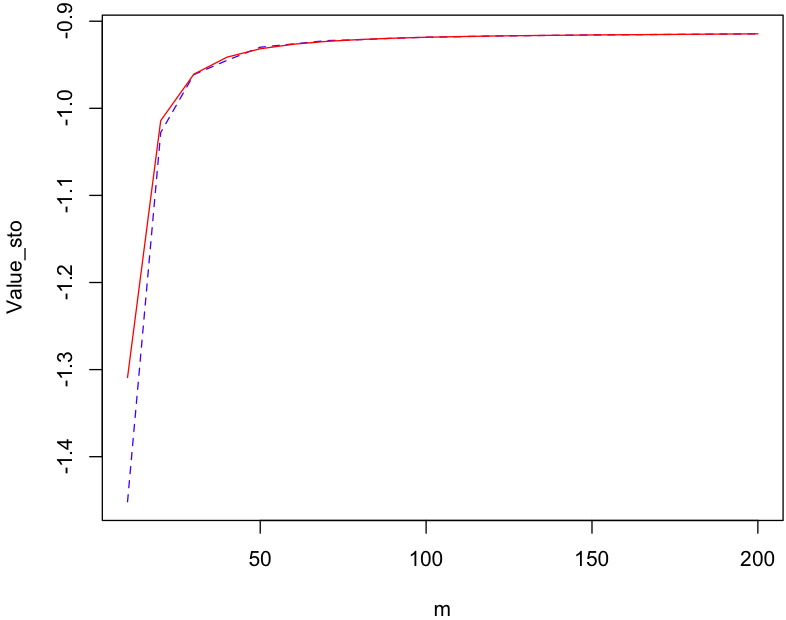}
  \end{minipage}
       \caption{Computations for Example \ref{ex3}. The left pane shows the values of $\pp_{\eps_n}(\m^{(n)},\n^{(n)})$ for $10\le n\le 200$ and the right pane shows the values $\pp_{\eps_m}(\hat{\m}^{\hnm}, \hat{\n}^{\hnm})$ (dashed line) for $10\le m\le 200$.}\label{fg:square}
\end{figure}
Note that, the complexity of $\pp_{\eps_m}(\hat{\m}^{\hnm}, \hat{\n}^{\hnm})$ is of order $m^{2r}$, which is as same as in Example \ref{ex1}, however the complexity of $\pp_{\eps_n}(\m^{(n)},\n^{(n)})$ is of order $n^4$ which is the square of that in the one-dimensional case. In Figure \ref{fg:square_heatmap}, we draw the heat map of the optimizers projected on $\big(S_1^{(1)},S_1^{(2)},S_2^{(1)}\big)$, where we recall that $\bS_1=\big(S_1^{(1)},S_1^{(2)}\big)$ and $\bS_2=\big(S_2^{(1)},S_2^{(2)}\big)$. As $\m$ and $\n$ are invariant by the map $\R^2\ni  (x,y)\mapsto (y,x)\in\R^2$,  $\big(S_1^{(1)},S_1^{(2)},S_2^{(1)}\big)$ and  $\big(S_1^{(2)},S_1^{(1)},S_2^{(2)}\big)$ are indistinguishable in law under the optimizer. The areas highlighted in red correspond to the values of $\bS_1$ which are transferred into more than three points. These clearly appear to have positive mass in disagreement  with Conjecture 2 in \cite{LGK}.
\begin{figure}[h]
   \begin{minipage}[b]{0.47\textwidth}  \includegraphics[width=\textwidth,height=0.7\textwidth]{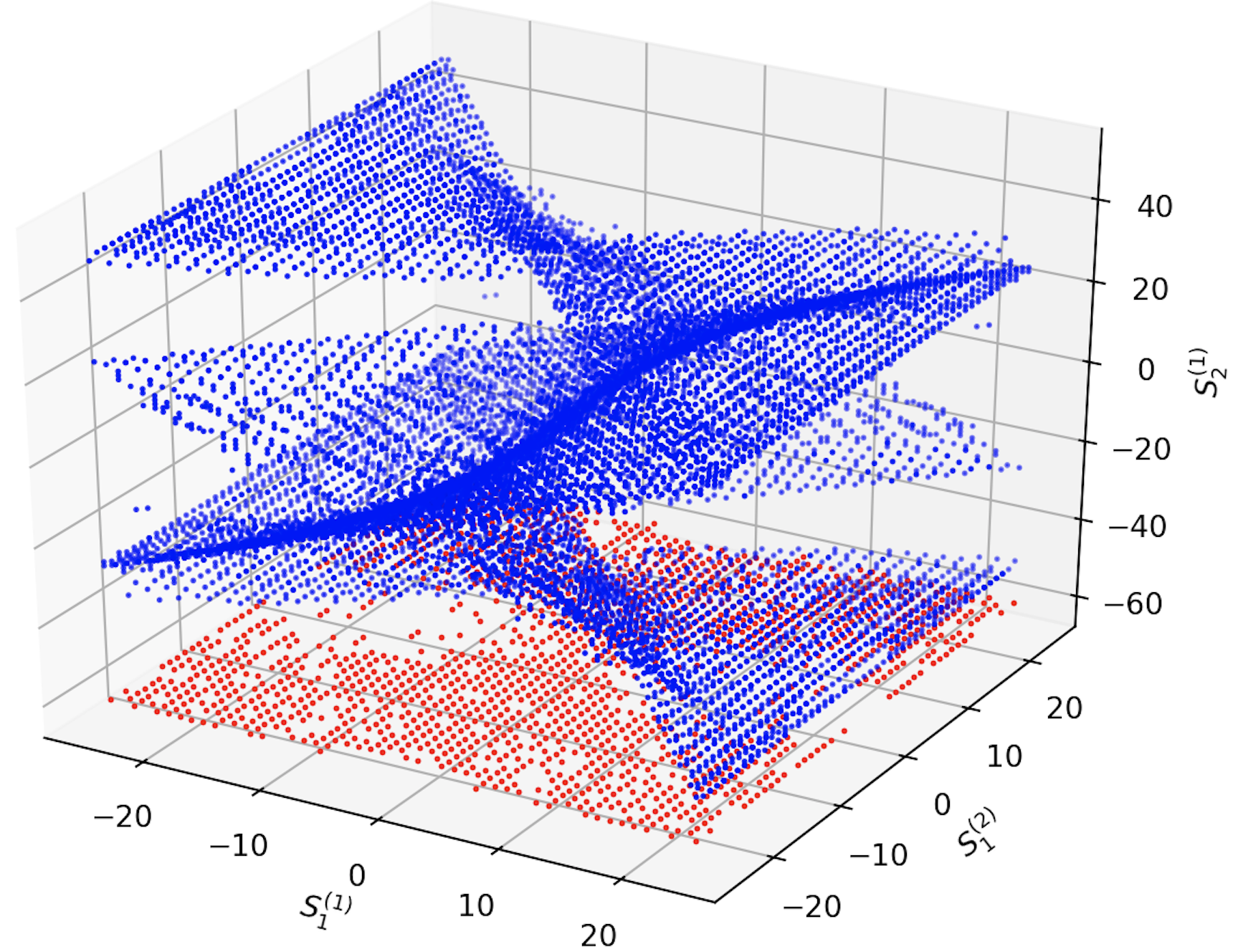}
  \end{minipage}
 \hfill
  \begin{minipage}[b]{0.47\textwidth}  \includegraphics[width=\textwidth,height=0.7\textwidth]{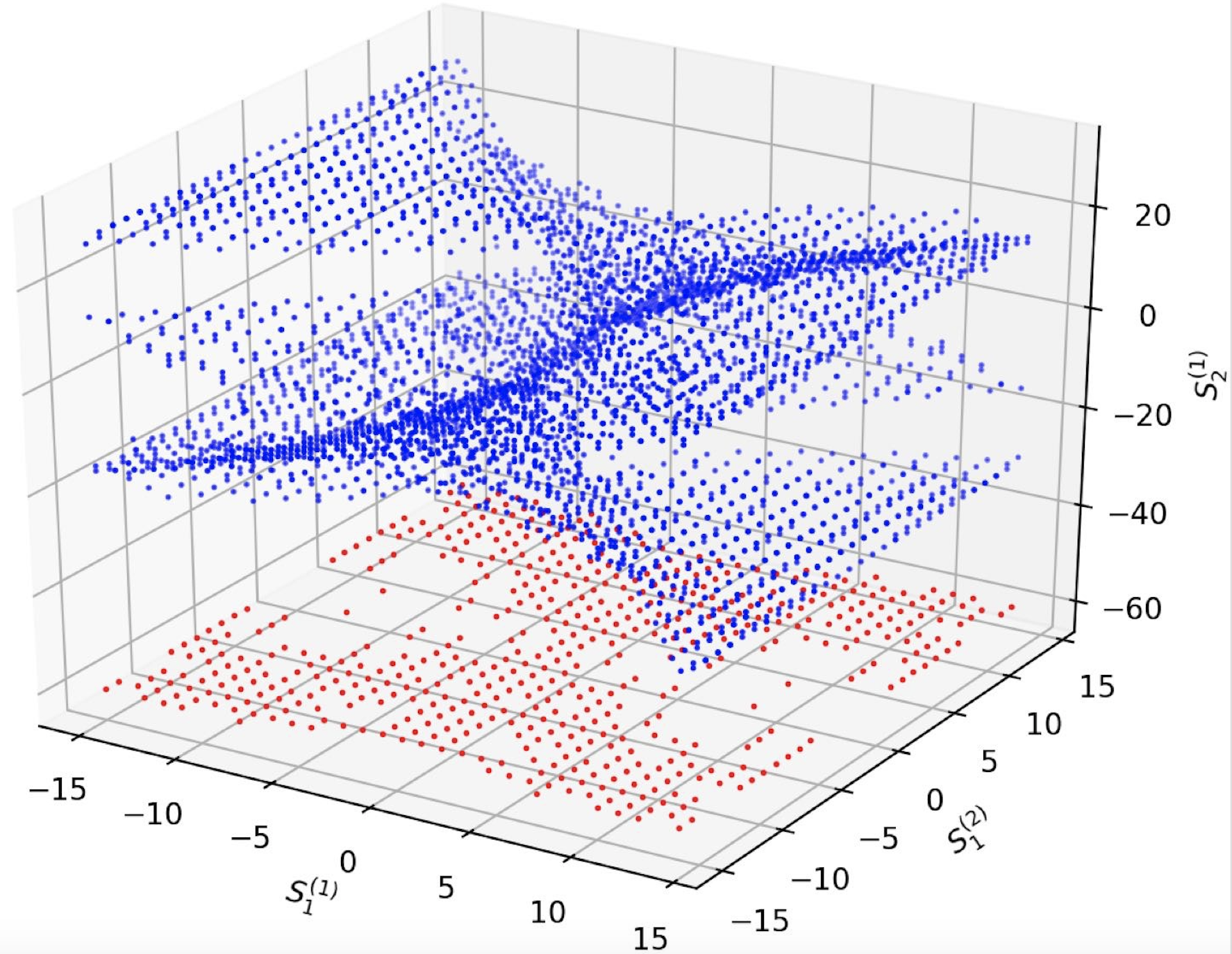}
  \end{minipage}
       \caption{Computations for Example \ref{ex3}. The left pane shows the heat map of the optimizer projected on $\big(S_1^{(1)},S_1^{(2)},S_2^{(1)}\big)$ for $n=100$ and the right pane shows  the heat map of the optimizers projected on $\big(S_1^{(1)},S_1^{(2)},S_2^{(1)}\big)$ for $m=100$.}\label{fg:square_heatmap}
\end{figure}
 \end{example}

\begin{example}\label{ex4}
To show the universality of our method, we consider in the last example an \textsf{MOT} problem in $\R^3$, i.e. $N=2$ and $d=3$. Let $c(\bx,\by):= \big(\sum_{i=1}^3 \lambda_i|x_i-y_i| -K\big)^+$ 
 for all $\bx=(x_1,x_2,x_3)$, $\by=(y_1,y_2,y_3)\in\R^3$, where $K>0$, $\lambda_i\ge 0$ and $\sum_{i=1}^3\lambda_i=1$. Here $c$ represents the payoff of a basket option written on three forward start options with strike $K$. We construct $(\m,\n)\in\Pc^{\preceq}$ in the following way. Let $\rho:\R^3\to [0,\infty)$ be an $L-$Lipschitz density function with a  finite $\theta-$moment for some $\theta>1$ and denote $\m(d\bx)=\rho(\bx)d\bx$. We define next $\n$ as the convolution of $\m$ with a standard normal distribution, i.e. $\n(d\by)=\sigma(\by)d\by$, where
\b*
\sigma(\by) ~:=~ \int_{\R^3}\rho(\by-\bx)\frac{1}{(2\pi)^{3/2}}\exp\left(-\frac{x_1^2+x_2^2+x_3^2}{2}\right)d\bx.
\e*  
Then it turns that $\sigma$ is $L-$Lipschitz and $\n$ admits finite $\theta-$moment. We are now under the same conditions as Example \ref{ex2}. Taking $\lambda_1=1/2$, $\lambda_1=1/3$, $\lambda_3=1/6$,  $K=1$ and 
\b*
\rho(\bx) ~:=~ {\mathds 1}_{[-1,1]^3}(\bx)\frac{|x_1| +|x_2x_3|}{C(1+x_1^2+2x_2^2+3x_3^2)},~ \mbox{ where } C~:=~ \int_{[-1,1]^3}\frac{|x_1|+|x_2x_3|}{1+x_1^2+2x_2^2+3x_3^2}d\bx,
\e*
one has $L=7/C$ and further 
\b*
\int_{\R^3}|\by|^2 \sigma(\by)d\by ~\le~ \frac{3}{C}\left(\frac{9}{2}+\frac{8}{\sqrt{2\pi}}\right) ~:=~M_2 &\mbox{and}& \chi(n)~:=~2C(2,3)n^{-1/3},
\e*
where $C(2,3)$ is given in Proposition \ref{prop:sto_disc}. We carry out the same discretization procedure, deterministic as in Example \ref{ex2} and random as in Examples \ref{ex1} and \ref{ex3}, and solve the corresponding \textsf{LP} problems. The resulting value functions are displayed in Figure \ref{fg:ex4}. 
 \begin{figure}[h]
  \begin{minipage}[b]{0.47\textwidth}
\includegraphics[width=\textwidth,height=0.68\textwidth]{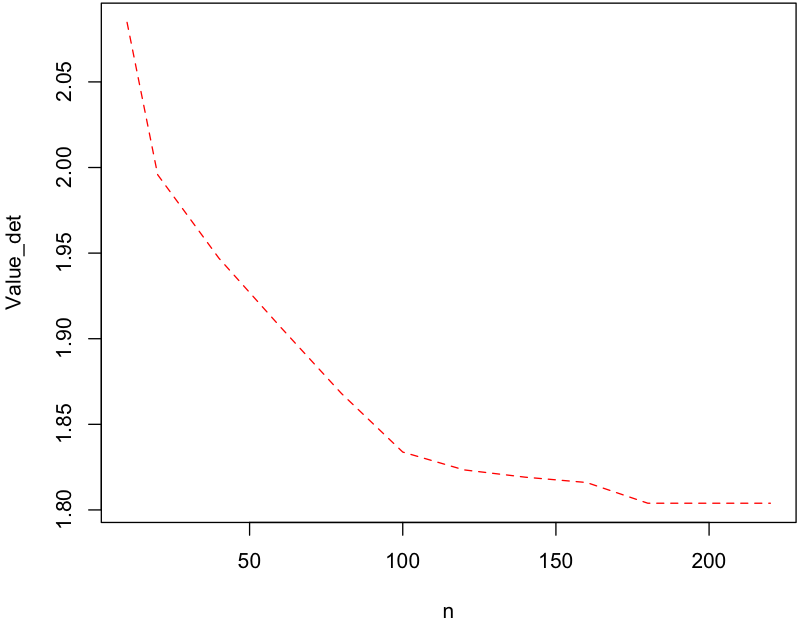}
  \end{minipage}
    \hfill
   \begin{minipage}[b]{0.47\textwidth}
\includegraphics[width=\textwidth,height=0.68\textwidth]{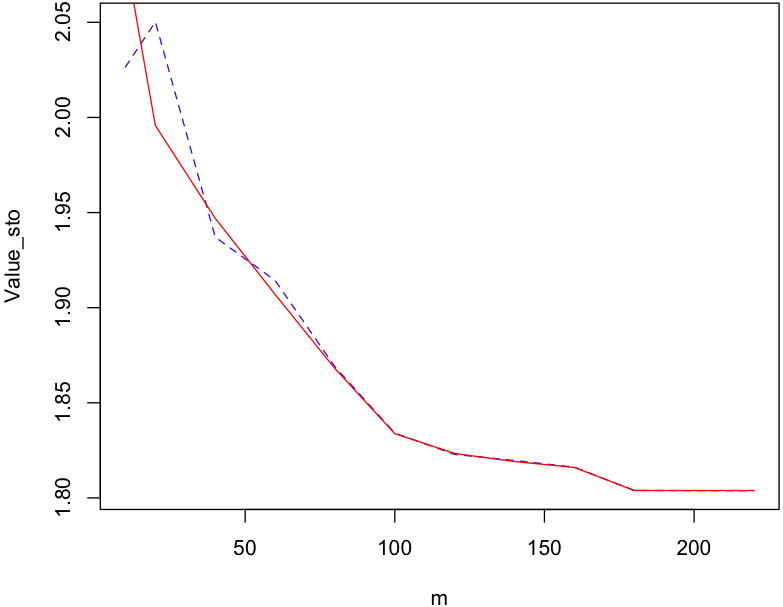}
  \end{minipage}
   \caption{Computations for Example \ref{ex4}. The left pane shows the values $\pp_{\eps_n}(\m^{n},\n^{n})$ for $10\le n\le 220$ and the right pane shows the values $\pp_{\eps_m}(\hat{\m}^{\hnm}, \hat{\n}^{\hnm})$ (dashed line) for $10\le m\le 220$.}\label{fg:ex4}
\end{figure}
 \end{example}

\section{Proofs}\label{sec:proof}

Section \ref{sec:proof} is devoted to the proofs of Theorems \ref{thm:general} and \ref{thm:convergence_rate}. Similar to the usual \textsf{MOT}, the relaxed problem $\pp_{\eps}(\bm)$ admits a dual formulation given by
\begin{eqnarray}\label{def:dp}
\dd_{\eps}(\bm) ~:=~ \inf_{(H,\ps)\in\Dc_{\eps}}~ \left[\sum_{k=1}^N\int \ps_k d\m_k\right],
\end{eqnarray}
where $\Hc$ is the set of $\F-$adapted processes $H=(H_k)_{1\le k\le N-1}$ taking values in $\R^d$, i.e. $H_k\in\L^{\infty}(\Om^k;\R^d)$ for $k=1,\ldots, N-1$, and $\Dc_{\eps}\subset\Hc\times \Lam^N$ denotes the collection of pairs $\big(H=(H_k)_{1\le k\le N-1},\ps=(\ps_k)_{1\le k\le N}\big)$ such that for $(\bx_1,\ldots, \bx_N)\in\Om^N$
\be\label{ineq:dual}
 \sum_{k=1}^{N-1} H_k(\bx_1,\ldots, \bx_k)\cdot (\bx_{k+1}-\bx_k) - \eps \sum_{k=1}^{N-1}\|H_k\|_{\infty} + \sum_{k=1}^N\ps_k(\bx_k) ~\ge~ c(\bx_1,\ldots, \bx_N).
\ee
Recall that $\Mc_{\eps}(\bm)\subset \Pc(\bm)$ is convex and compact. An application of the Min-Max theorem allows to establish the  Kantorovich duality between \eqref{def:rmotp} and \eqref{def:dp} in Theorem \ref{thm:duality} below. The proof largely repeats the reasoning in \cite{BHLP} where the result was shown for $\eps=0$, but it is nonetheless included in Appendix \ref{sec:append}. 

\begin{theorem}\label{thm:duality}
Let $\bm\in\Pc_{\eps}^{\pq}$. If $c$ is upper semicontinuous with linear growth, then there exists an optimiser $\P^*$ for $\pp_{\eps}(\bm)$, i.e. $\P^*\in \Mc_{\eps}(\bm)$ and $\pp_{\eps}(\bm)=\E_{\P^*} [c]$. Moreover, there is no  duality gap, i.e. $\pp_{\eps}(\bm)=\dd_{\eps}(\bm)$.
\end{theorem}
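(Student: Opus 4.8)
The plan is to follow the proof of the $\eps=0$ duality in \cite{BHLP}, carrying the penalty term $\eps\sum_{k}\|H_k\|_\infty$ through at every point where the martingale constraint intervenes.

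\emph{Existence of an optimiser} is pure compactness. Since $\bm\in\Pc^{\pq}_{\eps}$, the set $\Mc_{\eps}(\bm)$ is a nonempty, convex, weakly closed subset of the weakly compact set $\Pc(\bm)$, hence itself weakly compact. It remains to check that $\P\mapsto\E_\P[c]$ is weakly upper semicontinuous on $\Pc(\bm)$. Writing the u.s.c.\ linear-growth function $c$ as a decreasing limit $c=\inf_{j}c_j$ of Lipschitz functions of linear growth (e.g.\ the sup-convolutions $c_j(\bx_1,\ldots,\bx_N):=\sup\{c(\by_1,\ldots,\by_N)-j\sum_k|\bx_k-\by_k|\}$ for $j$ large), each $\P\mapsto\E_\P[c_j]$ is weakly continuous on $\Pc(\bm)$ — the marginals being frozen, the coordinates are uniformly integrable along weakly convergent sequences there — while $\E_\P[c]=\inf_j\E_\P[c_j]$ by dominated convergence. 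Thus $\P\mapsto\E_\P[c]$ is an infimum of weakly continuous maps, hence weakly u.s.c., and attains its supremum on the weakly compact set $\Mc_{\eps}(\bm)$; this gives $\P^*$.

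\emph{Weak duality} $\pp_{\eps}(\bm)\le\dd_{\eps}(\bm)$ follows by integrating \eqref{ineq:dual} against an arbitrary $\P\in\Mc_{\eps}(\bm)$: the marginal constraint turns $\sum_k\ps_k(\bS_k)$ into $\sum_k\int\ps_k\,d\m_k$, the constant $-\eps\sum_k\|H_k\|_\infty$ is untouched, and $\E_\P[\sum_k H_k\cdot(\bS_{k+1}-\bS_k)]\le\eps\sum_k\|H_k\|_\infty$ by the $\eps$-martingale property (which extends \eqref{def:rtp} to bounded measurable $h$ via \eqref{def:rtp2}); hence $\E_\P[c]\le\sum_k\int\ps_k\,d\m_k$, and one takes the supremum over $\P$ and the infimum over $(H,\ps)$.

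\emph{Strong duality.} Work on the weakly compact convex set $\Pc(\bm)$, and restrict the multipliers $H_k$ to be bounded and continuous, $H_k\in\Cc_b(\Om^k;\R^d)$ — this is harmless, since it can only raise the infimum defining $\dd_{\eps}(\bm)$, which we will recover exactly. For $\P\in\Pc(\bm)$, the quantity $\sup_{H}\big(\sum_k\E_\P[H_k\cdot(\bS_{k+1}-\bS_k)]-\eps\sum_k\|H_k\|_\infty\big)$ equals $0$ when $\P\in\Mc_{\eps}(\bm)$ (use \eqref{def:rtp} with $h=H_k$, together with $H\equiv 0$) and $+\infty$ otherwise (scale a violating $h\in\Cc_b(\Om^k;\R^d)$ to infinity). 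Therefore
\b*
\pp_{\eps}(\bm) ~=~ \sup_{\P\in\Pc(\bm)}\ \inf_{H}\ \Big( \E_\P\big[\, c - \sum_k H_k\cdot(\bS_{k+1}-\bS_k) \,\big] + \eps\sum_k\|H_k\|_\infty \Big).
\e*
The bracketed functional is affine and weakly u.s.c.\ in $\P$ on the compact convex set $\Pc(\bm)$ (again by the uniform-integrability argument, since $c$ is u.s.c.\ and the $H_k$ are continuous bounded), and convex and sup-norm continuous in $H$ on a convex set; a minimax theorem — e.g.\ Sion's, whose hypotheses are precisely these — lets us exchange $\sup_\P$ and $\inf_H$. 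For fixed $H$, with $c_H:=c-\sum_k H_k\cdot(\bS_{k+1}-\bS_k)$ — an u.s.c.\ function dominated above by a sum $\sum_k a_k(\bx_k)$ with $a_k\in L^1(\m_k)$ — the inner problem $\sup_{\P\in\Pc(\bm)}\E_\P[c_H]$ is a multi-marginal optimal transport problem, and classical Kantorovich duality identifies it with the infimum of $\sum_k\int\ps_k\,d\m_k$ over $\ps_k$ satisfying $\sum_k\ps_k(\bx_k)\ge c_H(\bx_1,\ldots,\bx_N)$; moreover this infimum is unchanged when one restricts to $\ps_k\in\Lam$, since a near-optimal, u.s.c., linear-growth feasible $\ps_k$ is dominated by its sup-convolution $\ps_k^{\lambda}(\bx):=\sup_{\by}\{\ps_k(\by)-\lambda|\bx-\by|\}$, which is $\lambda$-Lipschitz, still feasible, and satisfies $\int\ps_k^{\lambda}\,d\m_k\downarrow\int\ps_k\,d\m_k$ as $\lambda\to\infty$. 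Substituting back, and absorbing the constant $\eps\sum_k\|H_k\|_\infty$ into $\ps_1$ so that the constraint becomes exactly \eqref{ineq:dual}, the resulting double infimum over $H$ and over such $(\ps_k)$ equals $\dd_{\eps}(\bm)$. Combined with weak duality, $\pp_{\eps}(\bm)=\dd_{\eps}(\bm)$.

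The step I expect to be most delicate is the minimax exchange: one must place oneself on $\Pc(\bm)$ rather than on a larger family of measures, since only there is one side of the game weakly compact, and one must then establish the weak upper semicontinuity of $\P\mapsto\E_\P[c_H]$ for the merely linear-growth integrand $c_H$, which relies entirely on the marginals $\m_k$ being fixed so that the coordinates are uniformly integrable; this is also why $H$ must be taken continuous there. The remaining ingredients — the Lagrangian encoding of the $\eps$-martingale constraint and the regularisation pushing the dual potentials into $\Lam$ — are routine, and overall the argument is that of \cite{BHLP} with the extra penalty term.
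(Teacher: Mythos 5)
Your proposal is correct and follows essentially the same route as the paper's proof: existence by weak compactness of $\Mc_{\eps}(\bm)$, and strong duality by penalizing the $\eps$-martingale constraint with continuous bounded multipliers $H$, applying a Min--Max theorem on the compact convex set $\Pc(\bm)$, and invoking classical Kantorovich duality (with Lipschitz potentials) for the inner multi-marginal problem, before sandwiching with weak duality. The only differences are cosmetic: you traverse the chain from $\pp_{\eps}(\bm)$ towards $\dd_{\eps}(\bm)$ rather than the reverse, and you spell out details the paper leaves implicit (upper semicontinuity of $\P\mapsto\E_{\P}[c]$ via sup-convolutions, Sion's theorem, and the regularization of the dual potentials into $\Lam$).
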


For $\eps=0$, the left-hand side of \eqref{ineq:dual}
 stands for a super-replication of the payoff $c$ by trading dynamically in the underlying assets and statically in a range of Vanilla options. More precisely, $H_k(\bS_1,\ldots, \bS_k)$ denotes the number of shares held by the trader at time $k$. Vanilla options allow the holder to receive the cash flow equal to $\ps_k(\bS_k)$ at time $k$ for $k=1,\ldots, N$, and their market price is given as the integral of $\ps_k$ with respect to $\m_k$, where $\m_1,\ldots, \m_N$ represent the market-implied distributions of $\bS_1,\ldots, \bS_N$. When $d=1$,  as observed by Breeden and Litzenberger \cite{BL}, $\m_k$ are uniquely determined from the observed prices of call/put options for all possible strikes. In consequence, the expression in brackets on the right-hand side of \eqref{def:dp} represents the cost of pursuing a superhedging strategy $(H,\psi)$ and $\dd(\bm)$ is equal to the minimal superhedging price of $c$.
 
\subsection{Convergence of relaxed \textsf{MOT} problems}\label{ssec:general}

Theorem \ref{thm:general} shows that $\pp(\bm)$ can be approximated by considering a sequence of relaxed \textsf{MOT} problems, which provides the main insight into our proposed scheme for solving  \textsf{MOT} problems. The proof of Theorem \ref{thm:general} is divided into the proofs of  Corollary \ref{cor1}, Proposition \ref{prop:rusc} and Lemma \ref{lem:compact}. 

\begin{proposition}\label{prop:approx}
Let $\bm\in\Pc_{\eps}^{\pq}$. Then for any $\bn\in \Pc^N$, one has $\bn\in \Pc_{\eps+r}^{\pq}$ with  $r:=\Wc^{\oplus}(\bm, \bn)$. If we assume in addition that $c$ is Lipschitz with Lipschitz constant ${\li}(c)$, then  $
\pp_{\eps}(\bm)\le \pp_{\eps+r}(\bn)+{\li}(c)r$. \end{proposition}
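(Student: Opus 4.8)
The plan is to start from an $\eps$-approximating martingale measure for $\bm$, transport it marginal-by-marginal onto $\bn$ using optimal couplings, and then control how much the martingale defect can deteriorate. Since $\bm\in\Pc_{\eps}^{\pq}$, fix $\P\in\Mc_{\eps}(\bm)$. For each $k=1,\dots,N$ I would take an optimal coupling $\pi_k\in\Pc(\Om\times\Om)$ of $(\m_k,\n_k)$ — it exists because $\m_k,\n_k$ have finite first moments and $(\bx,\by)\mapsto|\bx-\by|$ is continuous — and disintegrate it against its first marginal as $\pi_k(d\bx,d\by)=\m_k(d\bx)\,Q_k(\bx,d\by)$ for a Borel probability kernel $Q_k$, so that $\int Q_k(\bx,\cdot)\,\m_k(d\bx)=\n_k$ and $\int|\bx-\by|\,\pi_k(d\bx,d\by)=\Wc(\m_k,\n_k)$. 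I then define $\P'\in\Pc(\Om^N)$ as the law of $(\bX'_1,\dots,\bX'_N)$, where $(\bX_1,\dots,\bX_N)$ has law $\P$ and, conditionally on $(\bX_1,\dots,\bX_N)$, the coordinates $\bX'_k$ are independent with $\bX'_k\sim Q_k(\bX_k,\cdot)$; write $\bar\P$ for the joint law of $\big((\bX_k)_k,(\bX'_k)_k\big)$ on $\Om^N\times\Om^N$. By construction $\P'\circ\bS_k^{-1}=\n_k$, so $\P'\in\Pc(\bn)$, and $(\bX_k,\bX'_k)\sim\pi_k$ under $\bar\P$.

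The heart of the argument is to show $\P'\in\Mc_{\eps+r}(\bn)$. By the monotone-class equivalence recorded in Definition \ref{def:rmm}, it is enough to verify \eqref{def:rtp} for $\P'$ with $\eps$ replaced by $\eps+r$, and this I would check on $\bar\P$ after the decomposition
\[
\bX'_{k+1}-\bX'_k~=~(\bX'_{k+1}-\bX_{k+1})+(\bX_{k+1}-\bX_k)+(\bX_k-\bX'_k).
\]
Given $h\in\Cc_b(\Om^k;\R^d)$, the first and third terms contribute, via the pointwise bound $|h\cdot v|\le\|h\|_{\infty}|v|$ and $(\bX_j,\bX'_j)\sim\pi_j$, at most $\|h\|_{\infty}\Wc(\m_{k+1},\n_{k+1})$ and $\|h\|_{\infty}\Wc(\m_k,\n_k)$. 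For the middle term I would condition on $(\bX_1,\dots,\bX_N)$: the conditional law of $(\bX'_1,\dots,\bX'_k)$ is $Q_1(\bX_1,\cdot)\ox\cdots\ox Q_k(\bX_k,\cdot)$, so $\E_{\bar\P}\big[h(\bX'_1,\dots,\bX'_k)\,\big|\,(\bX_j)_j\big]=\tilde h(\bX_1,\dots,\bX_k)$ with $\tilde h(\bx_1,\dots,\bx_k):=\int\cdots\int h(\by_1,\dots,\by_k)\prod_{j=1}^kQ_j(\bx_j,d\by_j)$ a bounded measurable $\R^d$-valued map obeying $\|\tilde h\|_{\infty}\le\|h\|_{\infty}$; hence the middle term equals $\E_{\P}\big[\tilde h(\bS_1,\dots,\bS_k)\cdot(\bS_{k+1}-\bS_k)\big]$, which is $\le\eps\|\tilde h\|_{\infty}\le\eps\|h\|_{\infty}$ because $\P$ obeys \eqref{def:rtp2} — equivalently, \eqref{def:rtp} extended to bounded measurable integrands, which follows from \eqref{def:rtp2} together with $|g\cdot v|\le\|g\|_{\infty}|v|$. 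Summing the three estimates and using $\Wc(\m_k,\n_k)+\Wc(\m_{k+1},\n_{k+1})\le r$ gives $\E_{\P'}\big[h(\bS_1,\dots,\bS_k)\cdot(\bS_{k+1}-\bS_k)\big]\le(\eps+r)\|h\|_{\infty}$, so $\P'\in\Mc_{\eps+r}(\bn)$; in particular $\Mc_{\eps+r}(\bn)\neq\emptyset$, i.e. $\bn\in\Pc_{\eps+r}^{\pq}$.

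For the value estimate, assume in addition that $c$ is Lipschitz, keep $\P$ arbitrary in $\Mc_{\eps}(\bm)$, and build $\P'$ from it as above. Then
\[
\E_{\P}[c]-\E_{\P'}[c]~=~\E_{\bar\P}\big[c(\bX_1,\dots,\bX_N)-c(\bX'_1,\dots,\bX'_N)\big]~\le~{\li}(c)\sum_{k=1}^N\E_{\bar\P}|\bX_k-\bX'_k|~=~{\li}(c)\,r,
\]
using that $c$ is ${\li}(c)$-Lipschitz for the $\ell_1$ distance on $\Om^N$ and $(\bX_k,\bX'_k)\sim\pi_k$. Since $\P'\in\Mc_{\eps+r}(\bn)$ we have $\E_{\P'}[c]\le\pp_{\eps+r}(\bn)$, hence $\E_{\P}[c]\le\pp_{\eps+r}(\bn)+{\li}(c)r$, and taking the supremum over $\P\in\Mc_{\eps}(\bm)$ yields $\pp_{\eps}(\bm)\le\pp_{\eps+r}(\bn)+{\li}(c)r$.

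The step I expect to need the most care is the martingale-defect bound for $\P'$: the natural filtration of $(\bX'_k)$ is not that of $(\bX_k)$, so one cannot condition directly on $\sigma(\bX'_1,\dots,\bX'_k)$. The device above — working with the test-function form \eqref{def:rtp} and pulling the integrand $h$ back through $Q_1,\dots,Q_k$ to obtain an $\Fc_k$-adapted (under $\P$) integrand $\tilde h$ — is precisely what circumvents this; the remaining ingredients (existence of $\pi_k$, disintegration, measurability of $\tilde h$, and the extension of \eqref{def:rtp} to bounded measurable integrands) are routine.
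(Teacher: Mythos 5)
Your argument is correct and follows the paper's strategy at the level of ideas --- the same three-term decomposition $\bX'_{k+1}-\bX'_k=(\bX'_{k+1}-\bX_{k+1})+(\bX_{k+1}-\bX_k)+(\bX_k-\bX'_k)$, with the outer terms controlled by the coordinatewise Wasserstein distances and the middle term reduced to the $\eps$-approximating property of $\P$ applied to a bounded measurable integrand, followed by the identical Lipschitz estimate for the values. Where you genuinely deviate is in how the coupling of $\P$ and $\P'$ is built: the paper invokes Skorokhod's representation theorem to realize $\P$ together with independent Gaussian noises $Z_1,\ldots,Z_N$ on one space, and then needs the transfer result of Lemma \ref{lem:Skorokhod} (proved in the appendix by an inverse-c.d.f.\ construction, coordinate by coordinate) to produce measurable maps $f_k$ with $(X_k,f_k(X_k,Z_k))$ distributed as the optimal coupling $\P_k$; it then handles the middle term by integrating over the noise and applying \eqref{def:rtp} for each frozen noise value. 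You instead disintegrate each optimal coupling $\pi_k=\m_k\otimes Q_k$ and glue $\P$ with the product kernel $\bigotimes_k Q_k(\bx_k,\cdot)$, obtaining the same conditional-independence structure without any auxiliary randomization or functional representation, and you handle the middle term by one conditioning step, pulling $h$ back through the kernels to the $\Fc_k$-adapted integrand $\tilde h$ with $\|\tilde h\|_\infty\le\|h\|_\infty$. This is somewhat more elementary (it bypasses Skorokhod's theorem and the whole of Lemma \ref{lem:Skorokhod}, needing only disintegration and the standard product-kernel construction), while the paper's functional representation $Y_k=f_k(X_k,Z_k)$ is a reusable device elsewhere; both routes rely, as you correctly note, on the fact that \eqref{def:rtp2} yields the bound \eqref{def:rtp} for all bounded measurable, not just continuous, integrands. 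No gaps.
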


\begin{proof}
Set $r_k:=\Wc(\m_k,\n_k)$ for $k=1,\ldots, N$ and one has by definition $r = \sum_{k=1}^N r_k$. Take an arbitrary $\P\in \Mc_{\eps}(\bm)$. It follows from  Theorem 1 of  Skorokhod \cite{Skorokhod} that, there exists an enlarged probability space $(E , \Ec , \Qc)$ which supports random variables $X_k$  and $Z_k$ taking values in $\R^d$ for $k=1,\ldots, N$ such that
\b*
&&\Qc\circ (X_1, \ldots, X_N)^{-1} ~=~ \P, \\
&& Z_1, \ldots, Z_N \mbox{ and } (X_1,\ldots, X_N) \mbox{ are mutually independent}, \\
&& \Qc\circ Z_k^{-1} \mbox{ is a standard normal distribution on } \R^d,\q \mbox{for } k=1,\ldots, N.
\e*
For $k=1,\ldots, N$, let $\P_k$ be the optimal transport plan realizing the Wasserstein distance between $\m_k$ and $\n_k$, i.e. $\P_k\in\Pc(\m_k,\n_k)$ and $r_k=\E_{\P_k}\big[\big|\bS_1-\bS_2\big|\big]$. 
From Lemma \ref{lem:Skorokhod}, there exist measurable functions $f_k: \Om^2\to\R^d$ such that $\Qc\circ (X_k, Y_k)^{-1}= \P_k$ with  $Y_k:= f_k(X_k, Z_k)$, for $k=1, \ldots, N$, which yields in particular $\Qc\circ Y_k^{-1}=\n_k$. Furthermore, one has for all $h\in\Cc_{b}\big(\Om^k;\R^d\big)$
\b*
&&\E_{\Qc}\big[h(Y_1,\ldots, Y_k)\cdot(Y_{k+1}-Y_k)\big] \\
&= & \E_{\Qc}\big[h(Y_1,\ldots, Y_k)\cdot (Y_{k+1}-X_{k+1})\big] + \E_{\Qc}\big[h(Y_1,\ldots, Y_k)\cdot (X_{k+1}-X_k)\big] \\
&&+ \E_{\Qc}\big[h(Y_1,\ldots, Y_k)\cdot (X_k-Y_k)\big] \\
&\le & (r_k+r_{k+1})\|h\|_{\infty} + \E_{\Qc}\Big[h\big(f_1(X_1,Z_1),\ldots, f_k(X_k,Z_k)\big)\cdot \big(X_{k+1}-X_k\big)\Big] \\
&= &(r_k+r_{k+1})\|h\|_{\infty} + \int_{\Om^k} \E_{\P}\big[h\big(f_1(\bS_1,\bx_1),\ldots, f_k(\bS_k,\bx_k)\big)\cdot (\bS_{k+1}-\bS_k)\big]\Nc_k(d\bx_1,\ldots, d\bx_k) \\
&\le&(\eps+r)\|h\|_{\infty}, 
\e*
where $\Nc_k$ denotes the joint distribution of $ Z_1, \ldots, Z_k$. Therefore, $\E_{\P'}\big[h(\bS_1,\ldots, \bS_k)\cdot(\bS_{k+1}-\bS_k)\big]\le (\eps+r)\|h\|_{\infty}$ holds for all $h\in\Cc_b\big(\Om^k;\R^d\big)$, where $\P':=\Qc\circ (Y_1,\ldots, Y_N)^{-1}$. In view of the monotone class theorem, this is equivalent to  
\b*
\E_{\P'}\Big[\Big|\E_{\P'}\big[\bS_{k+1}\big|\Fc_k\big] -\bS_k\Big |\Big] ~ \le~ \eps+r.
\e*
Hence, $\P'\in \Mc_{\eps+r}(\bn)$ and  $\bn\in \Pc_{\eps+r}^{\pq}$. To conclude the proof, notice that 
\b*
&&\E_{\P}\big[c(\bS_1,\ldots, \bS_N)\big]-\pp_{\eps+r}(\bn) 
~\le~  \E_{\P}\big[c(\bS_1,\ldots, \bS_N)\big]-\E_{\P'}\big[c(\bS_1,\ldots, \bS_N)\big] \\
&=& \E_{\Qc}\big[c(X_1,\ldots, X_N)-c(Y_1,\ldots, Y_N)\big] ~\le~ {\li}(c)\sum_{k=1}^N \E_{\Qc}\big[\big|X_k-Y_k\big|\big] 
~=~ {\li}(c) r,
\e*
which yields $\pp_{\eps}(\bm)\le \pp_{\eps+r}(\bn)+{\li}(c) r$ since $\P\in \Mc_{\eps}(\bm)$ is arbitrary.
\end{proof}

In consequence, the corollary below follows immediately.

\begin{corollary}\label{cor1}
Let $(\bm^n)_{n\ge 1}$ and $(\eps_n)_{n\ge 1}$ be the   sequences in Theorem \ref{thm:general}. Then 
\b*
 \pp(\bm)~ \le~ \pp_{\eps_n}(\bm^n) + {\li}(c)\eps_n 
  ~\le~  \pp_{2\eps_n}(\bm) + 2{\li}(c)\eps_n,~ \mbox{ for all } n\ge 1.
\e*
\end{corollary}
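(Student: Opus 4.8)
The plan is to obtain Corollary \ref{cor1} as an immediate consequence of Proposition \ref{prop:approx}, applied twice, together with the elementary monotonicity of the relaxed value in the relaxation parameter. That monotonicity is worth isolating first: if $0\le\eps\le\eps'$ then $\Mc_{\eps}(\bn)\sb\Mc_{\eps'}(\bn)$ directly from \eqref{def:rtp2}, so taking suprema gives $\pp_{\eps}(\bn)\le\pp_{\eps'}(\bn)$. Since by hypothesis $\eps_n\ge r_n:=\Wc^{\op}(\bm^n,\bm)$, this yields in particular $\pp_{r_n}(\bm^n)\le\pp_{\eps_n}(\bm^n)$ and $\pp_{\eps_n+r_n}(\bm)\le\pp_{2\eps_n}(\bm)$.

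For the left inequality I would invoke Proposition \ref{prop:approx} with the roles ``$\bm$'' $=\bm$, ``$\eps$'' $=0$ and ``$\bn$'' $=\bm^n$; this is legitimate because $\bm\in\Pc^{\pq}=\Pc_0^{\pq}$ by assumption. With $r=\Wc^{\op}(\bm,\bm^n)=r_n$ the proposition gives $\bm^n\in\Pc_{r_n}^{\pq}$ (hence $\Mc_{\eps_n}(\bm^n)\neq\emptyset$ and $\pp_{\eps_n}(\bm^n)$ is a genuine real number) together with $\pp(\bm)=\pp_0(\bm)\le\pp_{r_n}(\bm^n)+{\li}(c)r_n$. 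Combining with the monotonicity above and $r_n\le\eps_n$ gives $\pp(\bm)\le\pp_{\eps_n}(\bm^n)+{\li}(c)\eps_n$.

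For the right inequality I would apply Proposition \ref{prop:approx} a second time, now with ``$\bm$'' $=\bm^n$, ``$\eps$'' $=\eps_n$ and ``$\bn$'' $=\bm$; the hypothesis $\bm^n\in\Pc_{\eps_n}^{\pq}$ needed here was just established. With $r=\Wc^{\op}(\bm^n,\bm)=r_n$ the proposition yields $\pp_{\eps_n}(\bm^n)\le\pp_{\eps_n+r_n}(\bm)+{\li}(c)r_n\le\pp_{2\eps_n}(\bm)+{\li}(c)\eps_n$, again using monotonicity and $r_n\le\eps_n$. Adding ${\li}(c)\eps_n$ to both sides and chaining with the first bound produces the asserted two-sided estimate. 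There is essentially no obstacle at this stage: all the substance lies in Proposition \ref{prop:approx} (the Skorokhod-type coupling that perturbs an $\eps$-approximating martingale measure into an $(\eps+r)$-approximating one for the neighbouring marginals), and here one only needs to keep careful track of which vector of marginals plays which role and to verify that the relaxation levels stay finite so that every $\pp_{\cdot}(\cdot)$ appearing is well defined.
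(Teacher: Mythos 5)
Your argument is correct and follows exactly the paper's own route: apply Proposition \ref{prop:approx} once with $\eps=0$, $\bn=\bm^n$ to get $\pp(\bm)\le\pp_{r_n}(\bm^n)+{\li}(c)r_n\le\pp_{\eps_n}(\bm^n)+{\li}(c)\eps_n$, and once with the roles of $\bm$ and $\bm^n$ interchanged (at level $\eps_n$) for the second inequality, using in both cases the monotonicity of $\eps\mapsto\pp_\eps$ that follows from $\Mc_{\eps}\sb\Mc_{\eps'}$ for $\eps\le\eps'$. Your additional care in verifying $\bm^n\in\Pc^{\pq}_{\eps_n}$ before the second application is a nice touch but does not change the substance.
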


\begin{proof}
Taking $\eps=0$, $\bn=\bm^n$ and $r=r_n$, one has $\pp(\bm)\le \pp_{r_n}(\bm^n)+{\li}(c)r_n\le \pp_{\eps_n}(\bm^n)+{\li}(c)\eps_n$, where $\pp_{r_n}(\bm^n)\le \pp_{\eps_n}(\bm^n)$ follows by definition. The second inequality follows with the same arguments but interchanging $\bm$ and $\bm^n$. 
\end{proof}

To complete the proof of Theorem \ref{thm:general}, it remains to show $\pp_{2\eps_n}(\bm)\to \pp(\bm)$ as $n\to\infty$. 

\begin{proposition}\label{prop:rusc}
Let $c$ be Lipschitz.

\vspace{1mm}

\no \rmi For every fixed $\eps\ge 0$, the map $
\Pc_{\eps}^{\pq} \ni \bm \mapsto   \pp_{\eps}(\bm)\in \R$
is upper semicontinuous under $\Wc^{\oplus}$.

\vspace{1mm}

\no \rmii For every fixed $\bm\in\Pc^{\pq}$, the map $
[0,\infty)\ni\eps\mapsto \pp_{\eps}(\bm)\in \R$ 
is non-decreasing, continuous and concave.
\end{proposition}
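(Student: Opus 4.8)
The plan is to establish part (ii) first and then deduce part (i) from it together with Proposition~\ref{prop:approx}. Within (ii), monotonicity is immediate since $\eps_1\le\eps_2$ implies $\Mc_{\eps_1}(\bm)\subseteq\Mc_{\eps_2}(\bm)$, so that the supremum in \eqref{def:rmotp} is over a larger set; moreover $\pp_\eps(\bm)$ is finite and real-valued because $c$ is Lipschitz and every $\P\in\Pc(\bm)$ has the prescribed marginals, which have finite first moments, whence $|\E_\P[c]|\le{\li}(c)\sum_{k=1}^N\int|\bx|\,\m_k(d\bx)+|c(\0,\ldots,\0)|<\infty$. For concavity I would use the \emph{linear} reformulation \eqref{def:rtp} of the relaxed martingale constraint: for $\eps_1,\eps_2\ge0$ and $\lambda\in[0,1]$, pick optimizers $\P_i\in\Mc_{\eps_i}(\bm)$ for $\pp_{\eps_i}(\bm)$ (which exist by Theorem~\ref{thm:duality}, as $c$ is u.s.c.\ with linear growth) and form $\P_\lambda:=\lambda\P_1+(1-\lambda)\P_2$. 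Then $\P_\lambda$ still has marginals $\m_1,\ldots,\m_N$, and for every $h\in\Cc_b(\Om^k;\R^d)$ linearity of the integral together with $\P_i\in\Mc_{\eps_i}(\bm)$ gives $\E_{\P_\lambda}[h(\bS_1,\ldots,\bS_k)\cdot(\bS_{k+1}-\bS_k)]\le(\lambda\eps_1+(1-\lambda)\eps_2)\|h\|_\infty$, so $\P_\lambda\in\Mc_{\lambda\eps_1+(1-\lambda)\eps_2}(\bm)$ and hence $\pp_{\lambda\eps_1+(1-\lambda)\eps_2}(\bm)\ge\E_{\P_\lambda}[c]=\lambda\,\pp_{\eps_1}(\bm)+(1-\lambda)\,\pp_{\eps_2}(\bm)$. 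Since a finite concave function on an open interval is automatically continuous, continuity on $(0,\infty)$ follows, and only right-continuity at $0$ remains.

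I would prove right-continuity in a slightly more general form than (ii) needs, namely: for any $\bn\in\Pc_{\eps_0}^{\pq}$ the map $\eps'\mapsto\pp_{\eps'}(\bn)$ is right-continuous at $\eps_0$; the case $\bn=\bm$, $\eps_0=0$ completes (ii), and the general statement feeds into (i). By monotonicity the right-limit $L:=\lim_{\eps'\downarrow\eps_0}\pp_{\eps'}(\bn)$ exists and satisfies $L\ge\pp_{\eps_0}(\bn)$; pick $\eps_n\downarrow\eps_0$ and optimizers $\P_n\in\Mc_{\eps_n}(\bn)$ (Theorem~\ref{thm:duality}). For $n$ large $\P_n$ lies in the weakly compact set $\Mc_{\eps_1}(\bn)$, so along a subsequence $\P_n\to\P^*\in\Pc(\bn)$ weakly. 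The crucial point is that, all $\P_n$ sharing the fixed marginals $\n_1,\ldots,\n_N$, the variables $|\bS_1|,\ldots,|\bS_N|$ are uniformly integrable under $(\P_n)$, so weak convergence upgrades to $\E_{\P_n}[g]\to\E_{\P^*}[g]$ for every continuous $g$ of at most linear growth — in particular for $g=h(\bS_1,\ldots,\bS_k)\cdot(\bS_{k+1}-\bS_k)$ with $h\in\Cc_b(\Om^k;\R^d)$ and for $g=c$. Passing \eqref{def:rtp} to the limit then gives $\P^*\in\Mc_{\eps_0}(\bn)$, while $L=\lim_n\pp_{\eps_n}(\bn)=\lim_n\E_{\P_n}[c]=\E_{\P^*}[c]\le\pp_{\eps_0}(\bn)$; combined with $L\ge\pp_{\eps_0}(\bn)$ this forces $L=\pp_{\eps_0}(\bn)$.

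For (i), let $(\bm^n)_{n\ge1}\subset\Pc_{\eps}^{\pq}$ converge to $\bm\in\Pc_{\eps}^{\pq}$ under $\Wc^{\oplus}$ and set $r_n:=\Wc^{\oplus}(\bm^n,\bm)\to0$. By Proposition~\ref{prop:approx}, applied with the roles of $\bm$ and $\bn$ there taken by $\bm^n$ and $\bm$, one has $\pp_{\eps}(\bm^n)\le\pp_{\eps+r_n}(\bm)+{\li}(c)\,r_n$ for every $n$. Taking $\limsup_n$ and invoking the right-continuity of $\eps'\mapsto\pp_{\eps'}(\bm)$ at $\eps'=\eps$ established above, $\limsup_n\pp_{\eps}(\bm^n)\le\lim_n\pp_{\eps+r_n}(\bm)=\pp_{\eps}(\bm)$, which is exactly upper semicontinuity. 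I expect the main obstacle to be the uniform-integrability step in the right-continuity argument: both the test integrand in \eqref{def:rtp} and the reward $c$ have only linear growth, so plain weak convergence of $(\P_n)$ does not by itself transfer the constraint or the objective to the limit, and one must exploit that a coupling with fixed, first-moment-integrable marginals automatically carries the required uniform integrability.
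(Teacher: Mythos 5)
Your proof is correct, but its organization differs from the paper's. The paper proves (i) directly, and in a slightly stronger joint form: it takes sequences $\eps_n\to\eps$ \emph{and} $\bm^n\to\bm$ simultaneously, picks near-optimizers $\P_n\in\Mc_{\eps_n}(\bm^n)$, and uses Lemma \ref{lem:compact} to extract a subsequence converging in the Wasserstein metric on $\Pc(\Om^N)$, along which both the relaxed constraint \eqref{def:rtp} and $\E_{\P_n}[c]$ pass to the limit; right-continuity of $\eps\mapsto\pp_{\eps}(\bm)$ at $0$ is then obtained in (ii) as the special case $\bm^n=\bm$ of this argument, with concavity handled exactly as you do. You reverse the logic: you prove (ii) in full first, establishing right-continuity (at a general $\eps_0$) by a fixed-marginal compactness argument — tightness of $\Pc(\bn)$ plus uniform integrability of $|\bS_1|,\dots,|\bS_N|$ coming from the fixed first-moment-integrable marginals, which upgrades weak convergence enough to pass the linear-growth integrands $h\cdot(\bS_{k+1}-\bS_k)$ and $c$ to the limit — and then deduce (i) from (ii) together with the quantitative estimate $\pp_{\eps}(\bm^n)\le\pp_{\eps+r_n}(\bm)+{\li}(c)r_n$ of Proposition \ref{prop:approx}. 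Your UI step is exactly the analytic content that Lemma \ref{lem:compact} (Wasserstein convergence of couplings with converging marginals) supplies in the paper, so the ingredients largely coincide; what your route buys is that the only compactness argument needed is for fixed marginals, and (i) becomes a one-line consequence of Proposition \ref{prop:approx} plus monotone right-continuity — in fact it mirrors how the paper later combines Corollary \ref{cor1} with Proposition \ref{prop:rusc}(ii) to prove Theorem \ref{thm:general}(i). What you lose is the stronger joint statement in $(\eps_n,\bm^n)$, which the paper reuses verbatim in the proof of Theorem \ref{thm:general}(ii) to show that limit points of the optimizers $\P_n$ are optimizers for $\pp(\bm)$; your argument would have to be rerun with varying marginals (essentially reinstating Lemma \ref{lem:compact}) to recover that. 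For the proposition as stated, your proof is complete; minor remarks: existence of optimizers via Theorem \ref{thm:duality} is not really needed for concavity (near-optimizers suffice), and your finiteness bound is indeed what guarantees that the concave map on $(0,\infty)$ is real-valued, hence continuous.
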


Before proving Proposition \ref{prop:rusc}, let us remark that, together with Corollary \ref{cor1} and Lemma \ref{lem:compact}, it yields an instant proof of our main result:
 
\begin{proof}[Proof of Theorem \ref{thm:general}]
\rmi We have $\Mc_{\eps_n}(\bm^n)\neq\emptyset$ from Proposition \ref{prop:approx}. Corollary \ref{cor1} yields $-{\li}(c)\eps_n \le  \pp_{\eps_n}(\bm^n)-\pp(\bm) \le   \big(\pp_{2\eps_n}(\bm)-\pp(\bm)\big)+{\li}(c)\eps_n$ for all $n\ge 1$, and Proposition \ref{prop:rusc} gives $\lim_{n\to\infty} \pp_{\eps_n}(\bm^n)=\pp(\bm)$.

\vspace{1mm}

\no\rmii  By Theorem \ref{thm:duality}, we know the existence of optimizer $\P_n$ for all $n\ge 1$. As $\P_n\in\Mc_{\eps_n}(\bm^n)\subset\Pc(\bm^n)$, it follows from Lemma \ref{lem:compact} that, $(\P_n)_{n\ge 1}$ is tight and every limit point must belong to $\Pc(\bm)$. Take an arbitrary convergent subsequence $(\P_{n_k})_{k\ge 1}$ with limit $\P$, then $\P\in\Pc(\bm)$. Repeating the proof of Proposition \ref{prop:rusc} {\rm (i)}, one deduces that $\P\in\Mc(\bm)$ and $\P$ is thus an optimizer for $\pp(\bm)$. If $\pp(\bm)$ has a unique optimizer, then every convergent subsequence of $(\P_n)_{n\ge 1}$ has the same limit, which shows that $(\P_n)_{n\ge 1}$ converges weakly as $\Pc(\Om^N)$ is Polish.
\end{proof}

\begin{proof}[Proof of Proposition \ref{prop:rusc}]
 \rmi We establish a slightly stronger property. Take two sequences $(\eps_n)_{n\ge 1}\subset [0,\infty)$ and $(\bm^n)_{n\ge 1}\subset\Pc_{\eps_n}^{\pq}$ with limits $\eps$ and $\bm$. Let $\P_n\in \Mc_{\eps_n}(\bm^n)$ satisfy $
 \limsup_{n\to\infty}\pp_{\eps_n}(\bm^n) =  \limsup_{n\to\infty}\E_{\P_n}\big[c(\bS_1,\ldots, \bS_N)\big]$. 
 Up to passing to a subsequence, we may assume that $\limsup_{n\to\infty}\E_{\P_n}[c]=\lim_{n\to\infty}\E_{\P_n}[c]$, and further by Lemma \ref{lem:compact} that $(\P_n)_{n\ge 1}$ converges in Wasserstein sense to some limit $\P\in\Pc(\bm)$. For every  $k=1,\ldots, N-1$ and $h\in \Cc_{b}(\Om^k;\R^d)$, one has $\E_{\P_n}\big[h(\bS_1,\ldots, \bS_k)\cdot (\bS_{k+1}-\bS_k)\big] \le \eps_n\|h\|_{\infty}$ for all $n\ge 1$ and hence for the limiting measures, which implies $\P\in \Mc_{\eps}(\bm)$.
 Similarly, Lipschitz continuity of $c$ gives
 \b*
\limsup_{n\to\infty} \pp_{\eps_n}(\bm^n)~=~ \lim_{n\to\infty}\E_{\P_n}\big[c(\bS_1,\ldots, \bS_N)\big] ~= ~ \E_{\P}\big[c(\bS_1,\ldots, \bS_N)\big] ~\le~ \pp_{\eps}(\bm).
 \e*
 \rmii We first prove the concavity. Given $\eps$, $\eps'\ge 0$ and $\alpha\in [0,1]$, it remains to show $(1-\alpha)\pp_{\eps}(\bm)+\alpha \pp_{\eps'}(\bm) \le \pp_{\eps_{\alpha}}(\bm)$,  where $\eps_{\alpha}:=(1-\alpha)\eps+\alpha\eps'$. This indeed follows from the fact that $(1-\alpha)\P+\alpha\P'\in \Mc_{\eps_{\alpha}}(\bm)$ for all $\P\in \Mc_{\eps}(\bm)$ and $\P'\in \Mc_{\eps'}(\bm)$. In particular, the map restricted to $(0,\infty)$ is continuous. Finally, the reasoning in {\rm (i)} above, with $\bm^n=\bm$ and $\eps_n\to 0$, gives $\lim_{n\to\infty} \pp_{\eps_n}(\bm)=\pp(\bm)$ which combined with the obvious reverse inequality yields the right continuity at $\eps=0$.
\end{proof}

\begin{lemma}\label{lem:compact}
Let $(\bm^n)_{n\ge 1}\subset\Pc^{N}$ be a sequence converging to $\bm\in\Pc^N$ under $\Wc^{\oplus}$, and $\P_n\in \Pc(\bm^n)$ for all $n\ge 1$. Then there exists a subsequence $(\P_{n_k})_{k\ge 1}$ converging in Wasserstein metric on $\Pc(\Omega^N)$ and its limit $\P$ belongs to $\Pc(\bm)$.
\end{lemma}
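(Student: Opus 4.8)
The plan is to obtain relative compactness of $(\P_n)_{n\ge 1}$ in the Wasserstein metric on $\Pc(\Om^N)$ from the behaviour of the one–dimensional marginals, extract a convergent subsequence, identify the marginals of its limit, and then upgrade weak convergence of that subsequence to Wasserstein convergence. The key input is that, since $\Wc^{\oplus}(\bm^n,\bm)\to0$, each coordinate sequence $(\m^n_k)_{n\ge 1}$ converges to $\m_k$ for $\Wc$.

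\emph{Tightness and extraction.} First I would note that, for each $k$, the sequence $(\m^n_k)_{n\ge 1}$ is $\Wc$-convergent, hence relatively compact, in particular tight, in $(\Pc,\Wc)$. Because $\P_n\circ\bS_k^{-1}=\m^n_k$, given $\eta>0$ one can choose compacts $K_k\subset\Om$ with $\m^n_k[\Om\setminus K_k]\le\eta/N$ for all $n$ and all $k$, so that $\P_n\big[\Om^N\setminus(K_1\times\cdots\times K_N)\big]\le\eta$ for all $n$; hence $(\P_n)_{n\ge1}$ is tight on $\Om^N$. By Prokhorov's theorem some subsequence $(\P_{n_j})_{j\ge1}$ then converges weakly to a probability measure $\P$ on $\Om^N$.

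\emph{Identification of the limit and upgrade.} Each coordinate map $\bS_k\colon\Om^N\to\Om$ is continuous, so $\P_{n_j}\circ\bS_k^{-1}\stackrel{\Lc}{\lro}\P\circ\bS_k^{-1}$; on the other hand $\P_{n_j}\circ\bS_k^{-1}=\m^{n_j}_k\stackrel{\Lc}{\lro}\m_k$, whence $\P\circ\bS_k^{-1}=\m_k$ for every $k$, so $\P\in\Pc(\bm)$ and, since each $\m_k$ has a finite first moment, $\P\in\Pc(\Om^N)$. To pass from weak convergence to convergence of $(\P_{n_j})_{j\ge1}$ in the Wasserstein metric on $\Pc(\Om^N)$ I would invoke the characterisation recalled after \eqref{def:wass_dual}: it suffices to add to the weak convergence already obtained the convergence $\int|(\bx_1,\ldots,\bx_N)|\,\P_{n_j}(d\bx)\to\int|(\bx_1,\ldots,\bx_N)|\,\P(d\bx)$. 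The left-hand side equals $\sum_{k=1}^N\int|\bx|\,\m^{n_j}_k(d\bx)$, which converges term by term to $\sum_{k=1}^N\int|\bx|\,\m_k(d\bx)$ because $\Wc(\m^{n_j}_k,\m_k)\to0$, and this last sum equals $\int|(\bx_1,\ldots,\bx_N)|\,\P(d\bx)$ by the marginal identification just obtained.

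The argument is largely routine, and I do not expect a genuine obstacle; the one point that needs care is exactly this moment bookkeeping in the last step — one must exploit $\Wc$-convergence of the marginals (not merely their weak convergence) precisely in order to convert weak convergence of the joint laws into convergence for the Wasserstein metric, which is what the statement asks for.
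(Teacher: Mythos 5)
Your proof is correct and follows essentially the same route as the paper: tightness of $(\P_n)_{n\ge1}$ deduced from control of the marginals, extraction of a weakly convergent subsequence, identification of the limit's marginals via continuity of the projections $\bS_k$, and the upgrade to $\Wc$-convergence from the convergence of first moments (using $\int|(\bx_1,\ldots,\bx_N)|\,d\P_{n_j}=\sum_k\int|\bx|\,d\m_k^{n_j}$). The only cosmetic difference is that the paper obtains tightness by a Markov-inequality bound $\P_n[E_R^c]\le R^{-1}\sup_n\sum_k\int|\bx|\,\m_k^n(d\bx)$ on boxes $E_R$, whereas you invoke tightness of each convergent marginal sequence directly; both are equally valid.
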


\begin{proof}
Taking the compact $E_R:=\big\{(\bx_1,\ldots, \bx_N)\in \Om^N: |\bx_k|\le R, \mbox{ for } k=1,\ldots, N\big\}$. One has
\b*
\P_n[E_R^c] ~ \le~ \sum_{k=1}^N \int_{\R^d} {\mathds 1}_{[R,\infty)}(|\bx|) \mu_k^n(d\bx) ~ \le ~ \frac{1}{R}  \sup_{n\ge 1}~ \left\{\sum_{k=1}^N \int_{\R^d} |\bx|\mu^n_k(d\bx)\right\}.
\e*
Further, the convergence under $\Wc^{\oplus}$ implies that 
\b*
\lim_{n\to\infty} \int_{\R^d} |\bx|\mu^n_k(d\bx) ~=~ \int_{\R^d} |\bx|\mu_k(d\bx),~ \mbox{ for } k=1,\ldots, N,
\e*
which yields the tightness of $(\P_n)_{n\ge 1}$ since $\lim_{R\to\infty}\big\{\sup_{n\ge 1}\P_n[E_R^c]\big\} = 0$. This implies there exists a weakly convergent subsequence, which we still denote $(\P_n)_{n\ge 1}$ and let $\P$ be its limit. Notice that the projection map $\bS_k$ is continuous, then $\mu^n_{k}=\P_{n}\circ \bS_k^{-1}$ also converges weakly to $\P\circ \bS_k^{-1}$ for $k=1,\ldots, N$, which shows $\P\in\Pc(\bm)$. Finally, the convergence of $\P_n$ to $\P$ in the Wasserstein sense follows from the convergence of first moment.
 \end{proof}

\subsection{Convergence rate analysis: $N=2$ and $d=1$}\label{ssec:convergence}

This section concerns the estimation of the convergence rate for the one-step model in dimension one. The duality in Theorem \ref{thm:duality} plays an important role and is used repeatedly. To the best of our knowledge, the error bound in Theorem \ref{thm:convergence_rate} is the first of its kind in the literature. Fix a pair $(\m,\n)\in\Pc^{\pq}$. Throughout Section \ref{ssec:convergence}, we stress the dependence on $c$ and write $\pp^c_{\eps}(\m,\n)\equiv \pp_{\eps}(\m,\n)$, $\Dc_{\eps}^{c}\equiv\Dc_{\eps}$ and  $\dd^c_{\eps}(\m,\n)\equiv \dd_{\eps}(\m,\n)$. Clearly, for any $c_1$ and $c_2:\R^2\to\R$,  it holds that $
\pp_{\eps}^{c_1+c_2}(\m,\n) \le   \pp_{\eps}^{c_1}(\m,\n)+ \pp_{\eps}^{c_2}(\m,\n)$. In view of Corollary \ref{cor1}, one has
\b*
\big|\pp^c_{\eps_n}(\m^n,\n^n)-\pp^c(\m,\n)\big| ~\le ~  \big(\pp^c_{2\eps_n}(\m,\n)-\pp^c(\m,\n)\big) + {\li}(c)\eps_n,
\e*
where $\eps_n\ge \Wc^{\oplus}\big((\m^n,\n^n),(\m,\n)\big)$ for all $n\ge 1$. For the purpose of estimating the difference $\big|\pp^c_{\eps_n}(\m^n,\n^n)-\pp^c(\m,\n)\big|$, we need to understand the asymptotic behavior of $\pp^c_{\eps}(\m,\n)-\pp^c(\m,\n)$ as $\eps$ goes to $0$, which is shown in the proof of Theorem \ref{thm:convergence_rate}.

\begin{proof}[Proof of Theorem \ref{thm:convergence_rate}]
Set $L:=\max\big({\li}(c), \sup_{(x,y)\in\R^2}\big|\partial^2_{yy}c(x,y)\big|\big)<\infty$ and introduce $c_L(x,y):=c(x,y)-Ly^2/2$. Then, for each $x\in\R$, the map $y\mapsto c_L(x, y)$ is concave. Further, let us truncate $c_L$ by an affine function with respect to $y$. Namely, define for every $R\ge 0$ 
\b*
c_L^R(x,y)~:=~ \begin{cases}
   c_L(x,-R) +  (y+R)\partial_y c_L(x,-R) &\mbox{if } y\le -R,\\
      c_L(x,y)   &\mbox{if } -R<y\le R,\\
  c_L(x,R) + (y-R)\partial_y c_L(x,R) &\mbox{otherwise}.
   \end{cases}
\e*
 It follows by a straightforward computation    that, $y\mapsto c_L^R(x,y)$ is concave and ${\li}(c_L^R)\le L_R:= L(R+1)$. In view of Remark 2.6 in Beiglb\"ock et al. \cite{BLO}, there exists an optimizer $(H^*, \vp^*,\ps^*)\in\Dc^{c_L^R}$ for the dual problem $\dd^{c_L^R}(\m,\n)$ such that $\|H^*\|_{\infty}\le 18L_R$ and $\vp^*$, $\ps^*\in\Lam_{19L_R}$, i.e. $
H^*(x)(y-x)+\vp^*(x)+\ps^*(y) \ge c_L^R(x,y)$ for all $(x, y)\in\R^2$ and 
\b*
\dd^{c_L^R}(\m,\n) ~=~ \int\vp^*d\m+\int\ps^*d\n.
\e*
For each $\eps\ge 0$, it follows from the duality $ \pp^{c_L^R}_{\eps}(\m,\n)= \dd^{c_L^R}_{\eps}(\m,\n)$ that 
\b*
&&\hspace*{-0.8cm}\left | \pp_{\eps}^{c_L^R}(\m,\n)-\pp^{c_L^R}(\m,\n) \right |~=~ \left |\dd_{\eps}^{c_L^R}(\m,\n)-\dd^{c_L^R}(\m,\n) \right | ~= ~ \left |\dd_{\eps}^{c_L^R}(\m,\n)-\left[\int \vp^*d\m+\int\ps^* d\n\right] \right |\\
&\le & \left | \left[\int \big(\vp^*+\eps \|H^*\|_{\infty}\big)d\m+\int\ps^* d\n\right]- \left[\int \vp^*d\m+\int\ps^* d\n\right] \right |~=~  \eps\int \|H^*\|_{\infty} d\m ~\le ~  18\eps L_R,
\e*
where the third inequality holds as $\big(H^*, \vp^*+\eps\|H^*\|_{\infty}, \ps^*\big)\in\Dc^{c_L^R}_{\eps}$. In addition, one has by construction $\big|c_L(x,y)-c_L^R(x,y)\big| \le  {\mathds 1}_{(-\infty,-R)\cup(R,\infty)}(y)L\big(|y|-R\big)^2$, which implies that
\b*
\left |\pp^{c_L^R}_{\eps}(\m,\n) - \pp^{c_L}_{\eps}(\m,\n) \right|  ~ \le~  L\int_{(-\infty,-R)\cup(R,\infty)} \big(|y|-R\big)^2\n(dy).
\e*
Therefore,
\b*
&&\left | \pp_{\eps}^c(\m,\n)-\pp^c(\m,\n) \right |~=~  \left | \pp_{\eps}^{c_L}(\m,\n)-\pp^{c_L}(\m,\n) \right |\\
&\le &\left |\pp_{\eps}^{c_L}(\m,\n)-\pp_{\eps}^{c_L^R}(\m,\n)\right | + \left |\pp_{\eps}^{c_L^R}(\m,\n)-\pp^{c_L^R}(\m,\n)\right | + \left |\pp^{c_L^R}(\m,\n)-\pp^{c_L}(\m,\n)\right | \\
&\le &18\eps L(R+1) + 2L\int_{(-\infty,-R)\cup(R,\infty)} \big(|y|-R\big)^2\n(dy),
\e*
which fulfills the proof by setting $\eps=2\eps_n$.
\end{proof}

\begin{remark}
If $\n$ is supported on some closed subset $E\sb\R$, then Theorem \ref{thm:convergence_rate} still holds by assuming that $c$ is Lipschitz on $E^2$ and $\sup_{(x,y)\in E^2}\big|\partial^2_{yy}c(x,y)\big|<\infty$. In addition, it is worth mentioning that the above analysis can be extended to more general functions $c$. Let $c$ be continuous and with linear growth, i.e. $|c(x,y)|\le L(1+|x|+|y|)$ for some $L>0$. Then for every $R\ge 1$, there exists a function $c_R\in\Cc^2(\R^2)$ such that $\sup_{(x,y)\in\bB_R}\big|c(x,y)-c_R(x,y)\big|\le 1/R$, $c_{R}(x,y)=0$ for $(x,y)\notin \bB_{R+1}$ and $\|c_R\|_{\infty} \le \sup_{(x,y)\in \bB_R}|c(x,y)| \le  L(1+2R)$. Further, one has $
\big|c(x,y)-c_R(x,y)\big| 
\le 1/R+ 8L\big(|x|^2+|y|^2\big)/R$, which implies that
 \b*
\big |\pp_{\eps}^{c}(\m,\n)-\pp_{\eps}^{c_R}(\m,\n)\big |~ \le ~ 1/R + 8L\left(\int_{\R} |x|^2\m(dx)+\int_{\R} |y|^2\n(dy)\right)/R ~ =:~ L'/R,
 \e*
Hence, we obtain using the same reasoning $
\big | \pp_{\eps}^c(\m,\n) - \pp^c(\m,\n) \big |
\le\big | \pp_{\eps}^{c_R}(\m,\n)-\pp^{c_R}(\m,\n)\big |+ 2L'/R$. Then $c_R$ satisfies the conditions of Theorem \ref{thm:convergence_rate}. For every $R\ge 1$, using Theorem \ref{thm:convergence_rate}  we deduce a bound on the difference $\big | \pp_{\eps}^{c_R}(\m,\n)-\pp^{c_R}(\m,\n)\big |$. The result can then be optimized over all $R\ge 1$.
\end{remark}

Following the proof of Theorem  \ref{thm:convergence_rate}, we provide below a stability result for the map $\Pc^{\pq} \ni (\m,\n)\mapsto \pp(\m,\n)\in\R$.

\begin{proposition}\label{prop:conti}
Let $\Pc^{\pq}_2\subset \Pc^{\pq}$ be the subset of $(\m,\n)$ with $\n$ having a finite second moment. If $c$ satisfies the conditions of Theorem \ref{thm:convergence_rate}, then there exists $C>0$ such that 
\b*
\big|\pp^c(\m',\n')-\pp^c(\m,\n)\big| ~\le~ C\inf_{R>0} \tilde{\lambda}(R) + \frac{L}{2}\left|\int_{\R}y^2\big(\n'(dy) -\n(dy)\big)\right| \mbox{ for all } (\m,\n),~ (\m',\n')\in \Pc^{\pq}_2, 
\e*
where $\tilde\lambda: (0,\infty)\to \R$ is defined by
\b*
\tilde{\lambda}(R) ~:=~ (R+1) \Wc^{\oplus}\big((\m',\n'),(\m,\n)\big)  + \int_{(-\infty,-R)\cup(R,\infty)} \big(|y|-R\big)^2\big(\n'(dy)+\n(dy)\big).
\e*
For any sequence $\big((\m^n,\n^n)\big)_{n\ge 1}\subset \Pc^{\pq}_2$ satisfying 
 $\lim_{n\to\infty}\Wc^{\oplus}\big((\m^n,\n^n),(\m,\n)\big) = 0$, one has $\lim_{n\to\infty}\pp^c(\m^n,\n^n)= \pp^c(\m,\n)$ if $\lim_{n\to\infty}\int y^2 \n^n(dy)= \int y^2 \n(dy)$.
\end{proposition}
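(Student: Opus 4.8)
The plan is to adapt the argument behind Theorem \ref{thm:convergence_rate}, using the relaxed problem $\pp_{\eps}$ as a bridge between the two pairs of marginals. Set $L := \max\big({\li}(c),\, \sup_{(x,y)\in\R^2}|\partial^2_{yy}c(x,y)|\big)$ and, exactly as in that proof, put $c_L(x,y) := c(x,y) - Ly^2/2$, so that $y\mapsto c_L(x,y)$ is concave. The first step is the elementary observation that $\pp^c(\m,\n)=\pp^{c_L}(\m,\n)+\tfrac L2\int y^2\,d\n$, because the term $\tfrac L2 y^2$ contributes $\tfrac L2\int y^2\,d\n$ to $\E_{\P}[c]$ for \emph{every} $\P\in\Mc(\m,\n)$; likewise $\pp^c(\m',\n')=\pp^{c_L}(\m',\n')+\tfrac L2\int y^2\,d\n'$, all quantities being finite since $(\m,\n),(\m',\n')\in\Pc^{\pq}_2$. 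Subtracting, this isolates the explicit term $\tfrac L2\big|\int y^2(\n'(dy)-\n(dy))\big|$ and reduces the claim to bounding $\big|\pp^{c_L}(\m',\n')-\pp^{c_L}(\m,\n)\big|$ by a constant times $\inf_{R>0}\tilde\lambda(R)$.

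Next I would fix $R>0$ and introduce the affine-in-$y$ truncation $c_L^R$ from the proof of Theorem \ref{thm:convergence_rate}, which is concave in $y$, satisfies ${\li}(c_L^R)\le L(R+1)$, and obeys $\big|c_L-c_L^R\big|\le \mathds 1_{\{|y|>R\}}\,L(|y|-R)^2$. Inserting $\pp^{c_L^R}(\m,\n)$ and $\pp^{c_L^R}(\m',\n')$ between $\pp^{c_L}(\m,\n)$ and $\pp^{c_L}(\m',\n')$ via the triangle inequality, the two outer differences are controlled, just as in that proof with $\eps=0$, by $L\int_{\{|y|>R\}}(|y|-R)^2\n(dy)$ and $L\int_{\{|y|>R\}}(|y|-R)^2\n'(dy)$ respectively. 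The real work is the inner difference $\big|\pp^{c_L^R}(\m,\n)-\pp^{c_L^R}(\m',\n')\big|$: writing $r:=\Wc^{\oplus}\big((\m,\n),(\m',\n')\big)$, Proposition \ref{prop:approx} with $\eps=0$ gives $\pp^{c_L^R}(\m,\n)\le \pp^{c_L^R}_{r}(\m',\n')+{\li}(c_L^R)\,r\le \pp^{c_L^R}_{r}(\m',\n')+L(R+1)r$, while feeding into the relaxed dual the optimiser $(H^*,\vp^*,\ps^*)$ of $\dd^{c_L^R}(\m',\n')$ — whose existence, with $\|H^*\|_\infty\le 18L(R+1)$, is guaranteed by Remark 2.6 of \cite{BLO} since $c_L^R$ is concave in $y$, Lipschitz and $(\m',\n')\in\Pc^{\pq}$ — yields, exactly as in the proof of Theorem \ref{thm:convergence_rate}, $\pp^{c_L^R}_{r}(\m',\n')\le \pp^{c_L^R}(\m',\n')+18L(R+1)r$. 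Hence $\pp^{c_L^R}(\m,\n)-\pp^{c_L^R}(\m',\n')\le 19L(R+1)r$, and by symmetry $\big|\pp^{c_L^R}(\m,\n)-\pp^{c_L^R}(\m',\n')\big|\le 19L(R+1)r$. Summing the three bounds and recalling the definition of $\tilde\lambda$ gives $\big|\pp^{c_L}(\m',\n')-\pp^{c_L}(\m,\n)\big|\le 19L\,\tilde\lambda(R)$; taking the infimum over $R>0$ delivers the estimate with $C=19L$.

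For the final convergence assertion I would apply the estimate with $(\m',\n')=(\m^n,\n^n)$, writing $\tilde\lambda_n$ for the resulting function. The explicit second-moment term tends to $0$ by hypothesis, so it remains to check $\inf_{R>0}\tilde\lambda_n(R)\to0$. Since $\Wc(\n^n,\n)\le\Wc^{\oplus}\big((\m^n,\n^n),(\m,\n)\big)\to0$ and $\int y^2\,d\n^n\to\int y^2\,d\n<\infty$, the family $\{y^2\}$ is uniformly integrable along $(\n^n)_{n\ge1}$ — a standard consequence of $\Wc$-convergence together with convergence of second moments, cf.\ \cite{RR} — so for any $\delta>0$ one may fix $R_\delta$ with $\sup_n\int_{\{|y|>R_\delta\}}y^2\,d\n^n\le\delta$ and $\int_{\{|y|>R_\delta\}}y^2\,d\n\le\delta$. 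As $(|y|-R_\delta)^2\le y^2$ on $\{|y|>R_\delta\}$ and $(R_\delta+1)\Wc^{\oplus}\big((\m^n,\n^n),(\m,\n)\big)\to0$, we get $\tilde\lambda_n(R_\delta)\le 3\delta$ for $n$ large, whence $\inf_{R>0}\tilde\lambda_n(R)\to0$ and $\pp^c(\m^n,\n^n)\to\pp^c(\m,\n)$.

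The step I expect to be the main obstacle is the inner comparison $\big|\pp^{c_L^R}(\m,\n)-\pp^{c_L^R}(\m',\n')\big|$: because continuity of $(\m,\n)\mapsto\pp(\m,\n)$ is unavailable in general, this difference cannot be attacked directly and must be routed through the relaxed problem and then closed using the \emph{a priori} bounds on the dual hedge from \cite{BLO}, which are at hand precisely because the truncated cost $c_L^R$ is both Lipschitz and concave in $y$. Everything else is bookkeeping together with the two truncation estimates already appearing in the proof of Theorem \ref{thm:convergence_rate}.
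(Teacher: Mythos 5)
Your proof is correct, and its skeleton coincides with the paper's: the reduction $c\to c_L\to c_L^R$, the two truncation estimates controlled by $L\int_{\{|y|>R\}}(|y|-R)^2\,d\n$ and $L\int_{\{|y|>R\}}(|y|-R)^2\,d\n'$, and the reliance on the dual attainment of Remark 2.6 in \cite{BLO}, ending with the same constant $19L(R+1)$. The one step you handle differently is the middle comparison $\big|\pp^{c_L^R}(\m,\n)-\pp^{c_L^R}(\m',\n')\big|$: the paper stays entirely on the dual side at $\eps=0$, taking the optimal $(H',\vp',\ps')$ for $\dd^{c_L^R}(\m',\n')$ (feasibility of the dual does not depend on the marginals) and bounding $\big[\int\vp'\,d\m+\int\ps'\,d\n\big]-\big[\int\vp'\,d\m'+\int\ps'\,d\n'\big]\le 19L_R\,\Wc^{\oplus}$ via the Lipschitz bounds $\vp',\ps'\in\Lam_{19L_R}$ and Kantorovich duality, then symmetrizes; you instead route through the relaxed primal, invoking Proposition \ref{prop:approx} with $\eps=0$ to pass from $(\m,\n)$ to $\pp^{c_L^R}_{r}(\m',\n')$ and then closing with the bound $\pp^{c_L^R}_{r}(\m',\n')\le\pp^{c_L^R}(\m',\n')+18L_R r$ taken from the proof of Theorem \ref{thm:convergence_rate}, which uses the bound on $\|H^*\|_\infty$ and the relaxed duality of Theorem \ref{thm:duality} for $\eps=r>0$. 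Both routes are valid and give the same constant; the paper's is marginally lighter (only the $\eps=0$ duality and the static potentials are needed), while yours reuses machinery already established for the convergence-rate theorem and avoids comparing dual values directly. You also spell out, via uniform integrability of $y^2$ along $(\n^n)$, why $\inf_{R>0}\tilde\lambda_n(R)\to0$, a point the paper leaves implicit; that argument is correct.
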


\begin{proof}
Similar to Theorem \ref{thm:convergence_rate}, the key is also the duality. First, one has
\b*
\big|\pp^c(\m',\n')- \pp^c(\m,\n)\big| ~\le~ \big | \pp^{c_L}(\m',\n')- \pp^{c_L}(\m,\n) \big | + \frac{L}{2}\left|\int_{\R}y^2\big(\n'(dy) -\n(dy)\big)\right|
\e*
and $\big | \pp^{c_L}(\m',\n')- \pp^{c_L}(\m,\n) \big |\le \big |\pp^{c_L}(\m',\n')- \pp^{c_L^R}(\m',\n')\big |+\big |\pp^{c_L^R}(\m',\n') -\pp^{c_L^R}(\m,\n)\big |+\big |\pp^{c_L^R}(\m,\n) -\pp^{c_L}(\m,\n)\big |$, 
where $c_L$, $c_L^R:\R^2\to\R$ are defined as same as in the proof of  Theorem \ref{thm:convergence_rate}. Repeating the  arguments in the proof of Theorem \ref{thm:convergence_rate}, it holds that 
\b*
\left |\pp^{c_L}(\m',\n') - \pp^{c_L^R}(\m',\n')\right |&\le & L \int_{(-\infty,-R)\cup(R,\infty)} \big(|y|-R\big)^2\n'(dy), \\
\left |\pp^{c_L}(\m,\n) - \pp^{c_L^R}(\m,\n)\right |&\le & L \int_{(-\infty,-R)\cup(R,\infty)} \big(|y|-R\big)^2\n(dy).
\e*
It remains to estimate $\big |\pp^{c_L^R}(\m',\n') -\pp^{c_L^R}(\m,\n)\big |$. Recall that, in view of Remark 2.6 of \cite{BLO}, $\dd^{c_L^R}(\m',\n')$ is attained by $(H', \vp',\ps')\in\Dc^{c_L^R}$, where $\vp'$, $\ps'\in \Lam_{19L_R}$ with $L_R=L(R+1)$. Therefore,
\b*
&& \hspace*{-0.7cm} \pp^{c_L^R}(\m,\n) - \pp^{c_L^R}(\m',\n') ~=~ \dd^{c_L^R}(\m,\n) -\dd^{c_L^R}(\m',\n') \\
&\le & \left[\int \vp' d\m+\int\ps' d\n\right]- \left[\int \vp' d\m' +\int\ps' d\n'\right] \\
&= & \left[\int \vp' d\m- \int\vp' d\m' \right] + \left[\int \ps' d\n - \int\ps' d\n' \right] 
~\le ~ 19L_R\Wc^{\oplus}\big((\m',\n'),(\m,\n)\big).
\e*
Interchanging $(\m,\n)$ and $(\m',\n')$ and using again the above reasoning, one has $| \pp^{c_L^R}(\m',\n') - \pp^{c_L^R}(\m,\n) | \le 19L_R\Wc^{\oplus}\big((\m',\n'),(\m,\n)\big)$, which concludes the proof.
\end{proof}

We now consider a specific discretization introduced by Dolinsky and Soner \cite{DS1}. We define two sequences of measures  supported on $(k/n)_{k\in\Z}$ as follows:
\begin{equation}
\begin{split}\label{eq:dis_pcoc}
\mu^n\big[\big\{k/n\big\}\big]&~ =~ \int_{[(k-1)/n,(k+1)/n)}\big(1-|nx-k|\big) \mu(dx),  \\
\n^n\big[\big\{k/n\big\}\big]&~ =~ \int_{[(k-1)/n,(k+1)/n)} \big(1-|ny-k|\big) \nu(dy). 
\end{split}
\end{equation}
In the potential theoretic terms of Chacon \cite{Chacon}, $\mu^n$ may be defined as the unique measure supported on $(k/n)_{k\in \Z}$ with its potential function agreeing with that of $\mu$ in those points, i.e.
\b*
\int_{\R} \big|k/n-x\big|\mu(dx) ~=~ \int_{\R} \big|k/n-x\big|\mu^n(dx),~ \mbox{ for all } k\in \Z.
\e*
Then we have the following result.

\begin{proposition}\label{thm:bounded}
\rmi With the notations of \eqref{eq:dis_pcoc}, one has  $(\m^n,\n^n)$, $(\m,\m^n)$, $(\n,\n^n)\in\Pc^{\pq}$ and $\Wc^{\oplus}\big((\m^n, \n^n),(\m, \n)\big)\le 2/n$ for all $n\ge 1$.

\vspace{1mm}

\no \rmii  Let the conditions of Theorem \ref{thm:convergence_rate} hold. Then there exists $C>0$ such that  $\big|\pp^c(\m^n,\n^n)-\pp^c(\m,\n)\big|  \le  C\inf_{R>0} \tilde{\lambda}_n(R)$, where $\tilde{\lambda}_n:(0,\infty) \to\R$ is given by
\b*
\tilde{\lambda}_n(R) ~:=~ \frac{R+1}{n} + \int_{(-\infty,-R)\cup(R,\infty)} \big(|y|-R\big)^2\n(dy).
\e*
\end{proposition}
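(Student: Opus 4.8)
The plan is to base everything on one explicit coupling attached to the discretization \eqref{eq:dis_pcoc}. For $\mu\in\Pc(\R)$ and $y\in[k/n,(k+1)/n)$, set $\kappa_n(y,\cdot):=\big(1-(ny-k)\big)\delta_{k/n}+(ny-k)\delta_{(k+1)/n}$. Reading off \eqref{eq:dis_pcoc} one has $\mu^n=\int\kappa_n(y,\cdot)\,\mu(dy)$, $\int z\,\kappa_n(y,dz)=y$, and $\kappa_n(y,\cdot)$ charges only the two grid points straddling $y$, each within distance $1/n$ of $y$. Thus $\pi_\mu(dy,dz):=\mu(dy)\,\kappa_n(y,dz)$ is a martingale coupling of $\mu$ and $\mu^n$ concentrated on $\{|y-z|\le 1/n\}$. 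I would first read off part (i) from this: for $\mu\in\{\m,\n\}$, Strassen's theorem \cite{Strassen} gives $\mu\preceq\mu^n$, so $(\m,\m^n),(\n,\n^n)\in\Pc^{\pq}$, while $\Wc(\mu,\mu^n)\le\E_{\pi_\mu}[|y-z|]\le 1/n$, hence $\Wc^{\oplus}\big((\m^n,\n^n),(\m,\n)\big)\le 2/n$. I would also record that, since $z\mapsto|a-z|$ is affine on $\{k/n,(k+1)/n\}$ whenever $a$ is a grid point, the potential function $U_\mu(a):=\int|a-y|\,\mu(dy)$ satisfies $U_{\mu^n}=U_\mu$ on $(j/n)_{j\in\Z}$; as $U_{\mu^n}$ is convex, $1$-Lipschitz and affine on every $[j/n,(j+1)/n]$, it is precisely the piecewise-linear interpolant of $U_\mu$ through the grid.

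To finish (i) it remains to check $\m^n\preceq\n^n$. From $\m\preceq\n$ we get $U_\m\le U_\n$ and $\int x\,\m(dx)=\int x\,\n(dx)$. Comparing the grid-interpolants: $U_{\m^n}(j/n)=U_\m(j/n)\le U_\n(j/n)=U_{\n^n}(j/n)$ for all $j$, and since both are affine on each $[j/n,(j+1)/n]$ this forces $U_{\m^n}\le U_{\n^n}$ on $\R$; the barycenters coincide as well, the interpolation preserving the asymptotic slopes. By the classical potential-function characterization of the convex order (Chacón \cite{Chacon}), $\m^n\preceq\n^n$, i.e. $(\m^n,\n^n)\in\Pc^{\pq}$.

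For (ii) I would invoke Proposition \ref{prop:conti}. First, $(\m^n,\n^n)\in\Pc^{\pq}_2$: by (i) it lies in $\Pc^{\pq}$, and the martingale property of $\pi_\n$ gives $\int z^2\,\n^n(dz)-\int y^2\,\n(dy)=\E_{\pi_\n}\big[(z-y)^2\big]\le n^{-2}<\infty$. Proposition \ref{prop:conti} (with $L$ the constant from Theorem \ref{thm:convergence_rate}) then yields, for every $R>0$,
\b*
\big|\pp^c(\m^n,\n^n)-\pp^c(\m,\n)\big|~\le~ C\Big[(R+1)\Wc^{\oplus}\big((\m^n,\n^n),(\m,\n)\big)+\int_{\{|y|>R\}}(|y|-R)^2\big(\n^n(dy)+\n(dy)\big)\Big]+\frac{L}{2n^2},
\e*
where I have already bounded $\tfrac L2\big|\int y^2(\n^n-\n)\big|=\tfrac L2\E_{\pi_\n}[(z-y)^2]\le L/(2n^2)$. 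Using $\Wc^{\oplus}\le 2/n$ from (i), the only quantity still to control is the tail integral of $f_R(y):=\big((|y|-R)^+\big)^2$ against $\n^n$.

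Transferring this tail integral from $\n^n$ back to $\n$ is the step I expect to require the most care. Here I would use that $f_R$ is convex, of class $\Cc^1$, with $|f_R'(y)|\le 2(|y|-R)^+$. Applying $\pi_\n$ and $|y-z|\le 1/n$ once more, $\int f_R\,d\n^n=\E_{\pi_\n}[f_R(z)]\le\E_{\pi_\n}[f_R(y)]+\tfrac1n\,\E_{\pi_\n}\big[\sup_{|w-y|\le 1/n}|f_R'(w)|\big]\le\int f_R\,d\n+\tfrac2n\int\big(|y|-(R-\tfrac1n)\big)^+\n(dy)$, and $2ab\le a^2+b^2$ bounds the last integral by $n^{-2}+\int f_{R-1/n}\,d\n$. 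Substituting into the displayed inequality and then choosing $R=R_0+1/n$ (so that $f_R\le f_{R_0}$ and $f_{R-1/n}=f_{R_0}$), every term becomes a constant multiple of $(R_0+1)/n+\int_{\{|y|>R_0\}}(|y|-R_0)^2\,\n(dy)=\tilde\lambda_n(R_0)$, the remaining $n^{-2}$ errors being absorbed into $(R_0+1)/n$ since $R_0>0$. Taking the infimum over $R_0>0$ yields the claim with a constant $C$ depending only on $c$ and $\n$. (Alternatively, one could bypass Proposition \ref{prop:conti} and reprise the truncation-and-duality argument of Theorem \ref{thm:convergence_rate} verbatim for the pair $(\m^n,\n^n)$, whose dual problem is solvable by Theorem \ref{thm:duality} precisely because $\m^n\preceq\n^n$.)
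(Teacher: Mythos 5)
Your argument is correct, and for part (ii) it is essentially the paper's proof: both apply Proposition \ref{prop:conti} to the pair $(\m^n,\n^n)$ and then control the two $\n^n$-dependent terms (the second-moment discrepancy and the truncated tail) using the structure of the discretization \eqref{eq:dis_pcoc}. The genuine difference is in part (i) and in how those $\n^n$-terms are transferred back to $\n$. The paper works on the dual side: it observes that $\int f\,d\m^n=\int f^{(n)}d\m$ where $f^{(n)}$ is the piecewise-linear grid interpolant of $f$, and reads off everything from $f^{(n)}\ge f$ and convexity of $f^{(n)}$ for $f(x)=(x-K)^+$ (giving all three convex-order relations) and from $|f^{(n)}-f|\le 1/n$ for $f\in\Lam_1$ (giving $\Wc\le 1/n$ via \eqref{def:wass_dual}). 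You work on the primal side with the explicit two-point martingale kernel $\kappa_n$, which is the exact dual object ($f^{(n)}(y)=\int f(z)\kappa_n(y,dz)$): this gives $(\m,\m^n),(\n,\n^n)\in\Pc^{\pq}$ and the Wasserstein bound immediately from the coupling, while $\m^n\pq\n^n$ is obtained through Chac\'on's potential-function characterization (agreement of $U_{\m^n}$ with $U_\m$ at grid points plus piecewise affinity) rather than by testing call functions; both routes are valid and of comparable length. In part (ii), your transfer of the tail integral, via the mean-value estimate $|f_R'|\le 2(|y|-R)^+$, Young's inequality and the shift $R=R_0+1/n$, is correct but more roundabout than the paper's bound $\int f_R\,d\n^n\le\int f_R\,d\n+n^{-2}$, which follows in one line from the interpolation error of a function with second derivative bounded by $2$ on each grid cell (the same mechanism also gives the sharper $\big|\int y^2(\n^n-\n)(dy)\big|\le 1/(4n^2)$ versus your $1/n^2$); these differences only affect the unspecified constant $C$, not the rate.
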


\begin{proof}
\rmi For any continuous $f:\R\to \R$, define $f^{(n)}: \R\to\R$ by
\b*
f^{(n)}(x) ~:=~ \big(1+\lfloor nx \rfloor -nx\big)f\big(\lfloor nx \rfloor /n\big) + \big(nx-\lfloor nx \rfloor \big)f\big((1+\lfloor nx \rfloor) /n\big),
\e* 
Then it follows from a straightforward computation that $
\int f d\mu^n=\int f^{(n)} d\mu$ and $\int f d\nu^n=\int f^{(n)} d\nu$.
Take $f\equiv 1$, then $f^{(n)}\equiv 1$, and further $\mu^n$ and $\nu^n$ are well defined probability measures. Moreover, taking $f(x)=|x|$, it is clear that $f^{(n)}=f$ and thus $
\int f d\mu^n=\int f^{(n)} d\mu<\infty$ and $\int f d\n^n=\int f^{(n)} d\n<\infty$. 
To prove $(\m^n,\n^n)$, $(\m,\m^n)$, $(\n,\n^n)\in\Pc^{\pq}$, it suffices to test for $f(x)=(x-K)^+$. It follows easily that $f^{(n)}$ is convex and $f^{(n)}\ge f$ by computation. This implies that $(\m^n,\n^n)$, $(\m,\m^n)$, $(\n,\n^n)\in\Pc^{\pq}$. To end the proof, we notice that $|\int f d\mu^n -\int f d\mu| \le \int |f^{(n)}-f|d\mu \le 1/n$, which yields $\Wc(\mu^n,\mu)\le 1/n$ by \eqref{def:wass_dual}. 

\vspace{1mm}

\no \rmii It suffices to apply Proposition \ref{prop:conti} with $\m':=\m^n$ and $\n':=\n^n$. Using the construction of $\n^n$, one has
\b*
\int_{(-\infty,-R)\cup(R,\infty)} \big(|y|-R\big)^2\n^n(dy) &\le& \int_{(-\infty,-R)\cup(R,\infty)} \big(|y|-R\big)^2\n(dy) + \frac{1}{n^2}, \\
\left|\int_{\R}y^2\big(\n^n(dy) -\n(dy)\big)\right| &\le &  \frac{1}{4n^2},
\e*
which concludes the proof.
\end{proof}

\section{Summary and possible extensions}
\label{sec:extension}
We believe that our paper offers an important and pioneering contribution to computational methods for \textsf{MOT} problems. Our first main result, Theorem \ref{thm:general}, establishes an approximation result of a general  \textsf{MOT} problem $\pp(\bm)$ via \textsf{LP} problems by discretizing the marginal distributions and relaxing the martingale condition. Further, we introduce two kinds of approximations: a deterministic one, $\bm^n$, and a stochastic one, $\hbm$. We investigate $\Wc^{\oplus}\big(\bm^n,\bm\big)$ and $\E\big[\Wc^{\oplus}\big(\hbm,\bm\big)\big]$ such that  the computation of suitable \textsf{LP} problems $\pp_{\eps_n}(\bm^n)$ and $\pp_{\eps_m}(\bm^{\hnm})$ can be carried out. In addition, we provide some numerical examples for illustration. 

Our second main result, Theorem \ref{thm:convergence_rate},  provides an estimation on the convergence rate for the one-dimensional case.  This result, in particular, allows us to deduce a complete scheme for calculating $\pp(\m,\n)$ to a given precision.

As a relatively immediate, but practically relevant, extension of our setup, for the computation of $\pp(\vec{\bm})$ defined in \eqref{def:mmot}, Theorem \ref{thm:general} can be easily extended to show $\lim_{n\to\infty} \pp_{\eps_n}(\vec{\bm}^n)= \pp(\vec{\bm})$, where the sequence $(\eps_n)_{n\ge 1}$ converges to zero and satisfies 
\b*
\eps_n ~\ge ~ \sum_{k=1}^N\sum_{i=1}^d\Wc\big(\m^n_{k,i},\m_{k,i}\big),~ \mbox{ for all } n\ge 1.
\e*
Further investigation of this setup, which is of practical relevance, is an ongoing work.

Last, but not least, we point out that solving efficiently  the \textsf{LP} problems $\pp_{\eps_n}(\bm^n)$ and $\pp_{\eps_m}(\bm^{\hnm})$ is also an interesting avenue of research and may attract the attention from practitioners. We notice that $\pp_{\eps_n}(\bm^n)$ and $\pp_{\eps_m}(\bm^{\hnm})$ are in fact  \textsf{LP} problems with a particular structure, i.e. the constraints are given by a sparse matrix, and some existing algorithms can be extended to their setup: 

\begin{itemize}
\item If $N=2$ and $d=1$, the iterative Bregman projection in \cite{BCCNP} can be applied to solve $\pp_{\eps_n}(\m^n,\n^n)$ with an additional entropic regularization.
\item If $N=2$, the stochastic averaged gradient approach, see e.g. Genevay et al. \cite{GCPB}, may deal with $\pp_{\eps_n}(\m^n,\n^n)=\dd_{\eps_n}(\m^n,\n^n)$ by the duality. 
\end{itemize}  

We believe that extending the above algorithms  to multiple steps and  higher dimensions is an important and challenging problem. 

\appendix
\section{Supplementary proofs}
\label{sec:append}

\begin{proof}[Proof of Theorem \ref{thm:duality}]
The existence of $\P^*$ is a consequence the compactness of $\Mc_{\eps}(\bm)$. As for the duality, 
we prove a slightly stronger result. Let $\overline \Hc\subset \Hc$ be the subset of $H=(H_k)_{1\le k\le N-1}$ such that $H_k\in \Cc_b(\Om^k;\R^d)$ for $k=1,\ldots, N-1$. Define the minimization problem:
\b*
\overline \dd_{\eps}(\bm) ~:=~ \inf_{(H,\ps)\in\overline\Dc_{\eps}}~ \left[\sum_{k=1}^N\int_{\R^d} \ps_k(\bx) \m_k(d\bx)\right],~ \mbox{ where } \overline \Dc_{\eps}:=\Dc_{\eps} \cap \big(\overline \Hc\times \Lambda^N\big).
\e*
Then, by definition, $\pp_{\eps}(\bm)\le \dd_{\eps}(\bm)\le \overline \dd_{\eps}(\bm)$. Define the function $\Phi: \Pc(\bm)\times \overline\Hc \to \R$ by
\b*
\Phi(\P, H) ~:=~ \E_{\P}\left[c(\bS_1,\ldots, \bS_N) -\sum_{k=1}^{N-1} H_k(\bS_1,\ldots, \bS_k)\cdot (\bS_{k+1}-\bS_k)\right] +  \eps\sum_{k=1}^{N-1} \|H_k\|_{\infty}.
\e*
Since $\Phi(\cdot,H)$ is continuous and concave for all $H\in\overline\Hc$ and $\Phi(\P,\cdot)$ is continuous and convex for all $\P\in\Pc(\bm)$, then it holds that $
\sup_{\P\in\Pc(\bm)} \inf_{H\in\overline \Hc} \Phi(\P,H) =  \inf_{H\in\overline \Hc} \sup_{\P\in\Pc(\bm)}  \Phi(\P,H)$ in view of the Min-Max theorem as $\Pc(\bm)$ is convex and compact. Hence, 
\b*
&&\overline \dd_{\eps}(\bm) \\
&=& \inf_{H\in\overline\Hc}~ \inf_{\psi\in\Lambda^N:~  \sum_{k=1}^N \psi(\bx_k) \ge c(\bx_1,\ldots, \bx_N) - \sum_{k=1}^{N-1}\big(H_k(\bx_1,\ldots \bx_k)\cdot (\bx_{k+1}-\bx_k)-\eps\|H_k\|_{\infty}\big)}~ \sum_{k=1}^N \int_{\R^d} \ps_k(\bx) \m_k(d\bx)  \\
&=&  \inf_{H\in\overline\Hc} \sup_{\P\in\Pc(\bm)}~ \Phi(\P, H) ~=~ \sup_{\P\in\Pc(\bm)} \inf_{H\in\overline \Hc}~ \Phi(\P, H) \\
&=& \sup_{\P\in\Mc_{\eps}(\bm)} \inf_{H\in\overline \Hc}   ~\Phi(\P, H) ~\le~ \sup_{\P\in\Mc_{\eps}(\bm)}~ \E_{\P}\big[c(\bS_1,\ldots, \bS_N)\big] ~=~ \pp_{\eps}(\bm),
\e*
where the second equality follows from the classical duality of Kantorovich, and the fourth equality is by the fact $\inf_{H\in\overline \Hc} \Phi(\P, H) =-\infty$ once $\P\notin\Mc_{\eps}(\bm)$.
\end{proof}

\begin{proof}[Proof of Proposition \ref{prop:sto_disc}]
As $\Wc^{\oplus}\big(\hbm,\bm\big) =\sum_{k=1}^N \Wc\big(\hm_k,\m_k\big)$ with every $\m_k$ satisfying Assumption \ref{ass:mom}, it suffices to deal with $\Wc(\hm_1,\m_1)$. For notational simplicity we write $\hm\equiv \hm_1$ and $\m\equiv \m_1$. In the rest of the proof, we refer to \cite{FG}. Combining Lemmas 5 and 6 together with the proof of Theorem 1, it holds that 
\b*
\E\big[\Wc(\hm,\m)\big] 
~\le ~ 24(M_{\theta}+1)d^{(1-\theta)/2}2^{\theta} \sum_{i\ge 0} 2^{i} \sum_{j\ge 0} 2^{-j} \min \left( \eps_i, 2^{dj/2} (\eps_i/n)^{1/2} \right),~ \mbox{ with } \eps_i := 2^{-\theta i}.
\e*
For every $\eps\in (0,1)$, it follows by a straightforward computation that
\b*
\sum_{j\ge 0} 2^{-j} \min \left( \eps, 2^{dj/2} (\eps/n)^{1/2} \right)
  ~\le ~ \frac{9}{2\log 2} \begin{cases}
\min\left(\eps, (\eps/n)^{1/2}\right) &   \mbox{if }  d=1, \\
\min\left(\eps, (\eps/n)^{1/2}\log(2+\eps n)\right)  &   \mbox{if }  d=2, \\ 
\min\left(\eps, \eps(\eps n)^{-1/d}\right)  &  \mbox{if }  d>2.
\end{cases}
\e*
Next we substitute $\eps$ by $\eps_i$ and distinguish different $d$. If $d=1$, then
\b*
\sum_{i\ge 0} 2^{i} \min\left(\eps_i, (\eps_i/n)^{1/2}\right) ~\le~  \begin{cases}
2\sqrt{2}n^{1/\theta-1} \big/\big((2^{1-\theta/2}-1)(1-2^{1-\theta})\big) &  \mbox{if }  \theta<2, \\
4(1+\log n) n^{-1/2}  &   \mbox{if }  \theta=2, \\ 
n^{-1/2} \big/ \big(1-2^{1-\theta/2}\big) &  \mbox{if }  \theta>2.
\end{cases}
\e*
If $d=2$, then
\b*
\sum_{i\ge 0} 2^{i} \min\left(\eps_i, (\eps_i/n)^{1/2}\log(2+\eps_i n)\right)  ~\le~   \begin{cases}
7n^{1/\theta-1} \big/ (2^{1-\theta/2}-1)^2 &   \mbox{if }  \theta<2, \\
6\big(1+(\log n)^2\big) n^{-1/2}  & \mbox{if }  \theta=2, \\ 
(1+\log n)n^{-1/2} \big/ \big(1-2^{1-\theta/2}\big)  & \mbox{if }  \theta>2.
\end{cases}
\e*
If $d>2$, then
\b*
\sum_{i\ge 0} 2^{i} \min\left(\eps_i, \eps_i(\eps_i n)^{-1/d}\right)   ~\le~   \begin{cases}
3n^{1/\theta -1}\big/ \big((2^{1-\theta(1-1/d)}-1)(1-2^{1-\theta})\big)  &  \mbox{if }  \theta<d/(d-1), \\
6(1+\log n) n^{-1/d}  &   \mbox{if }  \theta=d/(d-1), \\ 
n^{-1/d} \big / \big(1-2^{1-\theta(1-1/d)}\big)   &  \mbox{if }  \theta>d/(d-1).
\end{cases}
\e*
The proof is completed with $C(\theta, d)$ being the product of the corresponding coefficients.
\end{proof}

\begin{lemma}\label{lem:Skorokhod}
With the same conditions and notations of Proposition  \ref{prop:approx}, there exist measurable functions $f_1,\ldots, f_N: \Om^2\to\R^d$ such that $\Qc \circ (X_k, Y_k)^{-1} = \P_k$, where $ Y_k:= f_k(X_k, Z_k)
$ for $k=1,\ldots, N$.
\end{lemma}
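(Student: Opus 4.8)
The plan is to realize, for each $k$, the conditional law of $\P_k$ given its first marginal $\m_k$ as the image of Lebesgue measure under a jointly measurable ``parametrized quantile transform'', the uniform randomness being extracted from the Gaussian variable $Z_k$.

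First I would reduce the $d$-dimensional Gaussian noise to a single uniform variable. Writing $Z_k=\big(Z_k^{(1)},\ldots,Z_k^{(d)}\big)$ and letting $\Phi$ denote the standard one-dimensional normal cdf, set $U_k:=\Phi\big(Z_k^{(1)}\big)$. Then $U_k$ is uniformly distributed on $[0,1]$; and since $U_k$ is a Borel function of $Z_k$, which is independent of $(X_1,\ldots,X_N)$ by hypothesis, $U_k$ is independent of $X_k$, whose law under $\Qc$ is $\m_k$. Next I would disintegrate: as $\R^d$ is Polish there is a Borel probability kernel $\bx\mapsto Q_k(\bx,\cdot)$ with $\P_k(d\bx,d\by)=\m_k(d\bx)\,Q_k(\bx,d\by)$.

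The core step is to build, for this kernel, a Borel map $g_k:\R^d\times[0,1]\to\R^d$ with $g_k(\bx,\cdot)_{\#}\mathrm{Leb}_{[0,1]}=Q_k(\bx,\cdot)$ for every $\bx$. Fix a Borel isomorphism $\iota:\R^d\to B$ onto a Borel set $B\subseteq\R$ (it exists, $\R^d$ and $\R$ being uncountable standard Borel spaces), let $G_{k,\bx}(t):=Q_k\big(\bx,\iota^{-1}((-\infty,t]\cap B)\big)$ be the cdf of $\iota_{\#}Q_k(\bx,\cdot)$, which is Borel in $(\bx,t)$, and take its generalized inverse $G_{k,\bx}^{-1}(u):=\inf\{t\in\R:\,G_{k,\bx}(t)\ge u\}$, which is then jointly Borel in $(\bx,u)$ and pushes $\mathrm{Leb}_{[0,1]}$ forward to $\iota_{\#}Q_k(\bx,\cdot)$ for each $\bx$; set $g_k(\bx,u):=\iota^{-1}\big(G_{k,\bx}^{-1}(u)\big)$. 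Finally I would put $f_k(\bx,\bz):=g_k\big(\bx,\Phi(z_1)\big)$ for $\bz=(z_1,\ldots,z_d)$, so that $Y_k=f_k(X_k,Z_k)=g_k(X_k,U_k)$, and check that for every bounded Borel $h:\R^d\times\R^d\to\R$, by the independence of $X_k$ and $U_k$ together with Fubini,
\begin{align*}
\E_{\Qc}\big[h(X_k,Y_k)\big] &~=~ \int_{\R^d}\!\int_0^1 h\big(\bx,g_k(\bx,u)\big)\,du\,\m_k(d\bx) \\
&~=~ \int_{\R^d}\!\int_{\R^d} h(\bx,\by)\,Q_k(\bx,d\by)\,\m_k(d\bx) ~=~ \int h\,d\P_k,
\end{align*}
so that $\Qc\circ(X_k,Y_k)^{-1}=\P_k$ and, reading off the second marginal, $\Qc\circ Y_k^{-1}=\n_k$.

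The only genuinely delicate point is the joint Borel measurability of the parametrized quantile map $(\bx,u)\mapsto G_{k,\bx}^{-1}(u)$, which is a classical measure-theoretic fact (the generalized inverse of a jointly measurable family of right-continuous cdfs is jointly measurable). Alternatively, one can sidestep the explicit construction entirely and invoke the transfer theorem for Borel spaces (e.g.\ Kallenberg, \emph{Foundations of Modern Probability}, Thm.~6.10), applied with the coupling $\P_k$ of $(\m_k,\n_k)$, the variable $X_k\sim\m_k$, and the independent $\mathrm{Unif}[0,1]$ source $U_k$, to produce the desired $f_k$ directly.
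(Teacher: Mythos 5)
Your proof is correct, but it takes a genuinely different route from the paper's in the multidimensional case. The paper also starts from the disintegration $\P_k(d\bx,d\by)=\m_k(d\bx)\otimes\lambda_{\bx}(d\by)$ and, for $d=1$, uses exactly your quantile transform $f(x,z)=G_x^{-1}\circ F(z)$; for $d>1$, however, it constructs $f_k$ by a Knothe--Rosenblatt-type recursion, disintegrating $\lambda_{\bx}$ coordinate by coordinate and applying the one-dimensional inverse-cdf step at each stage, thereby using all $d$ coordinates of $Z_k$ and the fact that the Gaussian has a density. You instead compress the noise into a single uniform $U_k=\Phi\big(Z_k^{(1)}\big)$ and transport it to the conditional law in one stroke, by pushing the kernel to the real line through a Borel isomorphism $\iota:\R^d\to B\subset\R$ and taking a jointly measurable parametrized quantile -- or, even more directly, by citing the transfer theorem. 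Your argument is shorter and more flexible (it only needs one continuously distributed coordinate of the noise, and the joint measurability of the parametrized quantile is the classical fact you identify), at the price of the more abstract Borel-isomorphism/transfer machinery; the paper's recursion is more elementary and explicit but longer. One small point to tidy in your construction: $\iota^{-1}$ is defined only on $B$, and $G_{k,\bx}^{-1}(u)$ lies in $B$ only for Lebesgue-a.e.\ $u$ (because the pushforward law $\iota_{\#}Q_k(\bx,\cdot)$ gives full mass to $B$), so you should extend $\iota^{-1}$ measurably off $B$, say by a constant value; this changes nothing in the Fubini computation, where only the a.e.\ behaviour in $u$ enters.
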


\begin{proof}
Without loss of generality, we only prove for $k=1$. Further, we drop the subscript without any danger of confusion, i.e. $X\equiv X_1$, $Z\equiv Z_1$, $\m\equiv \m_1$, $\P\equiv \P_1$, etc. Disintegrating with respect to $\m$, one has $\P(d\bx,d\by)=\m(d\bx)\otimes \lambda_{\bx}(d\by)$, where $\big(\lambda_{\bx}(d\by)\big)_{\bx\in\R^d}$ denotes the regular conditional probability distribution (r.c.p.d.). Hence, the above  claim is equivalent to the existence of a measurable function $f: \Om^2\to\R^d$ satisfying, for $\m$ - a.e. $\bx\in\R^d$
\b*
\Qc\big[f(\bx,Z)\in A\big| X=\bx\big]~=~ \lambda_{\bx}(A),~ \mbox{ for all } A\in\Bc(\R^d),
\e*
or namely, $f(\bx,\cdot)$ transfers the law of $Z$ to $\lambda_{\bx}$ for $\m$ - a.e. $\bx\in\R^d$. We first prove this claim for the case of $d=1$, i.e. $\bx=x$ and then conclude for the general case.

\vspace{1mm}

\no \rmi Let $F$ and $G_x$ be respectively the cumulative distribution functions of $Z$ and $\lambda_x$, and define the right-continuous inverse by $G_x^{-1}(t) := \inf \big\{y\in\R: G_x(y)>t \big\}$. Define further $f:\R^2\to\R$ by $f(x,y):=G_x^{-1}\circ F(y)$, then $f$ is  measurable by the definition of r.c.p.d. Moreover, it follows by Villani \cite{Villani}, Pages 19-20, that for $\m$ - a.e. $x\in\R$ one has $\Qc\big[Y\in A\big| X=x\big]= \lambda_x(A)$ for all $A\in\Bc(\R)$, 
which concludes the proof.

\vspace{1mm}

\no \rmii Now let us treat the general case. Recall that $\bx=(x_1,\ldots, x_d)$,  $\by=(y_1,\ldots, y_d)$ and $\bz=(z_1,\ldots, z_d)$. We proceed as follows.

\vspace{1mm}

\emph{Step 1}: Take the  marginal distributions on the first coordinate for $Z$ and $\lambda_{\bx}$, denoted by $F_1(z_1)dz_1$ and $\lambda_{\bx}^1(dy_1)$, where we note that $Z$ admits a density function on $\R^d$. Then repeat the the procedure of {\rm (i)}, and construct the measurable  map $f_1(\bx,\cdot)$ which may transfers the law $F_1(z_1)dz_1$ to the other one $\lambda_{\bx}^1(dy_1)$.

\vspace{1mm}

\emph{Step 2}: Next take the marginals on the first two coordinates for $Z$ and $\lambda_{\bx}$, $F_2(z_1,z_2)dz_1dz_2$ and $\lambda_{\bx}^2(dy_1,dy_2)$, and disintegrate them with respect to the first one. This yields 
\b*
F_2(z_1,z_2)dz_1dz_2~:=~F_1(z_1)dz_1\otimes F_{z_1,2}(z_2)dz_2 &\mbox{and} & \lambda_{\bx}^2(dy_1, dy_2)~:=~\lambda_{\bx}^1(dy_1)\otimes \lambda^2_{\bx,y_1}(dy_2).
\e*
For each $z_1$, set $y_1=f_1(\bx,z_1)$, and define $f_{1,2}(\bx, z_1, \cdot)$ according to {\rm (i)},  which transfers thus $F_{z_1,2}(z_2)dz_2$ to $\lambda^2_{\bx,f_1(\bx,z_1)}(dy_2)$.

\vspace{1mm}

\emph{Step 3}: We repeat the construction of \emph{Step 2} by adding coordinates one after the other and defining $f_{1,2,3}(\bx,z_1,z_2, \cdot)$,  etc. After $N$ steps, this produces the required map $f(\bx,z)$ which transports the law of $Z$ to $\lambda_{\bx}$. 
\end{proof}



\end{document}